\theoremstyle{plain}
\newtheorem*{thm}{Main Theorem}
\newtheorem*{lem}{Main Lemma}
\newtheorem{theorem}{Theorem}[section]
\newtheorem{proposition}[theorem]{Proposition}
\newtheorem{lemma}[theorem]{Lemma}
\theoremstyle{definition}
\newtheorem{definition}[theorem]{Definition}
\newtheorem{example}[theorem]{Example}
\begin{document}
\title{Boundedness of Hyperbolic Components of Newton Maps}
\author{Hongming Nie and Kevin M. Pilgrim}
\address{Department of Mathematics, Indiana University}
 \email{nieh@indiana.edu\\
 pilgrim@indiana.edu}
\date{\today}
\maketitle

\begin{abstract}
We investigate boundedness of hyperbolic components in the moduli space of  Newton maps. For quartic maps, (i) we prove hyperbolic components possessing two distinct attracting cycles each of period at least two are bounded, and (ii) 
we characterize the possible points on the boundary at infinity for some other types of hyperbolic components. For general maps, we prove hyperbolic components whose elements have fixed superattracting basins mapping by degree at least three are unbounded. 
\end{abstract}

\tableofcontents 

\section{Introduction}

In this paper, we study boundedness properties of hyperbolic components of Newton maps.

A rational map is \textit{hyperbolic} if every critical point converges to a (super)attracting cycle under iteration. Hyperbolicity is an open condition invariant under conjugacy.  Conjecturally hyperbolic maps are dense in moduli space, see \cite{Lyubich00, McMullen94}. A connected component of the set of the hyperbolic maps is called a \textit{hyperbolic component}.  Any two maps in the same hyperbolic component are quasiconformally conjugate on a neighborhood of their Julia sets \cite{Kameyama03}. For a hyperbolic component whose elements have connected Julia sets, there is up to M\"obius conjugacy a unique post-critically finite map in this component \cite{McMullen88}.  In terms of the ``dictionary'' between rational maps and Kleinian groups, hyperbolic components are analogous to components of the set of convex compact discrete faithful representations of a fixed group into $\mathrm{PSL}_2(\mathbb{C})$; see \cite{McMullen88, Pilgrim94}. For more properties of hyperbolic components, we refer \cite{Milnor86, Rees90}.\par

Our general focus is on hyperbolic components in algebraic subfamilies $\mathcal{F}$ of the \emph{moduli space} $\mathrm{rat}_d$ of rational maps of degree $d$, which is the quotient of the parameter space $\mathrm{Rat}_d \subset \mathbb{P}^{2d+1}$ of all rational maps of degree $d$ under the conjugation action by $\mathrm{Aut}(\mathbb{P}^1)$. If $\mathcal{H} \subset \mathcal{F} \subset \mathrm{rat}_d$ is a component of hyperbolic maps in $\mathcal{F}$, we say it is \emph{bounded}  in $\mathcal{F}$ if the closure $\overline{\mathcal{H}}\subset \mathrm{rat}_d$ is compact.
\par

The simplest examples of hyperbolic components arise in the quadratic polynomial family $P_c(z)=z^2+c$. In this case, there two kinds of components. The unbounded \emph{escape locus} is the complement of the Mandelbrot set, and has fractal boundary. The bounded components are uniformized by the multiplier $\lambda$ of the unique attracting cycle \cite{Douady84}, so as subsets of the plane they are semi-algebraic, defined as a component of the real-algebraic inequality $|\lambda|<1$. More general structural results regarding hyperbolic components in the spaces of higher degrees polynomials are given in \cite{Milnor12, Roesch07, Wang17}.  \par

\vskip 0.1in

\noindent{\bf Boundedness results.}   For quadratic rational maps, in \cite{Rees90} Rees divided the hyperbolic components  into $4$ types (see also \cite{Milnor93}), and showed boundedness of certain one real dimension loci in hyperbolic components of certain types. Epstein \cite{Epstein00} provided the first general compactness result for hyperbolic components in moduli space of rational maps.  Following Rees' classification, we say a hyperbolic component $\mathcal{H} \subset \mathrm{rat}_2$ is \emph{type D} if its maps possess two distinct attracting cycles. 
Epstein showed that if $\mathcal{H}$ is of type D and the periods of the attractors are each at least two, then $\mathcal{H}$ is bounded in $\mathrm{rat}_2$.  Epstein's argument relies crucially on two facts: (i) a sequence in the moduli space of rational maps is bounded if and only if the multipliers at fixed-points remain bounded, and (ii) there are two distinct cycles of periods at least two whose multipliers remain bounded.  In general degrees, and using very different methods inspired by Cui's extremal length control of distortion technique, Haissinsky and Tan \cite{Haissinsky04} showed a boundedness result similar to Rees' \emph{op. cit.} for  certain simple pinching deformations in wide generality. In \cite{McMullen94c} McMullen conjectured that if a hyperbolic rational map has Julia set homeomorphic to the Sierpinski carpet, then the corresponding hyperbolic component is bounded. \par
\vskip 0.1in

\noindent{\bf Unboundedness results.}  On the other side, Petersen \cite{Petersen99} related the failure of mating to unboundedness of hyperbolic components. Makienko \cite{Makienko00} gave sufficient conditions for the unboundedness of hyperbolic components, also see \cite{Tan02}. These use so-called pinching deformations; cf. \cite{Pilgrim94}. \par
\vskip 0.1in

\noindent{\bf Newton maps.}  Our main results concern boundedness properties of hyperbolic components in the algebraic family of Newton maps. Recall that for a polynomial $P(z)\in\mathbb{C}[z]$ of degree $d\ge 2$ with distinct roots $\{r_1, \ldots, r_d\}$, its Newton map is defined by 
$$N_P(z)=N_{\{r_1, \ldots, r_d\}}(z):=z-\frac{P(z)}{P'(z)}.$$ 
We denote by $\mathrm{NM}_d \subset \mathrm{Rat}_d$ the subspace of Newton maps and by $\mathrm{nm}_d$ the moduli space of affine conjugacy classes of Newton maps.  Since $\mathrm{nm}_d$ is closed in $\mathrm{rat}_d$, a sequence in $\mathrm{nm}_d$ tends to infinity in $\mathrm{nm}_d$ if and only if it tends to infinity in $\mathrm{rat}_d$. \par

Since 
$$N_P'(z)=\frac{P(z)P''(z)}{P'(z)^2},$$
a critical point of $N_P(z)$ is either a zero of $P(z)$, a zero of $P''(z)$ or both. We say a critical point of $N_P$ is \textit{additional} if it is a zero of $P''(z)$, and we say it is \textit{free} if it is not fixed. Note a free critical point is additional.  Thus, a Newton map $N_P(z)$ is hyperbolic if and only if each free critical point is attracted to some (super)attracting cycle.  \par

The space $\mathrm{nm}_2$ is a singleton, a fact known to Cayley \cite{Cayley79}; there are no additional critical points. The space $\mathrm{nm}_3$ is isomorphic to $\mathbb{C}$; there is one additional critical point; there is a unique unbounded hyperbolic component and all other components are bounded with Jordan curves boundaries, see \cite{Roesch15}. In general, the space $\mathrm{nm}_d$ has dimension $d-2$; there are $d-2$ additional critical points. 

\subsection{Main result}
Our first main result is the following theorem, which is an analogue of Epstein's boundedness result for quadratic rational maps. To set up the statement, we say a hyperbolic component $\mathcal{H}\subset \mathrm{nm}_4$ is \emph{type D} if there are two distinct (super)attracting cycles each of period at least two; each then attracts a free critical point. We call these cycles the \emph{free (super)attracting cycles} of elements of $\mathcal{H}$. 

\begin{thm}\label{Thm-bounded}
A hyperbolic component $\mathcal{H}\subset \mathrm{nm}_4$ of type D is bounded in $\mathrm{nm}_4$. 
\end{thm}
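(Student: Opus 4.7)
The plan is to argue by contradiction via a rescaling analysis. Suppose $\mathcal{H}$ were unbounded, and pick a sequence $[N_k] \in \mathcal{H}$ leaving every compact set of $\mathrm{nm}_4$. Choose affine-normalized representatives $N_k = N_{P_k}$ with $P_k(z) = z(z-1)(z-a_k)(z-b_k)$. After passing to a subsequence, the root configuration $\{0,1,a_k,b_k\}$ degenerates according to a combinatorial pattern encoded by a rooted tree $T$ whose leaves are the four roots and whose internal nodes record the relative scales at which groups of roots coalesce. For each internal node $v$ of $T$ with $k(v) \geq 2$ children of sub-cluster sizes $n_1, \ldots, n_{k(v)}$, an appropriate affine rescaling of $N_k$ yields a rescaling limit $N_v$: this is the Newton map of $\prod_j (w-c_j)^{n_j}$, where $c_j$ are the rescaled sub-cluster positions. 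A direct computation shows that $N_v$ is a rational map of degree $k(v)$, each $c_j$ is an attracting fixed point of multiplier $1 - 1/n_j$, the point $\infty$ is a repelling fixed point of multiplier $(\sum_j n_j)/(\sum_j n_j - 1) > 1$, and $N_v$ carries exactly $k(v) - 2$ free critical points beyond those needed by Fatou's theorem to feed the attracting fixed points $c_j$ with $n_j \geq 2$.

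Label the two attracting cycles of $N_k$ by $\mathcal{C}_j^k$, $j=1,2$, of periods $p_j \geq 2$ and multipliers $\lambda_j^k$ satisfying $|\lambda_j^k| \leq 1$. The key claim is that each $\mathcal{C}_j^k$ is \emph{hosted} at some internal node $v$ of $T$, meaning $\mathcal{C}_j^k$ converges (after rescaling at $v$, along a subsequence) to a genuine period-$p_j$ attracting cycle of $N_v$. Otherwise $\mathcal{C}_j^k$ collapses in the rescaling at $v$ onto a single point $z_*$, and $\lambda_j^k \to N_v'(z_*)^{p_j}$. Collapse to $z_* = \infty$ gives a limiting value strictly larger than $1$, contradicting $|\lambda_j^k| \leq 1$. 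Collapse onto an attracting fixed point $c_j$ of $N_v$ is only apparent: the cycle must then be visible at the finer rescaling associated to the child subtree at $c_j$, and we descend through $T$ until a genuine hosting is found. A hosted period-$\geq 2$ cycle captures a free critical point of $N_v$ (by Fatou), so at most $k(v) - 2$ such cycles can be hosted at $v$.

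Enumerating rooted trees with four leaves and nontrivial degeneration, and summing the capacities $k(v) - 2$ over internal nodes, yields: the $3+1$ split (root of valence $2$ with a 3-subtree of valence $3$) and the $2+1+1$ split (root of valence $3$ containing a 2-subtree) each have total capacity $1$; the $2+2$ split and the nested $3+1$ with internal $2+1$ refinement each have total capacity $0$; all other splits reduce to these. In every case the total capacity is at most $1 < 2$, contradicting the existence of the two period-$\geq 2$ attracting cycles $\mathcal{C}_1^k, \mathcal{C}_2^k$.

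The principal obstacle is the rigorous justification of the hosting claim: one must show that the rescaling limits together capture \emph{every} period-$\geq 2$ attracting cycle of $N_k$, with multipliers and periods passing to the limit, and rule out the pathological case of a cycle whose orbit straddles multiple scales of the degeneration without being hosted at any single node. Combining the multiplier bounds $|\lambda_j^k| \leq 1$, the fact that the multiplier at $\infty$ exceeds $1$ at every rescaling node, and normal-family arguments for the rescaled maps should suffice; making this precise is the main technical work of the proof.
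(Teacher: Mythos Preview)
Your rescaling-tree framework parallels the paper's Berkovich approach (your internal nodes $v$ correspond to the type~II repelling fixed points in $V_{\mathrm{rep}}$, and your rescaling limits $N_v$ are the reductions $\mathrm{red}(\mathbf{M}^{-1}\circ\mathbf{N}\circ\mathbf{M})$ at those points), and your capacity enumeration matches the paper's types 1, 2, 3a, 3b. But the gap you flag is not a technicality to be patched with multiplier bounds and normal families; it is the core of the proof. The straddling case---a cycle $\langle z_k\rangle$ with some points converging to a hole $c_j$ of $N_v$ and others converging elsewhere---genuinely occurs and cannot be ruled out by descent, because at the child node those ``elsewhere'' points now go to $\infty$, so the cycle straddles again at every level. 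The paper's resolution (Proposition~\ref{Fatou-size}) is different in kind: in the straddling case the immediate basin component at the hole shrinks to the hole, forcing the free critical point of $N_k$ in that basin to have reduction landing \emph{exactly} on the hole after finitely many iterates of $\widehat N$. This is then contradicted by a direct computation (Proposition~\ref{Berk-dyn} and Table~\ref{table free}) showing that for each degeneration type some additional critical point has reduction with infinite forward orbit under $\widehat N$---attracted to, but never landing on, the hole. Multiplier bounds alone cannot produce the ``lands exactly on the hole'' conclusion; an estimate on Fatou component diameters is essential.

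Two further points your outline misses. First, even when both cycles are hosted at a single node $v$, their limits may coincide as a single parabolic cycle of $N_v$; your Fatou-based count then undercounts, and the paper invokes Epstein's refined inequality $\gamma\le\delta$ (Proposition~\ref{number}, via Lemmas~\ref{par} and~\ref{par-att-ind}) to extract the contradiction in Proposition~\ref{intersection}. Second, the paper does not work with raw sequences: it first uses Milnor's semi-algebraicity of type~D components together with the Curve Selection Lemma (Lemma~\ref{lemma:semialg} and the proof of the Main Theorem) to reduce to a holomorphic one-parameter family, which is what makes the Berkovich machinery over $\mathbb{L}$ available. A purely sequential argument would have to reproduce the Fatou-size and critical-orbit constraints without this reduction.
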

In the proof of the Main Theorem, instead of degenerating sequences in Epstein's argument \cite{Epstein00},  we study degenerating holomorphic (one-complex-dimensional) families: holomorphic maps $\mathbb{D}\to \overline{\mathrm{Rat}}_d$ given by $t\to f_t$ with $f_t\in\mathrm{Rat}_d$ if $t\not=0$. The fact that we can reduce sequences to such families comes from the semi-algebraicity of the component $\mathcal{H}$. In any algebraic family, Milnor announced that a hyperbolic component is semi-algebraic if it possesses the maximal number of distinct (super)attracting cycles. For a more general version of his result, we refer \cite{Milnor14}. In our setting, if $\mathcal{H}$ is type D, then $\mathcal{H}$ is semi-algebraic and hence semi-analytic. Our Main Theorem is a consequence of the following Main Lemma.

\begin{lem}\label{main}
Suppose $\mathcal{H}\subset \mathrm{nm}_4$ is of type D.  Suppose $\{N_t\}$ is a holomorphic family of quartic Newton maps with the property that there exists a sequence $\{t_k\}$ with $t_k\to 0$ as $k\to\infty$ such that $[N_{t_k}]\in\mathcal{H}$. Then $[N_t]$ is bounded, as $t \to 0$, in $\mathrm{nm}_4$.
\end{lem}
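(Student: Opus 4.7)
I would argue by contradiction: assume $[N_t]$ is unbounded in $\mathrm{nm}_4$ as $t\to 0$, and show this is incompatible with $[N_{t_k}] \in \mathcal{H}$ for any sequence $t_k \to 0$. Write $N_t = N_{P_t}$ with $P_t(z) = \prod_{i=1}^{4}(z-r_i(t))$. After passing to a branched cover $t = s^n$ and conjugating by a holomorphic family of affine transformations, I may assume each root $r_i(s)$ admits a Puiseux expansion around $s = 0$ under a chosen normalization (say, two roots pinned at $0$ and $1$). The unboundedness hypothesis is then the statement that, in every such normalization, the four roots fail to remain bounded and pairwise separated.

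Next I would classify the degenerations of the 4-tuple of roots modulo the affine group. Each pattern is captured by a rooted tree recording nested clusters of roots and their relative scales in $s$ (two roots collide, three collide, two pairs collide at distinct scales, and so on). For each cluster $C$, I would introduce rescaling coordinates $\phi_t^C$ that normalize $C$, producing a rescaling limit $M^C := \lim_{t\to 0} \phi_t^C \circ N_t \circ (\phi_t^C)^{-1}$. This limit is either a Newton map of strictly smaller degree, whose roots are the normalized limits of $C$, or a simpler rational map obtained by collision of roots.

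The heart of the argument is to track the two type-D attracting cycles through the degeneration. By the implicit function theorem applied to the cycle equation, each attracting cycle extends holomorphically along the family as long as its multiplier remains in $\mathbb{D}$, which holds near $t=0$ because of the sequence $\{t_k\} \subset \mathcal{H}$. As $t \to 0$, a cycle's periodic points either remain bounded in some rescaling chart $\phi_t^C$ and descend to an attracting cycle of period $\geq 2$ in $M^C$ that attracts a (limit of a) free critical point, or else they escape in that chart and re-appear in a finer one. Thus each of the two type-D cycles is visible as a non-fixed attracting cycle in some rescaling limit. A counting argument now applies: cubic Newton maps have only one additional critical point (hence carry at most one non-root attracting cycle) and quadratic Newton maps have none, while the combinatorics of the tree constrain how cycles can be distributed among the rescaling limits. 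A finite case analysis on the possible trees should then show that two independent non-root attracting cycles of period $\geq 2$ cannot be simultaneously accommodated, yielding a contradiction.

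The main obstacle will be the classification-and-rescaling step: enumerating the degeneration patterns of four points modulo affine equivalence, computing the rescaling limits and their dynamics, and handling borderline cases in which a cycle lies near the interface between two scales or the rescaling limit is itself dynamically degenerate. This requires careful bookkeeping with Puiseux valuations and close use of Newton-map structure — each root being a superattracting fixed point whose immediate basin contains its critical point restricts how non-root cycles can migrate during degeneration. Once this analysis is in place, type D becomes incompatible with unboundedness and the Main Lemma follows.
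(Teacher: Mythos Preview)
Your overall architecture---contradiction, classification of degenerations by a tree/cluster structure, rescaling limits, and a counting argument---matches the paper's, and your tree classification is essentially the paper's types 1, 2, 3a, 3b. But the central claim that ``each of the two type-D cycles is visible as a non-fixed attracting cycle in some rescaling limit'' is where the argument breaks, and this is exactly where the paper does most of its work.

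The dichotomy you propose (a cycle either stabilizes in some chart as a period~$\ge 2$ attracting cycle, or escapes to a finer chart) is incomplete. A cycle can \emph{straddle} scales: some of its points collide with a hole of $N_0$---which is an attracting \emph{fixed} point of the rescaling limit $\widehat{N}$---while other points remain away from it. In that case the cycle does not descend to a nonfixed cycle in any single chart, and your count never sees it. The paper shows (Proposition~\ref{intersection}, via Epstein's refined Fatou--Shishikura inequality $\gamma\le\delta$) that at least one of the two limiting cycles \emph{must} meet a hole, so this straddling is not an edge case but the main situation to handle. To extract a contradiction from it, the paper invokes a genuinely analytic estimate (Proposition~\ref{Fatou-size}): when a cycle straddles the hole $0$, the immediate basin component near $0$ shrinks to the point $0$, which forces some iterate of the free critical point in that basin to land exactly on $0$. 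This then contradicts a Berkovich-dynamical constraint (Proposition~\ref{Berk-dyn}) asserting that the reduced critical orbit under $\widehat{N}$ is infinite and only limits on $0$. None of this mechanism is in your outline, and there is no soft substitute: the basin-shrinking estimate is precisely what converts ``the cycle visits the hole'' into ``a critical point is prefixed to the hole,'' which is the actual contradiction. A secondary gap: even when a cycle converges cleanly to a cycle of a rescaling limit, that limit cycle is a priori only nonrepelling, not attracting---it may be parabolic---and ruling out two cycles colliding to a parabolic one again needs the refined FSI (Lemmas~\ref{par}, \ref{par-att-ind} and Proposition~\ref{number}), not just a free-critical-point count.
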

\begin{proof}[Proof of the Main Theorem] 

We first lift from moduli to parameter space.  For distinct complex $r,s$ with $|r|\leq 1, |s|\leq 1$, and neither $r$ nor $s$ equal to $0$ or $1$, let $N_{(0,1,r,s)}$ denote the Newton map for the polynomial $z(z-1)(z-r)(z-s)$ with distinct and marked roots $\{0,1,r,s\}$, and let $\mathrm{nm}_4^\# \subset \overline{\mathbb{D}}\times\overline{\mathbb{D}}$ be the set of such Newton maps with marked roots. There is a natural surjective map $p: \mathrm{nm}_4^{\#} \twoheadrightarrow \mathrm{nm}_4$.  

Now suppose $\mathcal{H} \subset \mathrm{nm}_4$ is a hyperbolic component of type D, and let $\mathcal{H}^\#$ be a component of $p^{-1}(\mathcal{H})$.  A result of Milnor \cite{Milnor14} implies that $\mathcal{H}^\#$ is a semi-algebraic bounded domain in $\mathbb{C}^2$; see Lemma \ref{lemma:semialg} below.  

Now suppose $x$ is a boundary point of $\mathcal{H}^\#$ in $\overline{\mathbb{D}}\times \overline{\mathbb{D}}$.  By the semi-analytic version of the Curve Selection Lemma \cite{Lojasiewicz95} (see \cite{Denkowska15} for a survey),  there is an analytic function $\kappa:[0,1]\to\mathbb{R}^4=\mathbb{C}^2$ such that $\kappa((0,1])\subset\mathcal{H}^\#$ and $x=\kappa(0)\notin\mathcal{H}^\#$. Write $\kappa(t)=(\kappa_1(t),\kappa_2(t), \kappa_3(t), \kappa_4(t))$ for $t\in[0,1]$. Then $\kappa$ extends to a holomorphic function $\kappa:\mathbb{D}\to\mathbb{C}^2 \subset \overline{\mathrm{Rat}}_4$, sending $t\in\mathbb{D}$ to $(\kappa_1(t)+i\kappa_2(t), \kappa_3(t)+i\kappa_4(t))$. We obtain a holomorphic family $N_t$ of Newton maps for the polynomials $z(z-1)(z-(\kappa_1(t)+i\kappa_2(t)))(z-(\kappa_3(t)+i\kappa_4(t)))$. 

The hypotheses of the Main Lemma are satisfied, so $[N_t]$ is bounded, as $t\to 0$, in $\mathrm{nm}_4$. It follows that $\mathcal{H}$ is bounded in $\mathrm{nm}_4$. 
\end{proof}
\begin{figure}[h!]
  \includegraphics[width=.75\linewidth]{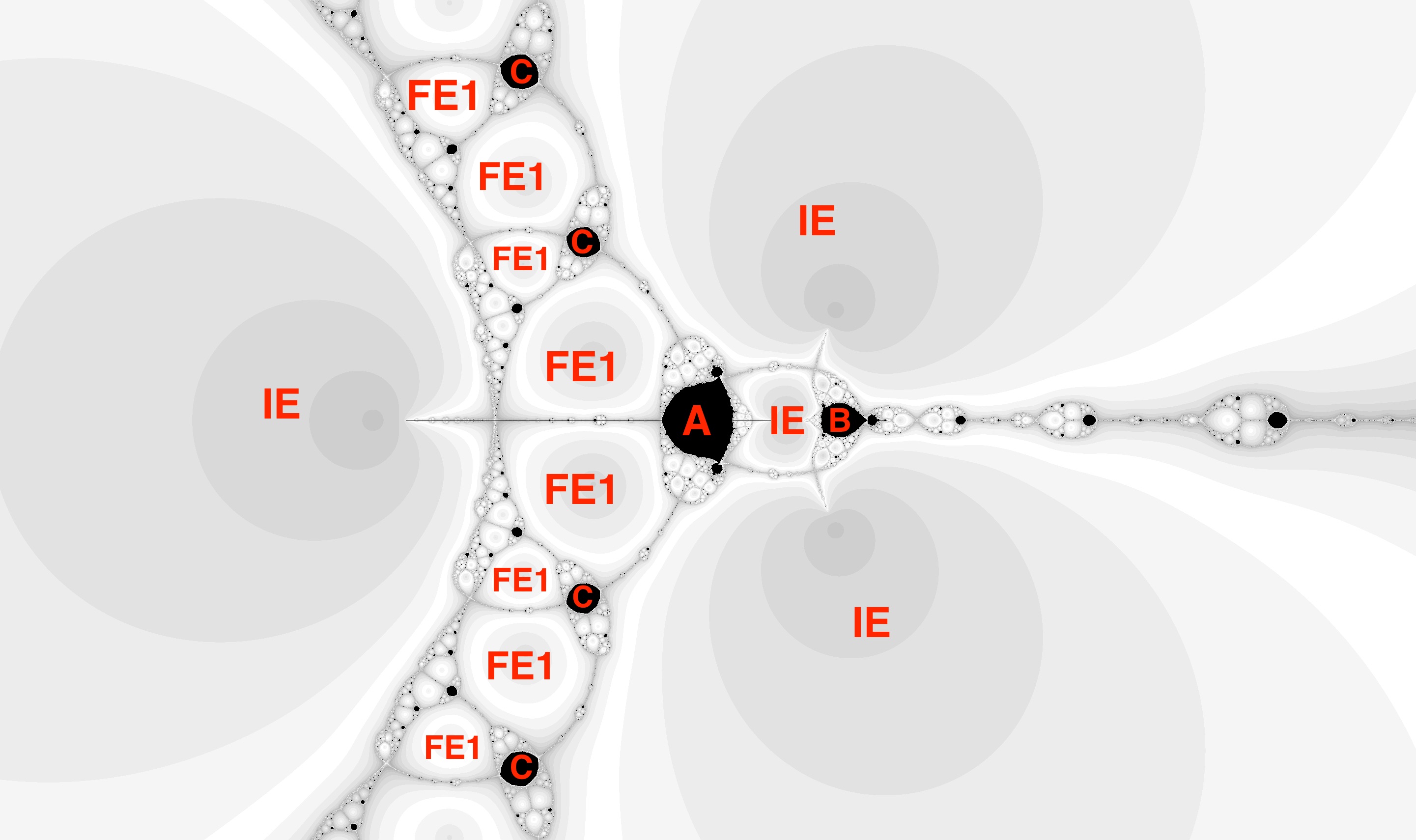}
  \caption{The locus $\mathrm{Per_2(0)}\cap\mathrm{nm}_4$, showing part of $c$-plane for the family of Newton maps for the polynomial $z^4/12-cz^3/6+(4c-3)z/12+(3-4c)/12$, with additional critical points $0$ and $c$. The periodic critical orbit is $0\to 1\to 0$. The letters indicate the types of hyperbolic components, see section \ref{type}. This and subsequent images were created using ``FractalStream", see \cite{FractalStream}.}
  \label{per_2}
\end{figure}
\begin{figure}[h!]
 \includegraphics[width=.75\linewidth]{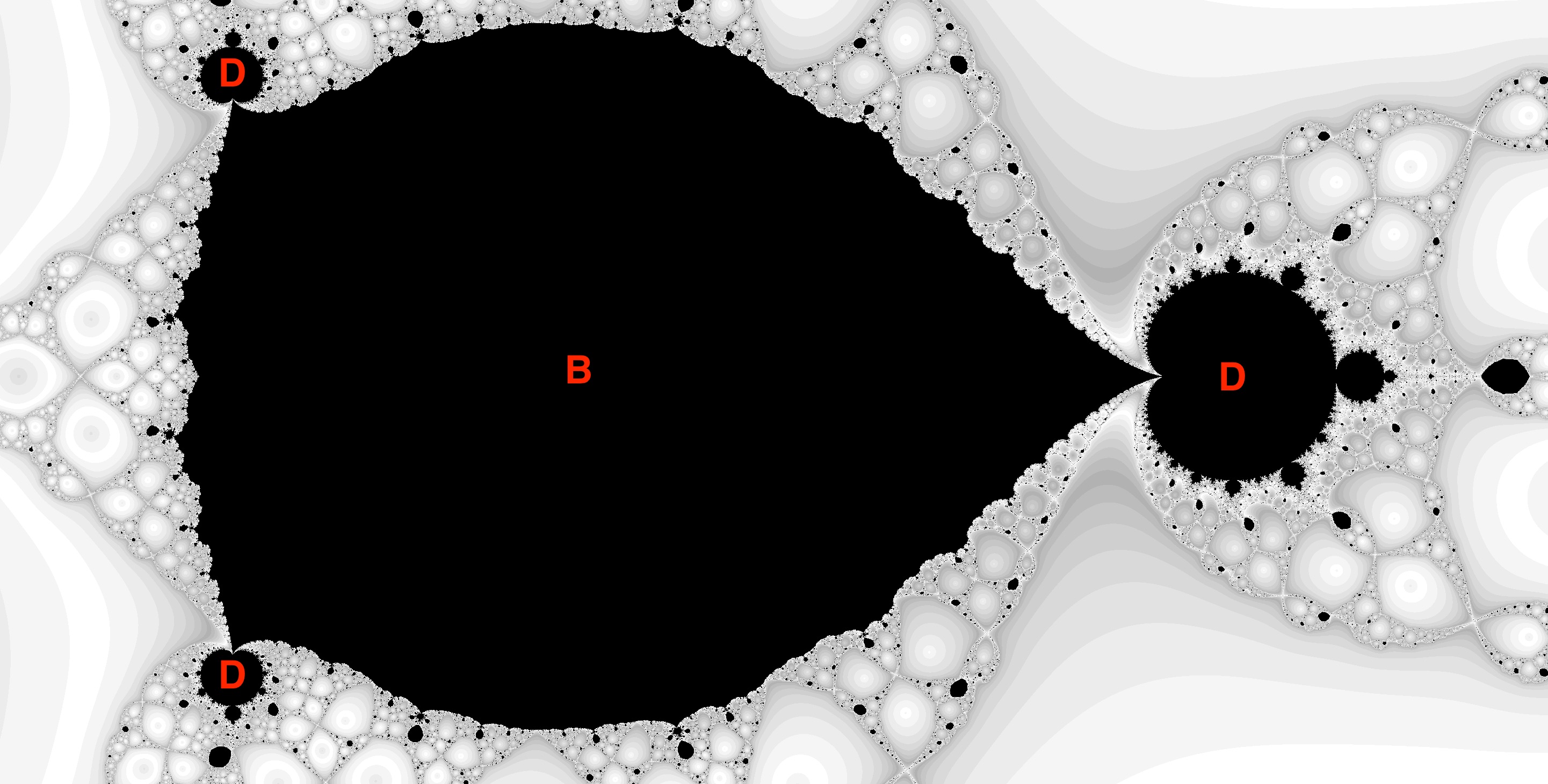}
  \caption{The part of locus $\mathrm{Per_2(0)}\cap\mathrm{nm}_4$ by zooming in the type B component indicated in Figure \ref{per_2}. The Main Theorem asserts that the hyperbolic components in $\mathrm{nm}_4$ containing a part indicated by D are bounded. The component labelled B has ``fractal'' boundary while those labelled D have real algebraic boundaries.}
\end{figure}

\subsection{Idea of proof of the Main Lemma} Our overall strategy mimics that of Epstein \cite{Epstein00} and proceeds by contradiction. We begin by assuming the existence  of a holomorphic family $N_t$ such that $[N_t]$ meets $\mathcal{H}$ in a degenerating sequence $N_{t_k}$.  We then show the existence of a suitable rescaling limit $g=\lim\limits_{k\to\infty}M_k^{-1}\circ N_{t_k}^{\circ q}\circ M_k$.  Using both analytical and geometric estimates, we show that the critical orbit dynamics of $g$ are constrained. Finally, we use Epstein's refined Fatou-Shishikura inequality \cite{Epstein99} to derive further constraints on the critical orbit dynamics of the limit, leading to a contradiction.

Our arguments here replace some of the delicate analytical estimates in Epstein's proof \cite{Epstein00} with a softer analysis using Berkovich dynamics. This fruitful idea was used by Kiwi in \cite{Kiwi15} to investigate  rescaling limits for one-parameter holomorphic families. Later it was used in DeMarco-Faber's work on the weak limits of measures of maximal entropy under degenerating sequences and families, see \cite{DeMarco14, DeMarco16}. Inspired by each of these investigations, our present work is based on  recent results on degenerating Newton maps obtained by the first author in \cite{Nie-thesis}.  For details of what follows in the remainder of this subsection, see \cite{Kiwi06,Kiwi14} and section \ref{Berk} below.  

Let $\{f_t\}$ be a holomorphic family, let $\mathbb{L}$ be the completion of the field of formal Puiseux series in the indeterminate $t$ over $\mathbb{C}$, and let $\mathbf{P}^1$ be the corresponding Berkovich space. It turns out that the family $\{f_t\}$ induces a map $\mathbf{f}:\mathbf{P}^1\to\mathbf{P}^1$ and the dynamical system $(\mathbf{f}, \mathbf{P}^1)$ can be regarded as encoding the possible limits of the complex dynamical systems $(f_t, \mathbb{P}^1)$ at scales and locations varying in $t$. \par
Returning the discussion back to Newton maps, a holomorphic family $\{N_t\}$ of Newton maps induces a Newton map $\mathbf{N}\in\mathbb{L}(z)$ acting on the Berkovich space $\mathbf{P}^1$. The Berkovich dynamics of $\mathbf{N}$ give restrictions on the limiting locations of the attracting cycles of $N_t$ which in turn constrain the limiting behavior of orbits of critical points.
Another ingredient, which uses only complex analysis, is the estimation of the sizes of some immediate basins of the attracting cycles; see Proposition \ref{Fatou-size}, which is an analogue of \cite[Proposition 5]{Epstein00}.  

\subsection{Boundaries of hyperbolic components and tameness} The proof of Main Theorem exploits the fact that the boundaries of such hyperbolic components are not too pathological.  Our techniques allow us to obtain some partial results regarding boundedness of other types of hyperbolic components. 

A rational map in the boundary of a hyperbolic component has either a critical point in Julia set or an indifferent cycle. The boundaries of hyperbolic components, and hence the hyperbolic components themselves, may have complicated structure, see \cite{Bonifant18} for the hyperbolic components of antipode preserving cubic rational maps. In \cite{Milnor14}, Milnor conjectured that if there is a rational map having no indifferent cycles in the boundary of some hyperbolic component, then this boundary is a fractal set. However, in the complementary case, Milnor conjectured that the boundaries of corresponding hyperbolic components are in fact semi-algebraic.

To formulate our partial results, we need a very weak notion of accessibility of a boundary point. 
\begin{definition}
For $n\ge 1$, suppose $\mathcal{U}\subset \mathbb{P}^{n}$ is an analytic space and $x \in \partial \mathcal{U}$. 
We say the pair $(\mathcal{U},x)$ is \emph{holomorphically sequentially accessible} if there is a holomorphic map $\phi:\mathbb{D}\to\mathbb{P}^{n}$ and a sequence $\{t_k\}\subset\mathbb{D}$ with $t_k\to 0$ as $k\to\infty$ such that $\phi(0)=x$ and $\phi(t_k)\in\mathcal{U}$. 
\end{definition}
This is much weaker than other notions of accessibility. For example, in $\mathbb{P}^1=\mathbb{C}\cup \{\infty\}$, let $\mathcal{U}= \mathbb{C}\setminus \{ se^{ie^s} : s\geq 0\}$.  Then $(\mathcal{U}, \infty)$ is holomorphically sequentially accessible via the family $\phi(t)=1/t$. On the other hand, consider $\mathcal{V}=\{(x,y) \in \mathbb{C}^2: e^{-1/|x|^2}<|y|<2 e^{-1/|x|^2}\}$. The  pair $(\mathcal{V}, 0)$ is not holomorphically sequentially accessible. Using similar ideas it is easy to further arrange that $\mathcal{V}$ is simply-connected. \par

To set up the next definition, let $\mathcal{H} \subset \mathrm{rat}_d$ be a hyperbolic component, and $\widetilde{\mathcal{H}}$ its lift to $\mathrm{Rat}_d$. Recall that we have a GIT compactification  $\mathrm{rat}_d\subset \overline{\mathrm{rat}}^{\mathrm{GIT}}_d$, which is the categorical quotient of the semistable points in $\overline{\mathrm{Rat}}_d=\mathbb{P}^{2d+1}$, see \cite{Silverman98}.  

\begin{definition}[Tame boundary point]
\label{def:tame}
A boundary point $[f] \in \partial{\mathcal{H}}$ of a hyperbolic component $\mathcal{H}\subset \overline{\mathrm{rat}}^{\mathrm{GIT}}_d$ is a \emph{tame} boundary point if (i) there exists a point $f_0 \in \partial{\widetilde{\mathcal{H}}}\subset \overline{\mathrm{Rat}}_d$ such that $(\widetilde{\mathcal{H}}, f_0)$ is holomorphically sequentally accessible via a family $f_t$, and (ii) for some sequence $t_k \to 0$, $[f_{t_k}] \to [f]$. Otherwise, the boundary point $[f]$ of $\mathcal{H}$ is \emph{wild}. 
\end{definition}
Note that in the definition we do not require that $f_0$ be semistable, i.e. that $[f]=[f_0]$ in the GIT compactification. 

\begin{theorem}\label{accessible-bounded}
Let $\mathcal{H}\subset \mathrm{nm}_4$ be a hyperbolic component of quartic Newton maps having a free (super)attracting cycle whose grand orbit basin contains both additional critical points. If $[N] \in \partial{\mathcal{H}} \cap \partial\mathrm{nm}_4$, then either
\begin{enumerate}
\item $[N]$ is wild, or 
\item $[N]=[N_0]$ is tame, where in projective coordinates, 
$$N_0([X:Y])=YN_{\{0, 1, r\}}([X:Y])$$ 
for $r\in\mathbb{C}\setminus\{0,1\}$ such that $N_{\{0, 1, r\}}$ has a nonfixed (super)attracting, irrationally indifferent or parabolic-repelling cycle. 
\end{enumerate}
\end{theorem}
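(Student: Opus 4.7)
The plan is to assume $[N] \in \partial\mathcal{H} \cap \partial\mathrm{nm}_4$ is tame and derive that it must have the form in (2). By tameness, we obtain a holomorphic family $\{f_t\}:\mathbb{D}\to\overline{\mathrm{Rat}}_4$ with $f_t\in\widetilde{\mathcal{H}}$ along a sequence $t_k\to 0$ and $[f_{t_k}]\to[N]$ in $\overline{\mathrm{rat}}_4^{\mathrm{GIT}}$. Since $\mathrm{NM}_4$ is closed in $\mathrm{Rat}_4$, after shrinking $\mathbb{D}$ we may assume $f_t=N_{P_t}$ is a quartic Newton map for every $t\in\mathbb{D}^*$. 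After a holomorphic family of affine conjugations and possibly a base change $t\mapsto t^{1/n}$, we normalize so that $P_t(z)=z(z-1)(z-r(t))(z-s(t))$ with $r(t), s(t)$ meromorphic at $0$.

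Because $[f_{t_k}]\to[N]\in\partial\mathrm{nm}_4$, the configuration $\{0, 1, r(t_k), s(t_k)\}$ leaves every compact of the configuration space of four distinct points in $\mathbb{C}$ modulo $\mathrm{Aut}(\mathbb{C})$. After possibly relabeling the roots (the $S_4$-action), only two scenarios arise at $t=0$: a \emph{collision}, in which two of $\{0, 1, r(0), s(0)\}$ agree as complex numbers, or an \emph{escape}, in which (after a second affine normalization) exactly one of $r(t), s(t)$ tends to $\infty$. The remainder of the argument rules out collisions and then analyzes the escape case.

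To rule out collisions, I would apply the Berkovich framework of Section \ref{Berk}. The family $\{N_t\}$ induces a Newton map $\mathbf{N}\in\mathbb{L}(z)$ on the Berkovich projective line $\mathbf{P}^1$, and the hypothesis on $\mathcal{H}$—a single free (super)attracting cycle of period at least $2$ whose grand orbit basin contains both additional critical points—translates into a non-repelling periodic cycle of $\mathbf{N}$ of period at least $2$ that absorbs both additional critical points of $\mathbf{N}$. On the other hand, any collision $r(0)\in\{0, 1, s(0)\}$ produces, via the degenerating-Newton-map analysis of \cite{Nie-thesis}, a reduction of $\mathbf{N}$ containing a new attracting Berkovich fixed point at the collision node (the multiplier at a double root of the limit polynomial being $1/2$ by direct calculation). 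Combining this location datum with the complex-analytic immediate-basin size estimate of Proposition \ref{Fatou-size}, one of the two additional critical points is forced to lie in the basin of the newly-created fixed attractor, contradicting the hypothesis that both are captured by the free non-fixed cycle.

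In the escape case, say $s(t)\to\infty$, a direct computation in homogeneous coordinates shows $[N_{P_t}]\to[Y\cdot N_{\{0, 1, r\}}]$ in $\overline{\mathrm{Rat}}_4$, where $r=r(0)\in\mathbb{C}\setminus\{0, 1\}$ (the exclusions of $0$ and $1$ following from the already-ruled-out collisions). To deduce the dynamical condition on $r$, one tracks the free attracting cycle: by Berkovich control it remains in a compact subset of $\mathbb{C}$ bounded away from the escaping root, its multipliers are at most one in modulus along $t_k$, and a Hurwitz-type argument on the polynomial equations defining periodic cycles of fixed period gives a limit cycle of $N_{\{0, 1, r\}}$ of period at least $2$ and multiplier of modulus at most one. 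Since the fixed points of $N_{\{0, 1, r\}}$ are $0, 1, r, \infty$, the limit cycle is non-fixed. The three allowed multiplier types — attracting, irrationally indifferent, and parabolic — yield the stated trichotomy; the \emph{parabolic-repelling} designation in the parabolic case follows from a further tracking of which petals the cycle of $N_t$ approaches in the parabolic basin of the limit map. The main obstacle is the collision-ruling step, whose rigorous execution relies on the detailed Berkovich-reduction classification for degenerating quartic Newton maps developed in \cite{Nie-thesis}.
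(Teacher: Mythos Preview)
Your overall architecture (assume tame, normalize the family, eliminate most degeneration types, and in the surviving case track the free cycle to a non-repelling cycle of a cubic Newton map) matches the paper's. But several of the steps you mark as routine are where the actual work lies, and one step uses the wrong tool.

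\textbf{The collision-ruling step is not what you say it is.} You argue that a collision produces an attracting fixed point of the reduction $\widehat N$ and that ``one of the two additional critical points is forced to lie in the basin of the newly-created fixed attractor.'' But this is a statement about the dynamics of the \emph{limit} $\widehat N$, not of $N_{t_k}$; a critical point of $N_{t_k}$ can perfectly well lie in the basin of the free cycle for every $k$ while its reduction is attracted to a fixed point of $\widehat N$. The paper's mechanism is sharper: one first shows (Proposition~\ref{accessible-intersection}) that the limit $\Gamma$ of the free cycle \emph{meets the hole}, and that in fact $\{0\}\subsetneq\Gamma$ (the analogue of Proposition~\ref{cycle123}). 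Then Proposition~\ref{Fatou-size} forces the immediate-basin component over the hole to shrink to the hole, so some iterate of \emph{each} additional critical point converges to the hole; hence each reduction is \emph{prefixed} (lands exactly on the hole after finitely many steps of $\widehat N$), not merely attracted. This contradicts Proposition~\ref{Berk-dyn}, which guarantees that in types $1,2,3a$ at least one reduced critical orbit is \emph{infinite}. Your sketch never isolates the prefixed/infinite-tail distinction, which is the crux.

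\textbf{In the escape case, ``the cycle stays bounded'' needs proof.} Your escape normalization is affinely the paper's type $3b$; in those coordinates, your boundedness claim is exactly the assertion $\Gamma=\{0\}$, which the paper obtains only after combining Proposition~\ref{accessible-intersection}, Proposition~\ref{Fatou-size}, and Proposition~\ref{Berk-dyn}. A priori the cycle could split between the two repelling type~II vertices, and ruling out the mixed case is the same prefixed-vs-infinite-tail argument as above.

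\textbf{``Petal tracking'' does not yield parabolic-repelling.} You have only a \emph{single} attracting cycle converging to the parabolic limit, not the full collection of split cycles required by Lemma~\ref{par-att-ind}. The paper instead applies the refined Fatou--Shishikura inequality (Proposition~\ref{number}) to the cubic Newton limit $N_{\{0,1,r\}}$: three of its four critical points are fixed, so $\delta\le 1$, whence $\gamma\le 1$; a parabolic-attracting or parabolic-indifferent cycle would contribute $\gamma_{\langle z\rangle}=\nu+1\ge 2$, a contradiction.

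Finally, your collision/escape dichotomy is not exhaustive as stated (e.g.\ both $r,s\to\infty$, or $r,s\to 0$ at different rates); the paper's Lemma~\ref{types} gives the clean four-case normal form.
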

In other words, if $\mathcal{H}$ is unbounded, then as one approaches a boundary point at infinity, $\mathcal{H}$ either looks geometrically very weird, or the dynamics degenerates in a very specific way that our techniques cannot rule out. 
The proof proceeds as follows. If $[N] \in\partial{\mathcal{H}} \cap \partial\mathrm{nm}_4$ is tame, then by definition, as in the proof of Theorem \ref{Thm-bounded}, we can find a holomorphic family degenerating along a sequence in $\mathcal{H}$. We again analyze the induced Berkovich dynamics and use an analog  of the Main Lemma. \par

There is a classification of hyperbolic components in $\mathrm{nm}_4$ which is somewhat analogous to that for quadratic rational maps; see Section \ref{type}. Theorem \ref{accessible-bounded} asserts that a hyperbolic component $\mathcal{H}\subset\mathrm{nm}_4$ of type indicated by A, B or C in Figure \ref{per_2} is either (i) bounded in $\mathrm{nm}_4$, or (ii) its boundary at infinity consists of wild points and/or tame points of the above specific type.

In the other direction, we obtain unboundedness results for components we call type IE, which stands for ``immediate escape''; see Section \ref{type}. Using quasiconformal deformations and ideas of Petersen-Ryd \cite{Petersen00}, we show
\begin{theorem}\label{escape-unbounded}
Suppose $\mathcal{H}\subset \mathrm{nm}_d$ is a hyperbolic component consisting of maps for which some additional critical point lies in the immediate basin of a superattracting fixed point. Then $\mathcal{H}$ is unbounded. 
\end{theorem}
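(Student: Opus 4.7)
The plan is to exhibit an unbounded real-analytic path in $\mathcal{H}$ produced by quasiconformal surgery that deforms the internal dynamics of the immediate basin of the superattracting fixed point, in the spirit of Petersen--Ryd \cite{Petersen00}. Fix $N_{P_0}\in\mathcal{H}$ with a superattracting fixed point $r$ at a root of $P_0$ whose immediate basin $U$ contains an additional critical point $c$; then $N_{P_0}|_U\colon U\to U$ has degree $k\geq 3$. Immediate basins of Newton maps at their roots are simply connected, so a Riemann map $\psi\colon\mathbb{D}\to U$ with $\psi(0)=r$ conjugates $N_{P_0}|_U$ to a Blaschke product $B_0$ of degree $k$ with a critical fixed point at $0$. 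The space $\mathcal{B}_k$ of such Blaschke self-maps of $\mathbb{D}$ is a noncompact real-analytic manifold of positive dimension; I would choose a real-analytic arc $s\mapsto B_s$, $s\in[0,1)$, with $B_0$ as given and a free critical point of $B_s$ tending to $\partial\mathbb{D}$ as $s\to 1$.

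For each $s$ I would then construct a quasiregular map $f_s\colon\widehat{\mathbb{C}}\to\widehat{\mathbb{C}}$ that agrees with $N_{P_0}$ outside a neighborhood of $\overline{U}$, realizes $B_s$ as the internal dynamics on a subdisk of $U$ via a modified uniformization, and interpolates by a quasiconformal homeomorphism on a thin invariant collar inside $U$. Iterated pullback of the standard conformal structure under $f_s$ gives an $f_s$-invariant Beltrami form with uniformly bounded dilatation; straightening via the measurable Riemann mapping theorem yields a rational map $N_s$. Standard qc-surgery bookkeeping---preservation of the global degree $d$, of the number of simple superattracting fixed points (which characterizes a Newton map), of the critical-orbit portrait, and of hyperbolicity---then shows that $N_s$ is again a degree-$d$ Newton map with $[N_s]\in\mathcal{H}$, and that $s\mapsto[N_s]$ is continuous.

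It remains to show $[N_s]\to\infty$ in $\mathrm{nm}_d$ as $s\to 1$. As the free critical point of $B_s$ tends to $\partial\mathbb{D}$, the conformal modulus of the annulus in $\mathbb{D}$ bounded by the critical equipotential of $B_s$ and $\partial\mathbb{D}$ collapses to zero. Transporting through $\psi$ and the qc conjugacy, one obtains for each $s$ an invariant annulus inside the basin of $r$ for $N_s$ whose modulus tends to $0$. Because every root of the underlying polynomial is both a fixed point and a superattracting critical point of $N_s$, the internal conformal geometry of that basin is tightly coupled to the external positions and multipliers of the other fixed points: such a degenerating invariant annulus cannot be absorbed inside the basin without forcing either a pair of roots to coalesce or a root to escape to $\infty$. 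After affine normalization (say, $r_1=0$, $r_2=1$), each alternative sends $[N_s]$ out of every compact subset of $\mathrm{nm}_d$.

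The hard step is this last implication: translating the internal modulus collapse in a single Fatou component into a global degeneration of the root configuration. A priori the collapsing annulus could remain \emph{hidden} inside $U$ with all roots staying bounded and distinct. I would rule this out either by explicit Koebe-type distortion estimates in the affine-normalized frame, tracking $(r_3,\ldots,r_d)$ along the surgery family, or by feeding the family into the Berkovich-dynamical framework developed earlier in the paper (as in the Main Lemma) to extract a nontrivial limit of the root configuration and thereby certify the divergence $[N_s]\to\infty$.
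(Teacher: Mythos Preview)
Your surgery setup---conjugating the basin dynamics to a Blaschke product and deforming along a one-parameter family with a free critical point drifting to $\partial\mathbb{D}$---is exactly the paper's approach. The gap is in your divergence argument. You try to show directly that the collapse of an internal annulus modulus forces the root configuration to degenerate, but as you yourself concede, nothing prevents the collapse from remaining confined to the basin with all roots bounded and separated. Neither of your proposed remedies works cleanly: Koebe-type estimates would have to link the internal conformal geometry of one basin to the positions of the \emph{other} roots, which lie across the Julia set; and the Berkovich framework of this paper is built for holomorphic families over a punctured disk, not for your real-parameter surgery arc. Ironically, the Petersen--Ryd technique you cite is precisely what closes this gap, but it is not a modulus argument.

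The paper argues by contradiction. Assume a subsequence $f_n:=f_{a_n}$ converges to a nondegenerate Newton map $f$. Along the explicit family $B_a(w)=-w^k(w-a)/(1-\bar a w)$ with $a\uparrow 1$, the free critical point $x_a\to\partial\mathbb{D}$ while the hyperbolic distance $d(x_a,B_a(x_a))$ stays bounded. Carath\'eodory convergence of the pointed basins $(\Omega_n,0)\to(\Omega,0)$ together with $d_{\Omega_n}(c_n,0)\to\infty$ forces $c:=\lim c_n\notin\Omega$. Now look at $(\Omega_n,c_n)$: either no Carath\'eodory limit exists, in which case comparison of the hyperbolic and Euclidean metrics gives $|c_n-f_n(c_n)|\to 0$ and hence $f(c)=c\neq 0$, impossible; or a limit $(\Omega',c)$ exists, a simply connected fixed Fatou component of $f$ distinct from $\Omega$ on which $f$ has degree two. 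Ruling out Siegel disks, Herman rings, and (by stability under perturbation) attracting basins, $\Omega'$ must be a parabolic basin. But a degree-$d$ Newton map has no parabolic fixed point: its unique non-superattracting fixed point is $\infty$, repelling with multiplier $d/(d-1)$. This contradiction finishes the proof with no modulus or root-tracking estimates needed.
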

Our use of the term ``escape'' here is  different from that used to describe both hyperbolic components for quadratic rational maps in \cite{Milnor93} and cubic polynomials in \cite{Bonifant10}.  

\subsection{Outline}
In section \ref{pre}, we give the complex analytic background. We also prove some properties of the reduction of the induced map for a degenerate family of quartic Newton maps, and give a classification of hyperbolic components  in $\mathrm{nm}_4$. In section \ref{Berk}, we summarize the structure results regarding Berkovich dynamics of quartic Newton maps used in this work. In section \ref{cycle}, we identify the limits of the distingiushed attracting cycles. Then we prove the Main Lemma in section \ref{compact} and Theorem \ref {accessible-bounded} in section \ref{accessible}. Finally, we prove Theorem \ref{escape-unbounded} in section \ref{qc}.

\subsection{Acknowledgements} Kevin Pilgrim was supported by Simons Collaboration Grant \# 245269.

\section{Complex analytic preliminaries}\label{pre}
\subsection{Rational maps}
Parametrized by the coefficients, the space $\mathrm{Rat}_d$ of degree $d$ rational maps can be identified with an open dense subset of $\mathbb{P}^{2d+1}$. For each point $f\in\mathbb{P}^{2d+1}$, there exists two homogenous degree $d$ polynomials $F_a(X,Y)$ and $F_b(X,Y)$ such that 
$$f([X:Y])=[F_a(X,Y):F_b(X,Y)].$$
Set $H_f=\gcd(F_a,F_b)$. Then we can write $f=H_f\hat f$, where $\hat f$ is a rational map of degree at most $d$. A zero of $H_f$ is called a \textit{hole} of $f$. We say a point $f\in\mathbb{P}^{2d+1}$ is a \textit{degenerate rational map} if it has a hole. Denote $\mathrm{Hole}(f)$ the set of all holes of $f$.
\begin{lemma}\cite[Lemma 4.1]{DeMarco05}\label{convergence}
Suppose that a sequence $\{f_k\}\subset\mathrm{Rat}_d$ converges to $f=H_f\hat f\in\mathbb{P}^{2d+1}$. Then $f_k$ converges to $\hat f$ locally uniformly outside $\mathrm{Hole}(f)$.
\end{lemma}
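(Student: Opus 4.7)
The plan is to exploit the continuity of projectivization on $\mathbb{C}^2 \setminus \{0\}$, together with the elementary fact that coefficient-wise convergence of homogeneous polynomials entails uniform convergence of their values on compact sets. The cleanest way to carry this out is to lift everything to the sphere of unit-norm pairs and to measure distances on the target using the Fubini--Study (equivalently, spherical) metric on $\mathbb{P}^1$.

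First, I would choose homogeneous representatives. By definition of convergence in $\mathbb{P}^{2d+1}$, after rescaling we may select homogeneous degree $d$ polynomials $(F_{a,k},F_{b,k})$ representing $f_k$ whose coefficients converge to those of a chosen representative $(F_a,F_b)$ of $f$. Factoring, write $F_a=H_f\hat F_a$ and $F_b=H_f\hat F_b$ with $\gcd(\hat F_a,\hat F_b)=1$, so that $[\hat F_a:\hat F_b]=\hat f$ is a well-defined rational map on all of $\mathbb{P}^1$ (including at zeros of $\hat F_b$, where it takes the value $\infty$).

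Next, fix a compact set $K\subset\mathbb{P}^1\setminus\mathrm{Hole}(f)$ and argue that the limit pair $(F_a,F_b)$ is bounded away from the origin on $K$. This is a combination of two facts: (i) since $K$ avoids the zero set of $H_f$, the section $H_f$ has modulus bounded below on $K$ by some $\varepsilon>0$; and (ii) since $\hat F_a,\hat F_b$ are coprime and hence share no common zero on $\mathbb{P}^1$, the pair $(\hat F_a,\hat F_b)$ has norm globally bounded below on $\mathbb{P}^1$ by some $\delta>0$. Multiplying gives a uniform lower bound on $\|(F_a,F_b)\|$ over $K$. Coefficient-wise convergence of $(F_{a,k},F_{b,k})$ to $(F_a,F_b)$ upgrades, since these are polynomials on the compact sphere of lifts, to uniform convergence. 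Hence for all sufficiently large $k$, the pair $(F_{a,k},F_{b,k})$ also has norm bounded below on $K$ by half the above bound.

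Finally, the projectivization map $\mathbb{C}^2\setminus\{0\}\to\mathbb{P}^1$ is uniformly continuous on any set where the norm is bounded below (with respect to the spherical metric on the target). Combining uniform convergence of $(F_{a,k},F_{b,k})$ to $(F_a,F_b)$ with this uniform continuity yields
\[
d_{\mathrm{sph}}\bigl(f_k(p),\,[F_a(p):F_b(p)]\bigr) = d_{\mathrm{sph}}\bigl(f_k(p),\,\hat f(p)\bigr)\longrightarrow 0
\]
uniformly for $p\in K$, which is the desired conclusion. There is no serious obstacle in this argument; the only subtlety is notational, namely working consistently with the spherical metric so that behavior near points where $\hat f(p)=\infty$ (zeros of $\hat F_b$ that are not holes) is handled uniformly alongside the ``finite'' behavior, rather than passing to affine coordinates which would force a case split.
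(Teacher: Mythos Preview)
Your argument is correct and is essentially the standard proof of this fact. Note that the paper does not actually give its own proof of this lemma; it simply quotes the statement from \cite[Lemma 4.1]{DeMarco05} without argument, so there is nothing in the paper to compare your approach against.
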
 
For $t\in\mathbb{D}$, a collection $\{f_t\}\subset\mathbb{P}^{2d+1}$ is a \textit{holomorphic family of degree $d\ge 1$ rational maps} if the map $\mathbb{D}\to\mathbb{P}^{2d+1}$, sending $t$ to $f_t$, is a holomorphic map and $f_t\in\mathrm{Rat}_d$ if $t\not=0$. We say $\{f_t\}$ is \textit{degenerate} if $f_0\not\in\mathrm{Rat}_d$. For the version of Lemma \ref{convergence} for holomorphic families, we refer \cite[Lemma 3.2]{Kiwi15}.\par
The moduli space $\mathrm{rat}_d:=\mathrm{Rat}_d/\mathrm{PSL}_2(\mathbb{C})$, modulo the action by conjugation of the group of M\"obius transformations, is a complex orbifold of dimension $2d-2$. For quadratic case, the space $\mathrm{rat}_2$ is biholomorphic to $\mathbb{C}^2$ \cite[Lemma 3.1]{Milnor93}.\par

\subsection{Newton maps} 
Fix $d \geq 2$, and suppose $(r_1, \ldots, r_d) \in \mathbb{C}^d$ are pairwise distinct. Let $P:=(z-r_1)\cdot \ldots \cdot (z-r_d)$.  We denote the corresponding Newton map by 
\[ N_{\{r_1, \ldots, r_d\}}(z):=N_P(z) \in \mathrm{NM}_d.\]
It is often convenient to work with marked maps. We denote by $\mathrm{NM}_d^* \subset \mathrm{Rat}_d\times \mathbb{C}^d$ the set of marked maps $N_{(r_1, \ldots, r_d)}(z)$; note the subscript is a list, not a set. 

The map $N _P$ also has degree $d$ and each root of $P$ is a superattracting fixed point of $N_P$. The only other fixed point of $N_P$ is at $\infty$ with multiplier $d/(d-1)$. Hence if two Newton maps are M\"obius conjugate, then the conjugacy is affine. Thus the moduli space of degree $d$ Newton maps is naturally defined by $\mathrm{nm}_d:=\mathrm{NM}_d/\mathrm{Aut}(\mathbb{C})$, which is a subset of $\mathrm{rat}_d$. Since Newton maps are uniquely determined by the roots of the corresponding polynomials, the moduli space $\mathrm{nm}_d$ has complex dimension $d-2$.
The moduli space of marked maps $\mathrm{nm}_d^*:=\mathrm{NM}_d^*/\mathrm{Aut}(\mathbb{C})$ is canonically isomorphic to $\mathbb{C}^{d-2}-\Delta$, where $\Delta=\{r_i=0, r_j=1, r_k=r_l (k\neq l)\}_{i,j,k,l \in \{1, \ldots, d-2\}}$.  \par
A family of Newton maps tends to infinity in parameter space $\mathrm{NM}_d$ if and only if some roots collide and/or some roots tend to infinity. Using homogeneous coordinates, we may extend the notation $N_{\{r_1, \ldots, r_d\}}$ to describe such degenerate maps. 

\begin{example}
Consider the cubic Newton map $N_{\{0,1,t\}}(z)$ for the polynomial $z(z-1)(z-t)$.  If $t\to 0$, in projective coordinates we get the degenerate Newton map with a hole at $z=0$
$$N_0([X:Y])=X[2X^2-XY:3XY-2Y^2]=:N_{\{0,0,1\}}([X:Y]).$$
In this case, we have $\widehat{N}_0$ has degree two, a superattracting fixed point at $1$, and an attracting fixed point at $0$ with multiplier $1/2$.

If $t\to\infty$, we get the degenerate Newton map with a hole at $z=\infty$
$$N_\infty([X:Y])=Y[X^2:2XY-Y^2]=:N_{\{0,1,\infty\}}([X:Y]).$$
In this case, $\widehat{N}_\infty=N_{\{0,1\}}$ is a degree two nondegenerate Newton map and has two superattracting fixed points at $0$ and $1$. 

This example is interesting since the first family $N_{\{0, t, 1\}}$ is conjugate to the second family $N_{\{0, 1, 1/t\}}$ via the affine family $M_t(z)=z/t$. 
\end{example}

\subsection{Lifting to marked normalized maps}\label{lifting}

By conjugating via an affine map so that a pair of roots maximizing the planar distance between the $d$ roots is sent to the pair $\{0, 1\}$, we obtain a finite-to-one surjective map 
\[ p: \mathrm{nm}_d^{\#}:=\overline{\mathbb{D}}^{d-2}-\Delta \twoheadrightarrow \mathrm{nm}_d\]
given by 
\[ (r_3, \ldots, r_d) \mapsto [N_{\{0, 1, r_3,  \ldots, r_d\}}].\]
Note that $p$ factors as a composition $\mathrm{nm}_d^\# \hookrightarrow \mathrm{NM}_d^* \twoheadrightarrow \mathrm{nm}_d$.  

\begin{lemma}
\label{lemma:semialg}
Suppose $\mathcal{H}\subset \mathrm{nm}_d$ is a hyperbolic component consisting of maps with $d-2$ distinct free (super)attracting cycles.
Then $p^{-1}(\mathcal{H})\subset \overline{\mathbb{D}}^{d-2}$ is semi-algebraic.  It consists of finitely many connected open bounded sets $\mathcal{H}^\#$, each of which is semi-algebraic. 
\end{lemma}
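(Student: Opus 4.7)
The plan is to combine a theorem of Milnor with elementary facts about semi-algebraic sets. Milnor's theorem \cite{Milnor14} asserts that any hyperbolic component in an algebraic family that possesses the maximal possible number of distinct (super)attracting cycles is semi-algebraic; I would apply this to $\mathcal{H}$ and then transport the conclusion back to $\overline{\mathbb{D}}^{d-2}$ through the finite map $p$, invoking the Tarski--Seidenberg principle.

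First I would check that $\mathcal{H}$ realizes the maximum number of free attracting cycles in the algebraic family $\mathrm{nm}_d$. Since a Newton map $N_P$ has at most $d-2$ free critical points and each free attracting cycle must capture at least one such critical point, the number of disjoint free attracting cycles is at most $d-2$. By hypothesis $\mathcal{H}$ consists of maps with $d-2$ distinct free attracting cycles, so this bound is attained. Milnor's theorem then implies that $\mathcal{H}$ is a semi-algebraic subset of $\mathrm{nm}_d$, with respect to its natural real algebraic structure as a quasi-projective variety.

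Next I would verify that $p$ is a semi-algebraic map. Its domain $\mathrm{nm}_d^{\#}=\overline{\mathbb{D}}^{d-2}-\Delta$ is semi-algebraic in $\mathbb{R}^{2(d-2)}$, being cut out by the polynomial inequalities $|r_i|^2\leq 1$ and the polynomial equations and inequations defining $\Delta$. The map $p$ factors as the polynomial parametrization $(r_3,\ldots,r_d)\mapsto N_{(0,1,r_3,\ldots,r_d)}\in \mathrm{NM}_d^{*}$ followed by the algebraic quotient $\mathrm{NM}_d^{*}\twoheadrightarrow \mathrm{nm}_d$, both of which are semi-algebraic. By Tarski--Seidenberg, the preimage $p^{-1}(\mathcal{H})$ is then semi-algebraic in $\mathrm{nm}_d^{\#}$, bounded since contained in $\overline{\mathbb{D}}^{d-2}$, and open because $\mathcal{H}$ is open and $p$ is continuous. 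Finally, a semi-algebraic subset of $\mathbb{R}^{n}$ has only finitely many connected components, each of which is itself semi-algebraic; these components are the desired sets $\mathcal{H}^{\#}$.

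The main obstacle is really just citing Milnor's semi-algebraicity theorem, which is the only nontrivial input: it is exactly the statement that promotes openness and local complex-analyticity of $\mathcal{H}$ to a genuine real-algebraic description. Once $\mathcal{H}$ is known to be semi-algebraic, everything else is a routine application of the stability of the semi-algebraic class under polynomial maps, preimages, connected components, and intersection with the semi-algebraic polydisc $\overline{\mathbb{D}}^{d-2}$.
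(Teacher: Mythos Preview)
Your proposal is correct and follows the same overall strategy as the paper: invoke Milnor's semi-algebraicity theorem, then use closure properties of semi-algebraic sets (preimages under polynomial maps, intersections, finiteness and semi-algebraicity of connected components). The implementation differs slightly. You apply Milnor's theorem to $\mathcal{H}\subset\mathrm{nm}_d$ and then pull back along the quotient $\mathrm{NM}_d^*\twoheadrightarrow\mathrm{nm}_d$; the paper instead applies Milnor's theorem upstairs, to the algebraic family $\{N_{\{0,1,r_3,\ldots,r_d\}}\}\subset\mathbb{P}^{2d+1}$, and then pulls back via the explicit polynomial map $(r_3,\ldots,r_d)\mapsto N_{\{0,1,r_3,\ldots,r_d\}}$ before intersecting with the polydisc. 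The paper's route is a bit cleaner because every map involved is visibly polynomial in coordinates, so it never needs to discuss the real-algebraic structure of the moduli space $\mathrm{nm}_d$ or the semi-algebraicity of the quotient map, which you invoke without justification. Your route works once those points are checked. One terminological quibble: Tarski--Seidenberg concerns \emph{images} under projection; the fact you actually need---that preimages of semi-algebraic sets under semi-algebraic maps are semi-algebraic---is more elementary and follows directly from the definition.
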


\begin{proof} Milnor \cite{Milnor14} shows that in any complex algebraic family $\mathcal{F} \subset\overline{\mathrm{Rat}}_d :=\mathbb{P}^{2d+1}$ of complex dimension $m$, a component $\widetilde{\mathcal{H}}$ of the locus where $m$ distinct attracting cycles have multipliers less than one in modulus is a semi-algebraic set.  Applying this to the $m:=d-2$-complex-dimensional algebraic family $\mathcal{F}$ of maps $N_{\{0,1, r_3, \ldots, r_d\}} \subset \mathrm{Rat}_d$, we conclude that such hyperbolic components $\widetilde{\mathcal{H}} \subset \mathcal{F}$ are semi-algebraic in $\mathbb{P}^{2d+1}$. The natural quotient map $q: \mathbb{C}^{d-2}-\Delta \to \mathrm{NM}_d\subset \mathbb{P}^{2d+1}$ given by $(r_3, \ldots, r_d) \mapsto N_{\{0,1,r_3, \ldots, r_d\}}$ is a polynomial map, hence the preimage $\widetilde{\widetilde{\mathcal{H}}}:=q^{-1}(\widetilde{\mathcal{H}})\subset \mathbb{C}^{d-2}$ is semi-algebraic. The locus $\mathrm{nm}_d^\# \subset \mathbb{C}^{d-2}$ is defined by a collection of real-algebraic polynomial equalities and inequalities of the form  
\[ r_i\neq 0, r_j \neq 1, r_k \neq r_l (k\neq l),  |r_n|^2\leq 1 \;\; (i,j,k,l,n \in \{3, \ldots, d\}) \]
and so is semi-algebraic. An intersection of semi-algebraic sets is semi-algebraic  (see \cite{Bochnak98}).  Hence $\widetilde{\widetilde{H}}\cap \mathrm{nm}_d^\#=p^{-1}(\mathcal{H})$ is semi-algebraic. The natural projection $\mathrm{nm}_d^\# \to \mathrm{nm}_d$ is continuous and $\mathcal{H}$ is open, hence $p^{-1}(\mathcal{H})$ is open.  A semi-algebraic set has finitely many connected components, each of which is semi-algebraic (\cite[Proposition 2.8]{Bochnak98}), and the Lemma follows.

\end{proof}

We now specialize to quartic maps and first roughly classify the possible degeneracies. Suppose $(r_k, s_k) \in\mathrm{nm}_4^{\#}$ converges to $(r,s) \in \Delta \cap \partial  \mathrm{nm}_4^{\#}$.  Up to conjugacy, there are precisely three qualitatively distinct possibilities:
\begin{enumerate}
\item \textit{type 1} $0=r\neq s\neq 1$; 
\item \textit{type 2} $0=r\neq s=1$;
\item \textit{type 3} $0=r=s\neq 1$.
\end{enumerate}
In fact, if $(r(t),s(t))$ is holomorphic in $t$ and defines a degenerating family $N_t$, we will further distinguish two subcases 3a, 3b in type 3 according to the relative rates at which $(r(t),s(t)) \to (0,0)$ (Lemma \ref{types}). \par

\subsection{Epstein's invariants}  To control the dynamical possibilities of rescaling limits, we will use Epstein's refined version of the Fatou-Shishikura inequality (FSI) \cite{Epstein99}, presented here in this section. 

Let $f$ be an analytic map on $U\subset\mathbb{C}$ fixing $z\in U$. Then the multiplier $\rho_z(f)$ is defined by $\rho_z(f)=f'(z)$. If $f$ is not the identity, then the \textit{topological multiplicity} $\mathfrak{m}_z(f)$ of $f$ at $z$ is defined by 
$$\mathfrak{m}_z(f)=\frac{1}{2\pi i}\int_{\mathcal{C}}\frac{1-f'(z)}{z-f(z)}dz$$
and the \textit{holomorphic index} is defined by
$$\mathfrak{i}_z(f)=\frac{1}{2\pi i}\int_{\mathcal{C}}\frac{1}{z-f(z)}dz,$$
where  $\mathcal{C}$ is any sufficiently small, positively-oriented, simple closed curve around $z$. These quantities are invariant under holomorphic change of coordinates, and the topological multiplicity $\mathfrak{m}_z(f)$ is always a positive integer. Note that $\mathfrak{m}_z(f)=1$ if and only if $\rho_z(f)\not=1$, and in this case
$$\mathfrak{i}_z(f)=\frac{1}{1-\rho_z(f)}.$$\par
The Cauchy's integral formula implies
$$\sum_{z=f(z)\in V}\mathfrak{m}_z(f)=\frac{1}{2\pi i}\int_{\partial V}\frac{1-f'(z)}{z-f(z)}dz$$
and 
$$\sum_{z=f(z)\in V}\mathfrak{i}_z(f)=\frac{1}{2\pi i}\int_{\partial V}\frac{1}{z-f(z)}dz$$
for any open set $V\Subset U$ with rectifiable boundary containing no fixed points. Thus if $f:\mathbb{P}^1\to\mathbb{P}^1$ be a rational map of degree $d$, it follows from the residue theorem that
$$\sum_{z=f(z)\in\mathbb{P}^1}\mathfrak{m}_z(f)=d+1$$
and 
$$\sum_{z=f(z)\in\mathbb{P}^1}\mathfrak{i}_z(f)=1.$$\par
We say a fixed point $z$ of the map $f$ is \textit{superattracting}, \textit{attracting}, \textit{indifferent} or \textit{repelling} according to $\rho_z(f)=0$, $0<|\rho_z(f)|<1$, $|\rho_z(f)|=1$ or $|\rho_z(f)|>1$. If $|\rho_z(f)|\not=1$ or $\rho_z(f)=1$, we have $\mathfrak{m}_z(f^n)=\mathfrak{m}_z(f)$ for any $n\ge 1$. When studying the case $\rho_z(f)=1$, it is convenient to work on the \textit{r\'esidu it\'eratif} $\mathfrak{I}_z(f)$ defined by 
$$\mathfrak{I}_z(f)=\frac{\mathfrak{m}_z(f)}{2}-\mathfrak{i}_z(f).$$
In this case for all $n\not=0$, we have 
$$\mathfrak{I}_z(f)=n\mathfrak{I}_z(f^n).$$\par
For a fixed point $z$ of the map $f$, if $\rho_z(f)=e^{2\pi ip/q}$, where $(p,q)=1$, and $f^q$ is not identity, then there is an integer $\nu$ such that $\mathfrak{m}_z(f^q)=\nu q+1$. We say this fixed point $z$ is \textit{parabolic} and its \textit{degeneracy} is $\nu$. Furthermore, we say $z$ is \textit{parabolic-attracting}, \textit{parabolic-indifferent} or \textit{parabolic-repelling} according to $\mathrm{Re}(\mathfrak{I}_z(f))$ is negative, zero or positive.\par
For a degree $d\ge 1$ rational map $f:\mathbb{P}^1\to\mathbb{P}^1$, we say $z$ is a \textit{periodic point} if $f^n(z)=z$ for some $n\ge 1$. The smallest such $n$ is called the \textit{period} of $z$.  Denote $\langle z\rangle$ the cycle $\{z, f(z),\dots, f^{n-1}(z)\}$ and denote $z^{(i)}=f^i(z)\in\langle z\rangle$. The multiplier, multiplicity and index of the cycle $\langle z\rangle$ are the corresponding invariants of $z$ as a fixed point of $f^n$. The Fatou-Shishikura inequality asserts that $f$ has at most $2d-2$ nonrepelling cycles, see \cite{Shishikura87}. Now associate to each cycle $\langle z\rangle$ the \textit{cycle invariant} 
$$\gamma_{\langle z\rangle}=
\begin{cases}
0&\text{if}\  \langle z\rangle\ \text{is\  repelling\ or\ superattracting},\\
1&\text{if}\  \langle z\rangle\ \text{is\  attracting\ or\ irrationally indifferent},\\
\nu &\text{if}\  \langle z\rangle\ \text{is\  parabolic-repelling},\\
\nu+1& \text{if}\  \langle z\rangle\ \text{is\  parabolic-attracting\ or\ parabolic-indifferent}.
\end{cases}$$
We now define two invariants of the map $f$ itself. Let  
$$\gamma(f)=\sum_{\langle z\rangle\subset\mathbb{P}^1}\gamma_{\langle z\rangle}.$$
Finally, let $\delta(f)$ denote the number of infinite tails of critical orbits. Obviously, $\delta(f)\le 2d-2$. The following result, due to Epstein, refines the FSI.

\begin{proposition}[Refined FSI]\cite[Theorem 1]{Epstein99}\label{number}
Let $f$ be a rational map of degree at least $2$. Then $\gamma(f)\le\delta(f)$.  
\end{proposition}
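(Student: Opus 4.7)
The plan is to follow the classical Fatou--Shishikura scheme using infinitesimal quasiconformal deformations, but sharpened so that the local contribution of each nonrepelling cycle matches the refined weight $\gamma_{\langle z\rangle}$. The output is a linear injection from a ``legal deformation space'' of complex dimension $\gamma(f)$ into a quotient of dimension at most $\delta(f)$, produced by an Epstein-style transfer operator on meromorphic quadratic differentials.

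First I would realize the weights $\gamma_{\langle z\rangle}$ as complex dimensions of explicit local deformation spaces. An attracting or irrationally indifferent cycle admits a one-parameter Beltrami deformation constructed inside its linearizing coordinate (K\"onigs or Siegel disk), accounting for $\gamma=1$. For a parabolic cycle of degeneracy $\nu$, I would use \'Ecalle--Voronin / Fatou cylinder coordinates: each attracting petal and each repelling petal supports an independent one-parameter Beltrami deformation, and the r\'esidu it\'eratif $\mathfrak{I}_z(f)$ records the glueing. When $\mathrm{Re}\,\mathfrak{I}_z(f)>0$ (parabolic-repelling) the cycle only supports $\nu$ essentially independent deformations, whereas when $\mathrm{Re}\,\mathfrak{I}_z(f)\le 0$ an extra compatible cylinder deformation survives, giving $\nu+1$. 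Summing over cycles produces a space $V(f)$ of dimension exactly $\gamma(f)$.

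Next I would construct the target side using the transfer operator $\nabla f=f_*$ on integrable meromorphic quadratic differentials on $\mathbb{P}^1$. The classical pairing $\langle\mu,q\rangle=\iint \mu\,q$ between Beltrami differentials and quadratic differentials descends to a pairing between $V(f)$ and the cokernel of $\mathrm{id}-\nabla f$. By standard contraction/compactness arguments, a nontrivial class in this cokernel must be supported, up to $\nabla f$-cohomology, on the grand orbits of critical points; moreover only those critical orbits having infinite forward tails (the ones counted by $\delta(f)$) can carry a nonzero contribution. Thus the codimension of the effective image of $\mathrm{id}-\nabla f$ is at most $\delta(f)$.

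The last step is injectivity: any element of $V(f)$ whose associated Beltrami class lifts to a trivial deformation of $f$ modulo M\"obius conjugation would yield a nontrivial holomorphic self-conjugacy of the local dynamics at the corresponding cycle, forbidden by its nonrepelling normal form. Combining this with the previous step gives $\gamma(f)\le\delta(f)$.

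The main obstacle, as in Epstein's original argument, is the parabolic case: one must show rigorously that the petal/cylinder deformations assemble into linearly independent global classes, that the r\'esidu it\'eratif really does kill exactly one direction in the parabolic-repelling regime, and that the transfer operator correctly ``sees'' these classes against critical-orbit-supported quadratic differentials without over- or under-counting. The rest of the argument is a bookkeeping exercise around the residue theorem identities $\sum \mathfrak{m}_z(f)=d+1$ and $\sum \mathfrak{i}_z(f)=1$ already recalled above.
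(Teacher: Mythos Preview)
The paper does not prove this proposition; it is simply quoted as \cite[Theorem 1]{Epstein99} and used as a black box throughout. So there is no proof in the paper to compare your attempt against.

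That said, your sketch is broadly in the spirit of Epstein's actual argument in the cited reference: one builds, for each nonrepelling cycle, a local space of invariant Beltrami (or divergence-type) deformations whose dimension equals $\gamma_{\langle z\rangle}$, and then uses the pushforward operator on integrable quadratic differentials to bound the global dimension by the number $\delta(f)$ of infinite critical tails. Your identification of the parabolic case as the crux is correct; that is exactly where the r\'esidu it\'eratif enters and where the distinction between $\nu$ and $\nu+1$ must be justified. As you yourself flag, your write-up is a plan rather than a proof: the assertions that the petal deformations are globally independent, that parabolic-repelling kills precisely one direction, and that the cokernel of $\mathrm{id}-f_*$ is bounded by $\delta(f)$ each require substantial work that you have not carried out. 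For the purposes of this paper none of that is needed, since the result is imported wholesale from Epstein.
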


\subsection{Restrictions on $\widehat{N}$ via refined FSI}
\label{subsecn:restrictions}
We now combine the FSI with the possible degeneracies of our normalized quartic Newton maps to obtain some dynamical restrictions on the limits. 
To set up the statement, for a rational map $f$, we denote $\mathrm{Fix_{sa}}(f)$ and $\mathrm{Fix_a}(f)$ the set of superattracting fixed points and the set of attracting fixed points, respectively.  

Now suppose $\{(r_k,s_k)\}\subset\mathrm{nm}_4^\#$ is a sequence converging to $(r,s)\in \Delta \cap \partial  \mathrm{nm}_4^{\#}$. Let $N=H_N\widehat{N}$ be the limit of $N_{\{0,1,r_k,s_k\}}$. Then the possibilities of $(r,s)$ in section \ref{lifting} give us the following restrictions on $\widehat{N}$.
\begin{table}[h!]
\begin{center}
\bgroup
\def\arraystretch{1.5}%
    \begin{tabular}{| l | l | l | l | l | l |}
    \hline
     & $\deg\widehat{N}$ & $\#\mathrm{Fix_{sa}}(\widehat{N})$ & $\#\mathrm{Fix_a}(\widehat{N})$ & $\gamma(\widehat{N})$ & $\delta(\widehat{N})$\\ \hline
    type 1 & 3 & 2 & 1 & 1 or 2 & 1 or 2\\ \hline
    type 2 & 2 & 0 & 2 & 2 & 2\\ \hline
   type 3 & 2 & 1 & 1 & 1 & 1\\
    \hline
    \end{tabular}
    \egroup
\end{center}
\caption{Epstein's invariants for $\widehat{N}$.}
\label{table invariants}
\end{table}

\subsection{Restrictions on limiting cycles via refined FSI} 
\label{subsecn:constraints_via_FSI}
We begin with the following general elementary result.
\begin{lemma}\label{par}
Let $f$ and $f_k$ be analytic on a compact set $K\subset\mathbb{C}$ such that $f_k\to f$ uniformly on $K$. Let $\langle z_k\rangle$ and $\langle w_k\rangle$ be two distinct cycles of $f_k$ in $K$. Let $\Gamma_1$ and $\Gamma_2$ be the limits of $\langle z_k\rangle$ and $\langle w_k\rangle$, respectively. Then $\Gamma_1, \Gamma_2$ are cycles of $f$.  If $\Gamma_1\cap\Gamma_2\not=\emptyset$, then $\Gamma_1=\Gamma_2$ is a parabolic cycle of $f$.   
\end{lemma}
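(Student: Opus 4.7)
The plan is a standard Hurwitz argument applied to the displacement function $f^N - \mathrm{id}$. Throughout, by passing to a subsequence I will assume the periods $n_1$ and $n_2$ of $\langle z_k \rangle$ and $\langle w_k \rangle$ are constant, and that every sequence $\{f_k^i(z_k)\}_{k}$ and $\{f_k^j(w_k)\}_k$ converges (this is possible since $K$ is compact).

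First I would verify that $\Gamma_1$ and $\Gamma_2$ are cycles of $f$. If $z_k^{(i)} := f_k^i(z_k) \to \zeta^{(i)}$, then by uniform convergence of $f_k$ to $f$ on $K$,
\[
f(\zeta^{(i)}) = \lim_{k\to\infty} f_k(z_k^{(i)}) = \lim_{k\to\infty} z_k^{(i+1)} = \zeta^{(i+1)}.
\]
Hence $\Gamma_1 = \{\zeta^{(0)},\ldots,\zeta^{(n_1-1)}\}$ is forward-invariant and finite under $f$, so it is a (single) cycle whose period $p_1$ divides $n_1$; similarly for $\Gamma_2$ with period $p_2 \mid n_2$. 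Since two distinct cycles of a fixed map $f$ are automatically disjoint, $\Gamma_1 \cap \Gamma_2 \neq \emptyset$ forces $\Gamma_1 = \Gamma_2$ and in particular $p_1 = p_2 =: p$.

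For the parabolic assertion, fix $z \in \Gamma_1 = \Gamma_2$ and set $N := n_1 n_2$, so every point of $\langle z_k\rangle \cup \langle w_k\rangle$ is fixed by $f_k^N$. Choose indices with $z_k^{(i_0)} \to z$ and $w_k^{(j_0)} \to z$; write $a_k := z_k^{(i_0)}$ and $b_k := w_k^{(j_0)}$. Because $\langle z_k \rangle$ and $\langle w_k \rangle$ are distinct cycles of $f_k$, they are disjoint, so $a_k \neq b_k$ for all $k$, while both sequences converge to $z$. The displacement functions
\[
g_k(\zeta) := f_k^{\,N}(\zeta) - \zeta, \qquad g(\zeta) := f^{N}(\zeta) - \zeta
\]
are holomorphic on a neighborhood of $K$ (shrinking $K$ slightly if necessary so that the iterates stay in the domain of analyticity), and $g_k \to g$ uniformly near $z$ by iterated uniform convergence and Cauchy estimates for derivatives.

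The key step is Hurwitz's theorem applied to $g_k \to g$ on a small disk $D$ around $z$. Since each $g_k$ has (at least) the two distinct zeros $a_k, b_k \in D$ for $k$ large, and these both limit to $z$, the function $g$ must either vanish identically near $z$ or have a zero of multiplicity $\geq 2$ at $z$. The identically-vanishing case is excluded because in the applications $f$ is a rational map of degree $\geq 2$, hence $f^N$ is not the identity. Therefore $g(z) = g'(z) = 0$, i.e. $(f^N)'(z) = 1$. By the chain rule along the $f$-orbit, $(f^N)'(z) = ((f^p)'(z))^{N/p}$, so the multiplier $\rho := (f^p)'(z)$ satisfies $\rho^{N/p} = 1$; thus $\rho$ is a root of unity and $\Gamma_1 = \Gamma_2$ is a parabolic cycle of $f$. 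The only real care required is the passage to a subsequence so that periods stabilize and all the relevant orbit sequences converge, plus the standing assumption that no iterate of $f$ is the identity; neither is a genuine obstacle in the rational setting where the lemma will be used.
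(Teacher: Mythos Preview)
Your proof is correct and follows essentially the same approach as the paper: both arguments show that the displacement function $f^{n_1 n_2}(\zeta)-\zeta$ has a multiple zero at the common limit point, forcing $(f^{n_1 n_2})'(z)=1$ and hence a root-of-unity multiplier. The only cosmetic difference is that you invoke Hurwitz's theorem directly, whereas the paper writes out the explicit factorization $f_k^{n_1 n_2}(\zeta)-\zeta=\prod(\zeta-z_k^{(i)})\prod(\zeta-w_k^{(j)})h_k(\zeta)$ and passes to the limit; these are the same idea.
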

\begin{proof}
Uniform convergence implies $\Gamma_1$ and $\Gamma_2$ are cycles of $f$. So they are same if they have nonempty intersection. Now we show this cycle is parabolic. Let $n_1$ and $n_2$ be the periods of $\langle z_k\rangle$ and $\langle w_k\rangle$, respectively. Assume $z^{(i)}_k$ converges to $z^{(i)}$ and $w^{(i)}_k$ converges to $w^{(i)}$. Suppose $z^{(0)}=w^{(0)}\in\Gamma_1\cap\Gamma_2$. Then there exists analytic functions $h_k(z)$ such that 
$$f_k^{n_1n_2}(z)-z=\prod_{i=0}^{n_1-1}(z-z^{(i)}_k)\prod_{j=0}^{n_2-1}(z-w^{(i)}_k)h_k(z).$$
Hence we have 
$$f^{n_1n_2}(z)-z=(z-z^{(0)})^2\prod_{i=1}^{n_1-1}(z-z^{(i)})\prod_{j=1}^{n_2-1}(z-w^{(i)})h(z)$$
for some holomorphic function $h(z)$. Thus $(f^{n_1n_2})'(z^{(0)})=1$. Note $(f_k^{n_1n_2})'(z_k^{(0)})$ converges to $(f^{n_1n_2})'(z^{(0)})$. Then $(f_k^{n_1})'(z^{(0)}_k)$ converges to $e^{2\pi im/n_2}$ for some $0\le m\le n_2-1$. Hence $(f^{n_1})'(z^{(0)})=e^{2\pi im/n_2}$. Let $p$ be the period of $\Gamma_1$, then $p\mid n_1$. Therefore, $\Gamma_1$ is parabolic. 
\end{proof}
If a rational map $f$ has an $n$-cycle with multiplier $\rho=e^{2\pi p/q}$ and degeneracy $\nu$, then a generic perturbation of $f$ splits this cycle into an $n$-cycle with multiplier close to $\rho$ and a $\nu$-tuple of $nq$-cycles with multipliers close to $1$. In particular,
\begin{lemma}\label{par-att-ind}\cite[Lemma 1]{Epstein00}
Let $f$ be analytic on $U \subset \mathbb{C}$ with a parabolic $n$-cycles $\langle z\rangle$ of multiplier $e^{2\pi ip/q}$. Let $f_k$ be analytic with $f_k\to f$ locally uniformly on $U$, and with $n$-cycle $\langle z_k^{[0]}\rangle$ and $nq$-cycles $\langle z_k^{[1]}\rangle, \dots \langle z_k^{[\nu]}\rangle$ converging to $\langle z\rangle$. If all $\langle z_k^{[j]}\rangle$ are attracting for $k$ sufficiently large then $\langle z\rangle$  is parabolic-attracting or parabolic-indifferent.
\end{lemma}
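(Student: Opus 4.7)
The plan is to pass to the iterate $F := f^{nq}$ (and $F_k := f_k^{nq}$) so that the parabolic cycle point $z^{(0)} \in \langle z\rangle$ becomes a fixed point of $F$ with multiplier $1$ and topological multiplicity $\mathfrak{m}_{z^{(0)}}(F) = \nu q + 1$, and then to compare the holomorphic index $\mathfrak{i}_{z^{(0)}}(F)$ to the sum of indices at the nearby fixed points of $F_k$. The conclusion we want is $\mathrm{Re}\,\mathfrak{I}_{z^{(0)}}(F) \le 0$, which, via the scaling relation $\mathfrak{I}_{z^{(0)}}(f^n) = q\,\mathfrak{I}_{z^{(0)}}(F)$, is exactly the statement that $\langle z\rangle$ is parabolic-attracting or parabolic-indifferent.

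First, I would enumerate the fixed points of $F_k$ clustering at $z^{(0)}$. The point of $\langle z_k^{[0]}\rangle$ that converges to $z^{(0)}$ is a simple fixed point of $F_k$ with multiplier $(\rho_k^{[0]})^q$, where $\rho_k^{[0]}$ is the multiplier of the attracting $n$-cycle $\langle z_k^{[0]}\rangle$. For each $j=1,\dots,\nu$, exactly $q$ of the $nq$ points of $\langle z_k^{[j]}\rangle$ converge to $z^{(0)}$, each giving a simple fixed point of $F_k$ with multiplier $\rho_k^{[j]}$. A Hurwitz / Rouché count on a small disk around $z^{(0)}$ shows that these $1+\nu q = \mathfrak{m}_{z^{(0)}}(F)$ fixed points exhaust all fixed points of $F_k$ in that disk for $k$ large.

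Next, applying Cauchy's integral formula to $\mathfrak{i}_{z^{(0)}}(F)$ on the boundary of this disk and passing to the limit under $F_k \to F$ gives
$$\mathfrak{i}_{z^{(0)}}(F) = \lim_{k\to\infty}\left[\frac{1}{1-(\rho_k^{[0]})^q} + \sum_{j=1}^{\nu}\frac{q}{1-\rho_k^{[j]}}\right].$$
Since every cycle is attracting for large $k$, all $1+\nu q$ multipliers appearing on the right lie in the open unit disk. The Möbius transformation $\rho \mapsto 1/(1-\rho)$ sends $\{|\rho|<1\}$ biholomorphically onto the half-plane $\{\mathrm{Re}(w)>1/2\}$, so each of the $1+\nu q$ terms has real part strictly greater than $1/2$. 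Summing and taking the limit (which preserves weak inequalities on real parts) yields $\mathrm{Re}\,\mathfrak{i}_{z^{(0)}}(F) \ge (1+\nu q)/2 = \mathfrak{m}_{z^{(0)}}(F)/2$, i.e.\ $\mathrm{Re}\,\mathfrak{I}_{z^{(0)}}(F) \le 0$, finishing the proof.

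The main technical subtlety is the limit step: individual indices $1/(1-\rho)$ can blow up as the multipliers accumulate on the unit circle (which they must, since $(\rho_k^{[0]})^q \to 1$ and each $\rho_k^{[j]} \to 1$), so one cannot argue termwise. What saves the day is that every term lies in the \emph{same} half-plane $\{\mathrm{Re}(w)>1/2\}$, so positivity of the real-part contributions lets the bound survive under a limit even when magnitudes diverge. A secondary point to check carefully is that the listed points really are all of the nearby fixed points of $F_k$ and that the convergence hypothesis $\langle z_k^{[j]}\rangle \to \langle z\rangle$ distributes the $nq$ points of each cycle evenly, $q$ to each point of $\langle z\rangle$; both follow from Hurwitz applied to $F_k - \mathrm{id}$ on small disks around each of the $n$ points of $\langle z\rangle$.
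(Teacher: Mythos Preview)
The paper does not supply its own proof of this lemma; it is quoted directly from \cite[Lemma 1]{Epstein00}. Your argument is essentially Epstein's original one and is correct: pass to $F=f^{nq}$, use Hurwitz/Rouch\'e to account for the $1+\nu q$ simple fixed points of $F_k$ near $z^{(0)}$, express $\mathfrak{i}_{z^{(0)}}(F)$ as the limit of the contour integral and hence of the sum of the individual indices, and use that $\rho\mapsto 1/(1-\rho)$ sends $\{|\rho|<1\}$ into $\{\mathrm{Re}\,w>1/2\}$ to conclude $\mathrm{Re}\,\mathfrak{i}_{z^{(0)}}(F)\ge(1+\nu q)/2$.

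One small cosmetic point: the scaling relation you invoke, $\mathfrak{I}_{z^{(0)}}(f^n)=q\,\mathfrak{I}_{z^{(0)}}(F)$, is not literally well-posed, since the r\'esidu it\'eratif $\mathfrak{I}_{z^{(0)}}(f^n)$ is only defined when the multiplier of $f^n$ at $z^{(0)}$ equals $1$, whereas here it is $e^{2\pi ip/q}$. Fortunately you do not need this relation at all: the parabolic-attracting/indifferent/repelling trichotomy for a cycle of multiplier $e^{2\pi ip/q}$ is \emph{defined} via the sign of $\mathrm{Re}\,\mathfrak{I}_{z^{(0)}}(f^{nq})=\mathrm{Re}\,\mathfrak{I}_{z^{(0)}}(F)$, so your inequality $\mathrm{Re}\,\mathfrak{I}_{z^{(0)}}(F)\le 0$ is already the desired conclusion.
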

For degenerating Newton maps with two distinct free (super)attracting cycles, we obtain constraints on limits of cycles via the following. To set up the statement, we place ourselves in the setting of Lemma \ref{lemma:semialg}, and let $\mathcal{H}^\#$ be the lift of a hyperbolic component $\mathcal{H}\subset\mathrm{nm}_4$ of type D. The proposition below says that at least one of the free (super)attracting cycles collides with a root at a hole. 

\begin{proposition}\label{intersection}
Let $\{(r_k,s_k)\}\subset\mathcal{H}^\# \subset \mathrm{nm}_4^\#$ be a degenerating sequence converging to $(r,s)\in \Delta\cap\partial\mathrm{nm}_4^{\#}$. Let $N=H_N\widehat{N}$ be the limit of $N_{\{0,1,r_k,s_k\}}$ in $\overline{\mathrm{Rat}}_4$. Let $\langle z_k\rangle$ and $\langle w_k\rangle$ be the free (super)attracting cycles of $N_{\{0,1,r_k,s_k\}}$. Let $\Gamma_1$ and $\Gamma_2$ be the limits of $\langle z_k\rangle$ and $\langle w_k\rangle$, respectively. Then there exists $i\in\{1,2\}$ such that $\Gamma_{i}\cap\mathrm{Hole}(N)\not=\emptyset$. 

\end{proposition}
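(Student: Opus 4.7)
My plan is to prove the contrapositive by contradiction: assuming $\Gamma_1 \cap \mathrm{Hole}(N) = \Gamma_2 \cap \mathrm{Hole}(N) = \emptyset$, I will derive a contradiction via the refined Fatou--Shishikura inequality applied to $\widehat{N}$. Since $\Gamma_1 \cup \Gamma_2 \subset \mathbb{P}^1 \setminus \mathrm{Hole}(N)$, Lemma \ref{convergence} gives uniform convergence $N_{\{0,1,r_k,s_k\}} \to \widehat{N}$ on a compact neighborhood of $\Gamma_1 \cup \Gamma_2$. Lemma \ref{par} then identifies $\Gamma_1, \Gamma_2$ as cycles of $\widehat{N}$, and the continuity of multipliers under uniform convergence makes them non-repelling, since $\langle z_k\rangle, \langle w_k\rangle$ are (super)attracting.

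Next I would show $\Gamma_1, \Gamma_2$ have period at least $2$ as cycles of $\widehat{N}$, i.e., they do not collapse onto any fixed point of $\widehat{N}$. Inspecting Table \ref{table invariants} together with the fact that $\infty$ is a repelling fixed point of $\widehat{N}$, every fixed point of $\widehat{N}$ has multiplier of modulus strictly different from $1$. Hence $\widehat{N}$ is locally strictly contracting or strictly expanding on a small disk around each such point, and the uniform convergence transfers the same property to $N_k$ for $k$ large. No $N_k$-cycle of period $\geq 2$ can be trapped in such a disk; since both $\langle z_k\rangle$ and $\langle w_k\rangle$ have period $\geq 2$ by the type D hypothesis, neither $\Gamma_1$ nor $\Gamma_2$ can collapse onto a fixed point of $\widehat{N}$.

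The contradiction now follows from a critical-orbit count. In types 2 and 3 of Table \ref{table invariants}, $\widehat{N}$ is quadratic with only two critical points, and these are already consumed (as basin capture or superattracting cycle containment) by the two non-repelling fixed points of $\widehat{N}$; Fatou's theorem then leaves no critical orbit to support the additional non-repelling cycle $\Gamma_1$, a contradiction. In type 1 with $\Gamma_1 \neq \Gamma_2$, the cubic $\widehat{N}$ has four critical points, of which two lie at the superattracting fixed points $\{1\}$ and $\{s\}$; the three further non-repelling cycles $\{0\}, \Gamma_1, \Gamma_2$ would demand three additional critical orbits, but only two are available.

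The subtlest case, and where I expect the main obstacle to lie, is $\Gamma_1 = \Gamma_2$ in type 1, where the naive count narrowly fits ($4$ non-repelling cycles for $4$ critical points). To close this case I would invoke Lemma \ref{par-att-ind}: since both $\langle z_k\rangle$ and $\langle w_k\rangle$ are (super)attracting and their limit cycles coincide, the common limit $\Gamma_1 = \Gamma_2$ is parabolic-attracting or parabolic-indifferent with degeneracy $\nu \geq 1$, contributing $\gamma_{\Gamma_1} = \nu + 1 \geq 2$. Together with the attracting fixed point at the hole (contributing $1$), this forces $\gamma(\widehat{N}) \geq 3$, while Table \ref{table invariants} bounds $\delta(\widehat{N}) \leq 2$ in type 1, violating Proposition \ref{number} and completing the proof.
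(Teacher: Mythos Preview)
Your proof is correct and follows essentially the same approach as the paper: assume both $\Gamma_i$ avoid the holes, identify them as non-repelling cycles of $\widehat{N}$ via locally uniform convergence, and derive a contradiction from the refined Fatou--Shishikura inequality, invoking Lemmas~\ref{par} and~\ref{par-att-ind} for the coincidence case. The paper organizes the casework by whether $\Gamma_1\cap\Gamma_2$ is empty, whereas you organize by degeneration type and then split off $\Gamma_1=\Gamma_2$ only in type~1; both routes arrive at the same estimates $\gamma(\widehat N)>\delta(\widehat N)$.

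Two small remarks. First, your step~3 (ruling out collapse to a fixed point) is actually more explicit than the paper, which simply asserts that neither $\Gamma_i$ is a (super)attracting fixed point; your contraction/expansion argument near each fixed point is a clean justification. Second, in the $\Gamma_1=\Gamma_2$ case your appeal to Lemma~\ref{par-att-ind} is slightly loose, since that lemma as stated asks for the full collection of one $n$-cycle and $\nu$ many $nq$-cycles to be attracting, whereas you have only two attracting cycles. This does not affect your conclusion $\gamma_{\Gamma_1}\ge 2$: if $\nu\ge 2$ then $\gamma_{\Gamma_1}\ge\nu\ge 2$ regardless of the parabolic type, while if $\nu=1$ the two split cycles are exactly $\langle z_k\rangle$ and $\langle w_k\rangle$ and the lemma applies directly. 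The paper glosses over the same point.
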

\begin{proof}Write $N=H_N\widehat{N}$. Suppose $\Gamma_i\cap\mathrm{Hole}(N)=\emptyset$ for $i=1,2$. Without loss of generality, we may assume $(r,s)$ is of type 1, type 2 or type 3.\par

\noindent{\bf Case 1:} $\mathbf{\Gamma_1\cap\Gamma_2\not=\emptyset}$.  Let $U$ be a small neighborhood of $\Gamma_1\cup\Gamma_2$ such that $\overline{U}\cap\mathrm{Hole}(N)=\emptyset$. Then $N_{\{0,1,r_k,s_k\}}\to\widehat N$ uniformly on $\overline{U}$. By Lemmas \ref{par} and \ref{par-att-ind}, we know $\Gamma_1=\Gamma_2$ is a parabolic-attracting or parabolic-indifferent cycle $\Gamma$ of $\widehat N$. Since $\widehat{N}$ has at least one attracting fixed point, we have 
$$\gamma(\widehat{N})>\gamma_{\Gamma}\ge 2.$$
which contradicts Table \ref{table invariants} of Section \ref{subsecn:restrictions}.\par
\noindent{\bf Case 2:} $\mathbf{\Gamma_1\cap\Gamma_2=\emptyset}$. In this case $\Gamma_1$ and $\Gamma_2$ are both non-repelling and neither of them is a (super)attracting fixed point. 

If both of them are not superattracting, again, the existence of attracting fixed points implies 
$$\gamma(\widehat{N})>\gamma_{\Gamma_1}+\gamma_{\Gamma_2}\ge 2$$
which again contradicts Table \ref{table invariants}.
Now suppose $\Gamma_1$ is superattracting. If $(r,s)$ is of type $1$, we have 
$$\gamma(\widehat{N})\ge\gamma_{
\langle 0\rangle}+\gamma_{\Gamma_2}\ge 1+\gamma_{\Gamma_2}.$$
However, in this case $\delta(\widehat{N})=1$. Thus $\gamma_{\Gamma_2}=0$. If $(r,s)$ is of type $2$ or $3$, then $\gamma_{\Gamma_2}=0$. Hence $\Gamma_2$ is also superattracting, contradicting Table \ref{table invariants}.\par
Thus at least one of the $\Gamma_i$s intersects with $\mathrm{Hole}(N)$. 
\end{proof}

\subsection{Restrictions on limits of critical orbits via analytical estimates}
\label{subsecn:lco}

Suppose $N=H_N\widehat{N}$ is a degenerate Newton map for which two or more roots have collided and $\deg(\widehat{N}) \geq 2$. Then  $N$ has a hole that is an attracting fixed point of $\widehat{N}$.  The following general result about  fixed-points colliding with attracting cycles constrains the orbits of the critical points in a somewhat general setting. The idea is due to Epstein, see \cite[Proposition 5]{Epstein00}.  Our proof below replaces Epstein's analytical argument (specifically, his use of his Lemma 5) with a topological one.

\begin{proposition}\label{Fatou-size}
Let $\{f_k\}$ be a sequence of degree $d\ge 2$ rational maps such that $f_k$ converges to $f=H_f\hat f$. Assume $\deg\hat f\ge 2$ and $0\in\mathrm{Hole}(f)$ is a 
fixed point of $\hat f$. Let $\langle z_k\rangle$ be a (super)attracting cycle of period $n\ge 2$ and let $\Omega^{(\ell)}_k$ be the Fatou component containing $z^{(\ell)}_k$. Suppose that $z_k^{(\ell)}\to z^{(\ell)}$ for $\ell=0,\cdots, \ell-1$ with $z^{(0)}=0$ and $z^{(i)}\not=0$ for some $1\le i\le n-1$. Then 
\begin{enumerate}
\item $\Omega^{(0)}_k$ converges to $0$ in the sense that for each $\epsilon>0$, $\Omega^{(0)}_k \subset \mathbb{D}_\epsilon:=\{|z|<\epsilon\}$ for all $k$ sufficiently large, and 
\item there exists a neighborhood $V$ of $0$ such that $\Omega^{(i)}_k\cap V=\emptyset$ for $k$ sufficiently large.
\end{enumerate}
\end{proposition}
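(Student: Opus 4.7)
The plan is to argue by contradiction using two ingredients: Lemma~\ref{convergence}, giving $f_k\to\hat f$ locally uniformly on $\mathbb{P}^1\setminus\mathrm{Hole}(f)$, and connectedness of the Fatou components $\Omega_k^{(\ell)}$, which $f_k$ cyclically permutes. First I would set up a generic test radius. Since $\hat f$ has degree at least $2$ and fixes $0$, the set $E:=\bigcup_{\ell=0}^{n}\hat f^{-\ell}(\mathrm{Hole}(f)\cup\{0\})$ is finite. Choose $\delta>0$ with $\delta<|z^{(i)}|/2$, with $\mathbb{D}_{2\delta}\cap\mathrm{Hole}(f)\subseteq\{0\}$, and with $\partial\mathbb{D}_\delta\cap E=\emptyset$. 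For every $w\in\partial\mathbb{D}_\delta$ the $\hat f$-orbit of $w$ through $n$ steps avoids $\mathrm{Hole}(f)$, so $f_k^{\circ\ell}\to\hat f^{\circ\ell}$ uniformly on $\partial\mathbb{D}_\delta$ for each $\ell\le n$.

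Assuming (1), I would establish (2) as follows. If $\Omega_k^{(i)}\cap\mathbb{D}_\delta\neq\emptyset$ along a subsequence, connectedness of $\Omega_k^{(i)}$ (which contains $z_k^{(i)}\to z^{(i)}\notin\overline{\mathbb{D}_{2\delta}}$) gives $w_k\in\Omega_k^{(i)}\cap\partial\mathbb{D}_\delta$ with $w_k\to w$, $|w|=\delta$. Uniform convergence yields $f_k^{\circ(n-i)}(w_k)\to\hat f^{\circ(n-i)}(w)$; but $f_k^{\circ(n-i)}(w_k)\in\Omega_k^{(0)}\subset\mathbb{D}_\varepsilon$ for every $\varepsilon>0$ by (1), forcing $\hat f^{\circ(n-i)}(w)=0$, contradicting $w\notin E$.

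For (1), I would run a parallel argument. Suppose $\Omega_k^{(0)}\not\subset\mathbb{D}_\delta$ along a subsequence. By connectedness, take $w_k\in\Omega_k^{(0)}\cap\partial\mathbb{D}_\delta$ and a path $\gamma_k\subset\Omega_k^{(0)}$ from $z_k^{(0)}$ to $w_k$. Its image $f_k^{\circ i}(\gamma_k)\subset\Omega_k^{(i)}$ connects $z_k^{(i)}$ (near $z^{(i)}\neq 0$) to $f_k^{\circ i}(w_k)\to\hat f^{\circ i}(w)$. Truncating $\gamma_k$ outside $\mathbb{D}_{\delta'}$ for small $\delta'<\delta$, and passing to Hausdorff subsequential limits first as $k\to\infty$ and then $\delta'\to 0$, and using the continuous extension $\hat f(0)=0$, produces in the Hausdorff limit of $\Omega_k^{(i)}$ a continuum containing both $0$ and $z^{(i)}$. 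Hence $\Omega_k^{(i)}$ meets every neighborhood of $0$; combined with a reverse application of the second paragraph's argument, this produces the desired contradiction.

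The main obstacle is the Hausdorff-limit step in the proof of (1): because $\hat f$ is undefined at the hole $0$, the behavior of $f_k^{\circ i}$ on the portion of $\gamma_k$ near $z_k^{(0)}\to 0$ is not directly controlled by Lemma~\ref{convergence}. One must exploit $z_k^{(\ell)}\to z^{(\ell)}$ together with $\hat f(0)=0$ to rigorously glue the truncated image continua into a single limiting continuum passing through $0$, and then carefully arrange the two branches of the contradiction argument so that (1) and (2) are proved by a single joint contradiction rather than via circular reasoning.
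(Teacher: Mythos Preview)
Your argument for (2) assuming (1) is correct. The genuine gap is in (1), and it is precisely the one you flag: the Hausdorff-limit step never closes into a contradiction. Under the hypothesis that $\Omega_k^{(0)}\not\subset\mathbb{D}_\delta$ along a subsequence, what your construction actually yields is only that $\Omega_k^{(i)}$ contains points arbitrarily close to $0$ while also containing $z_k^{(i)}\to z^{(i)}\neq 0$; in other words, $\Omega_k^{(i)}$ is \emph{stretched}. But this contradicts nothing. Applying $f_k^{n-i}$ to a point of $\Omega_k^{(i)}\cap\partial\mathbb{D}_\delta$ merely produces a point of $\Omega_k^{(0)}$ converging to some $\hat f^{\,n-i}(u)\neq 0$, which is perfectly consistent with the assumed failure of (1). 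No amount of ping-ponging forward between $\Omega_k^{(0)}$ and $\Omega_k^{(i)}$ turns ``both components are large'' into ``some component is small''; your joint-contradiction scheme has no terminating step.

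The paper avoids this by working with \emph{preimage components anchored at a fixed disk}, and by exploiting the forward contraction of $\hat f$ at the fixed point $0$. Writing $n=i+j$, choose a small neighborhood $V\ni 0$ and then, using only $\hat f^{\,i}(0)=0$ and continuity, a radius $r>0$ with $\mathbb{D}_r\Subset V$ and $\hat f^{\,i}(\mathbb{D}_r)\Subset V$. Let $B_k$ be the component of $f_k^{-j}(\mathbb{D}_r)$ through $z_k^{(i)}$; convergence off holes makes $B_k$ close to the corresponding $\hat f^{-j}$-component through $z^{(i)}$, hence disjoint from $V$. Let $W_k$ be the component of $f_k^{-i}(B_k)$ through $z_k^{(0)}$. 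If $W_k\not\to 0$, a point $\zeta_k\in W_k$ with $0<s_1\le|\zeta_k|\le s_2<r$ would satisfy simultaneously $f_k^{\,i}(\zeta_k)\in B_k$ (by construction) and $f_k^{\,i}(\zeta_k)\to\hat f^{\,i}(\zeta)\in\hat f^{\,i}(\mathbb{D}_r)\subset V$, contradicting $B_k\cap V=\emptyset$. This is the separation your forward-image approach cannot manufacture. From $W_k\to 0$ one obtains (1) by nesting the components of $f_k^{-n\ell}(\mathbb{D}_r)$ through $z_k^{(0)}$ down to $\Omega_k^{(0)}$, and (2) follows at once from $\Omega_k^{(i)}=f_k^{\,i}(\Omega_k^{(0)})\subset f_k^{\,i}(W_k)\subset B_k$.
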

\begin{proof}
Write $n=i+j$. Then $f_k^{j}(z_k^{(i)})=z_k^{(0)}$.  Since $\deg\hat f\ge 2$, $f^\ell$ is well defined for all $\ell\ge 1$, see \cite{DeMarco05}. Pick a small neighborhood $V$ of $0$ such that $\overline{V}$ is away from nonzero holes of $f^n$ and nonzero $z^{(\ell)}$s.  We can further assume that $\overline{V}\cap\cup_{c\in\mathrm{Crit}(\hat f)}\{c,\cdots, \hat f^n(c)\}\subset\{0\}$ and $\overline{V}\cap\cup_{h\in\mathrm{Hole}(f)}\{\hat f(h),\cdots, \hat f^n(h)\}\subset\{0\}$. Now pick $r>0$ such that $0\in\mathbb{D}_r\Subset V$ and $\hat f^i(\mathbb{D}_r)\Subset V$. Let $B$ be the component of $\hat f^{-j}(\mathbb{D}_r)$ containing $z^{(i)}$. Shrinking $\mathbb{D}_r$ if necessary, we can suppose $\overline{V}$ and $B$ are disjoint. Let $B_k$ be the component of $f_k^{-j}(\mathbb{D}_r)$ containing $z_k^{(i)}$. Since $\partial B$ is away from the holes of $f$, from locally uniform convergence off holes, $B_k$ is in a small neighborhood of $B$ for sufficiently large $k$. Hence we have $V$ and $B_k$ are disjoint.\par
Let $W_k^n$ be the component of $f_k^{-i}(B_k)$ containing $z_k^{(0)}$. We claim that $W_k^n\to 0$. Otherwise, since $z_k^{(0)}\to 0$, passing to a subsequence if necessary, there exists $0<s_1<s_2<r$ such that $W_k^n\cap\{s_1\le |z|\le s_2\}\not=\emptyset$ for sufficiently large $k$. Let $z_k\in W_k^n\cap\{s_1\le |z|\le s_2\}$. Then $f_k^i(z_k)\in f_k^i(W_k^n)=B_k$. On the other hand, from locally uniform convergence off holes, $f_k^i(z_k)\in \hat f^i(\mathbb{D}_r)\subset V$. It is impossible.\par
Now let $\mathcal{W}_k^\ell$ be the component of $f_k^{-n\ell}(\mathbb{D}_r)$ containing $z_k^{(0)}$. Then $\mathcal{W}^1_k=W_k^n$ converges to $0$. Hence $\mathcal{W}^1_k\subset\mathbb{D}_r$ for sufficiently large $k$. It follows that $\mathcal{W}^{\ell+1}_k\subset\mathcal{W}^{\ell}_k$. Let $\mathcal{W}_k$ be the interior of the component of $\cap_{\ell\ge 0}\mathcal{W}_k^\ell$ containing $z^{(0)}_k$.  Since $\{f_k^{n\ell}\}$ is bounded, hence normal, on $\mathcal{W}_k$ and $\mathcal{W}_k$ is a connected open set, we have $\mathcal{W}_k\subset\Omega_k^{(0)}$. On the other hand, for any $z\in\Omega_k^{(0)}$, there exists an open neighborhood $U$ of $z$ such that $f_k^{n\ell}(U)\subset\mathbb{D}_r$ for sufficiently large $\ell$. Since $\Omega_k^{(0)}$ is a connected open set, we have $\Omega_k^{(0)}\subset\mathcal{W}_k$. Therefore, we have $\Omega_k^{(0)}=\mathcal{W}_k$. Thus $\Omega^{(0)}_k\to 0$.\par

Note $\Omega_k^{(i)}=f_k^i(\Omega_k^{(0)})\subset f_k^{i}(W^{n}_k)=B_k$ for $k$ sufficiently large. Since $B_k\cap V=\emptyset$. Thus $\Omega_k^{(i)}\cap V=\emptyset$.
\end{proof}

In the above Proposition \ref{Fatou-size}, the hypothesis that the entire cycle does not collide with the hole is necessary.
\begin{example}
Let $N_t=N_{\{-1, -t, 0, t, +1\}}$ and let $f_t=-N_t$. Let $f_0=H_{f_0}\hat f_0$ be the limit of $f_t$. As $t\to 0$,  the superattracting 2-cycle $\{-t, t\}$ converges to the hole $0$ of $f_0$, which is an attracting fixed point of $\hat f_0$. Note $N_t$ is odd and that $f_t\circ f_t=N_t\circ N_t$. Thus $f_t$ and $N_t$ have same Julia sets. Hence, the Fatou components containing $-t$ and $t$ for $f_t$ are unbounded.
\end{example}

\subsection{Classification of hyperbolic components}\label{type}

As in the cases of quadratic rational maps \cite{Milnor93} and of cubic polynomials \cite{Milnor09}, we can classify  hyperbolic components of quartic Newton maps according to the dynamics on the smallest forward-invariant set of basins containing the set of critical points. 
Focusing on the orbits of the additional critical points, we have the following seven types.  That each type arises can be justified by appealing to the classification theory of \cite{Lodge15a, Lodge15b}.  Below we simply give explicit formulas illustrating each case. \par
\textbf{Type A. Adjacent critical points,} with both additional critical points in the same component of the immediate basin of a free (super)attracting cycle.\par
\textbf{Type B. Bitransitive,} with both additional critical points in the immediate basin of a free (super)attracting period cycle, but they do not lie in the same component.\par 
\textbf{Type C. Capture,} with one additional critical point in the immediate basin of a free (super)attracting cycle, the other additional critical point in the basin but not the immediate basin of this cycle.\par
\textbf{Type D. Disjoint (super)attracting orbits,} with both additional critical points in the immediate basins of two distinct free (super)attracting cycles.\par
\textbf{Type IE. Immediate Escape,} with some additional critical point in the immediate basin of a superattracting fixed point.\par
\textbf{Type FE1. One Future Escape,} with one additional critical point in the basin (but not immediate basin) of a superattracting fixed point, while the other additional critical point is in the immediate basin of a free (super)attracting cycle.\par
\textbf{Type FE2. Two Future Escape,} with both additional critical points in the basins (but not immediate basins) of one or two superattracting fixed points.\par
In the remainder of this subsection, we give examples of hyperbolic quartic Newton maps in the components of each type. 

\begin{figure}[h!]
\centering
\begin{minipage}[t]{.5\textwidth}
  \centering
  \includegraphics[width=.9\linewidth]{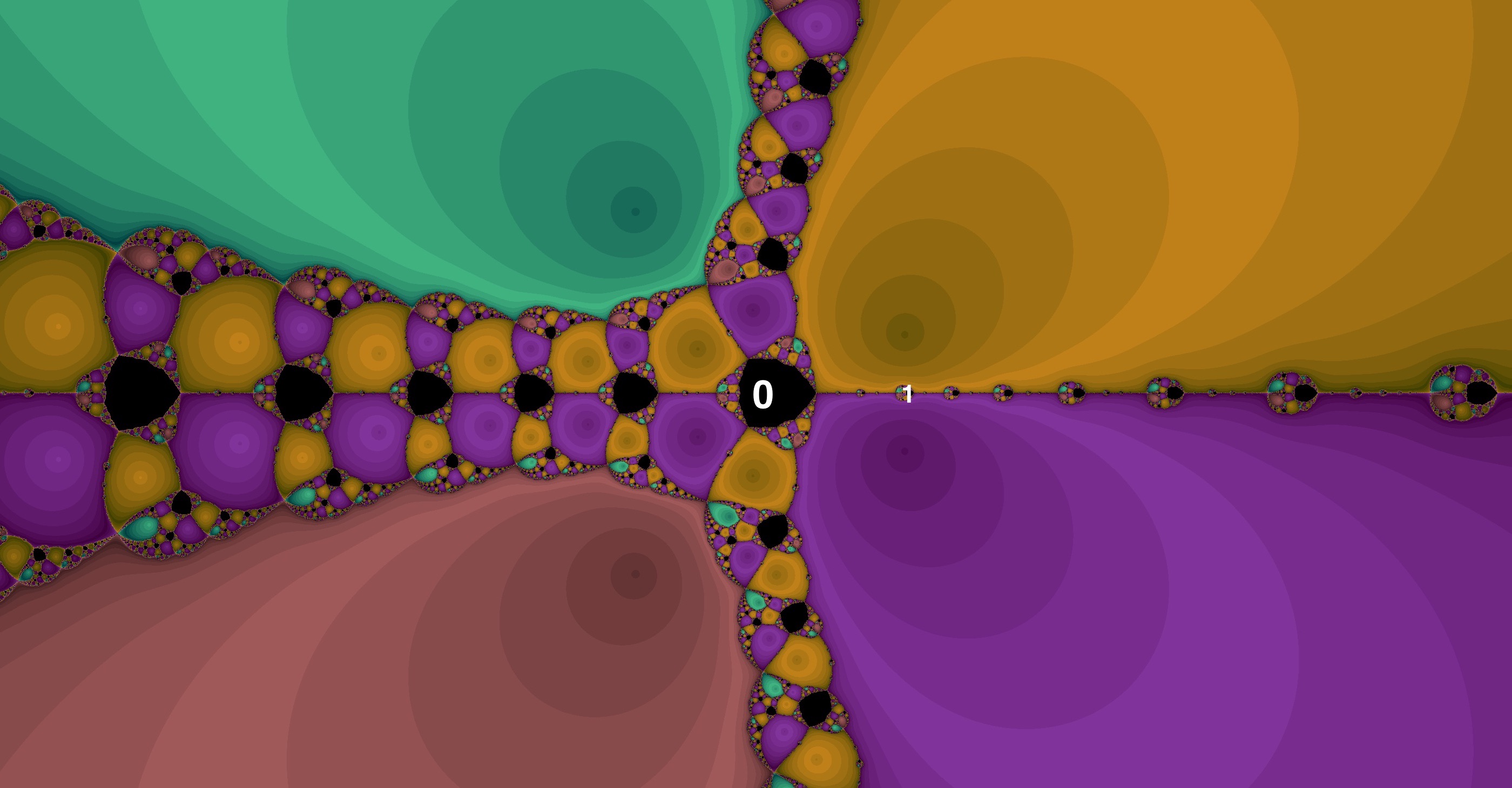}
  \caption{Type A: The Julia set of the Newton map for polynomial $P(z)=z^4/12-z/4+1/4$. The additional critical point $0$ is multiple with the cycle $0\to1\to 0$. }
  \end{minipage}%
\begin{minipage}[t]{.5\textwidth}
  \centering
  \includegraphics[width=.93\linewidth]{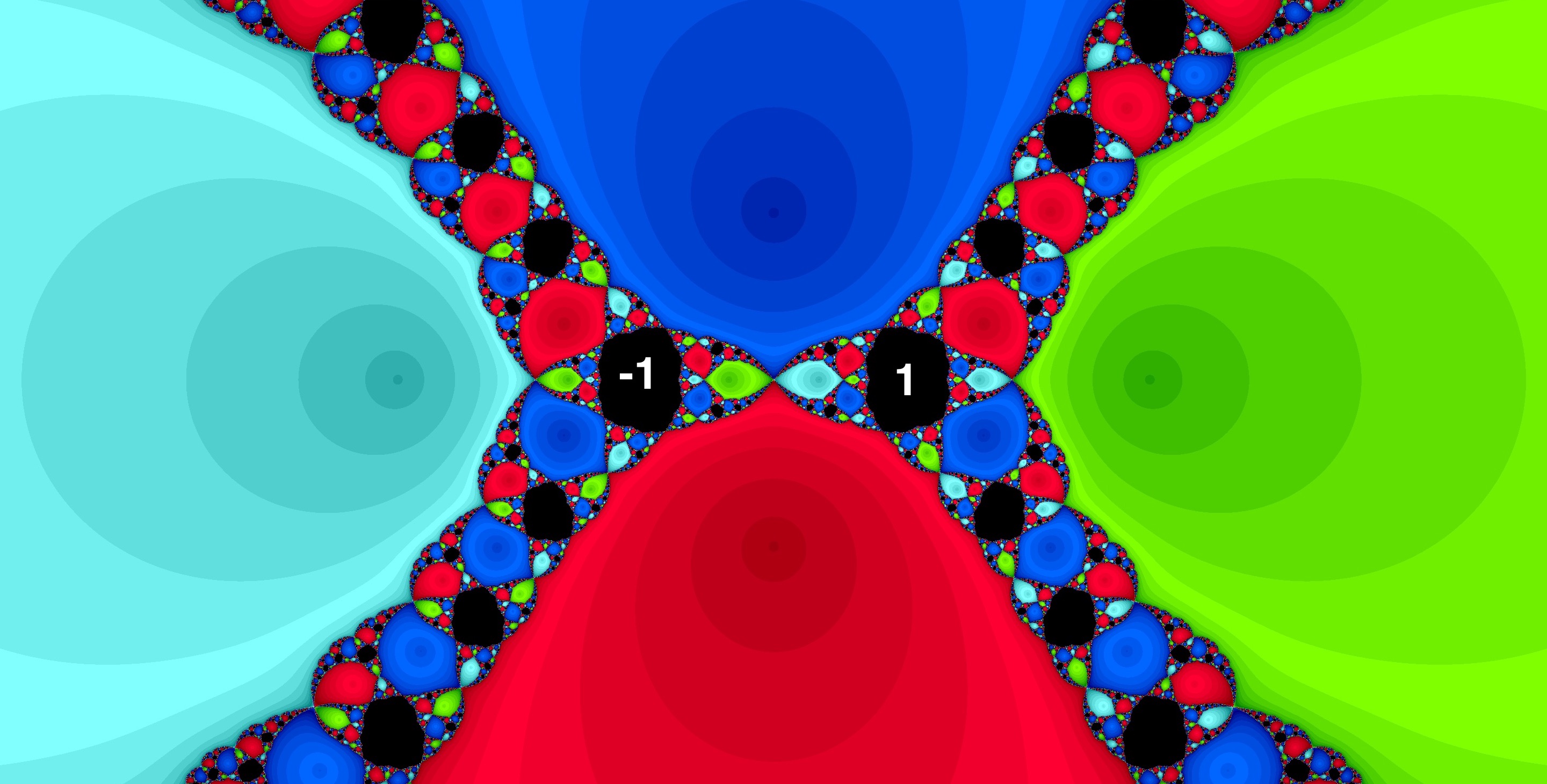}
  \caption{Type B: The Julia set of the Newton map for polynomial $P(z)=z^4/12-z^2/2+z-11/12$. The two additional critical points are $\pm1$ and they are in the two cycle $-1\leftrightarrow 1$.}
\end{minipage}
\end{figure}
\begin{figure}[h!]
\centering
\begin{minipage}[t]{.5\textwidth}
  \centering
  \includegraphics[width=.9\linewidth]{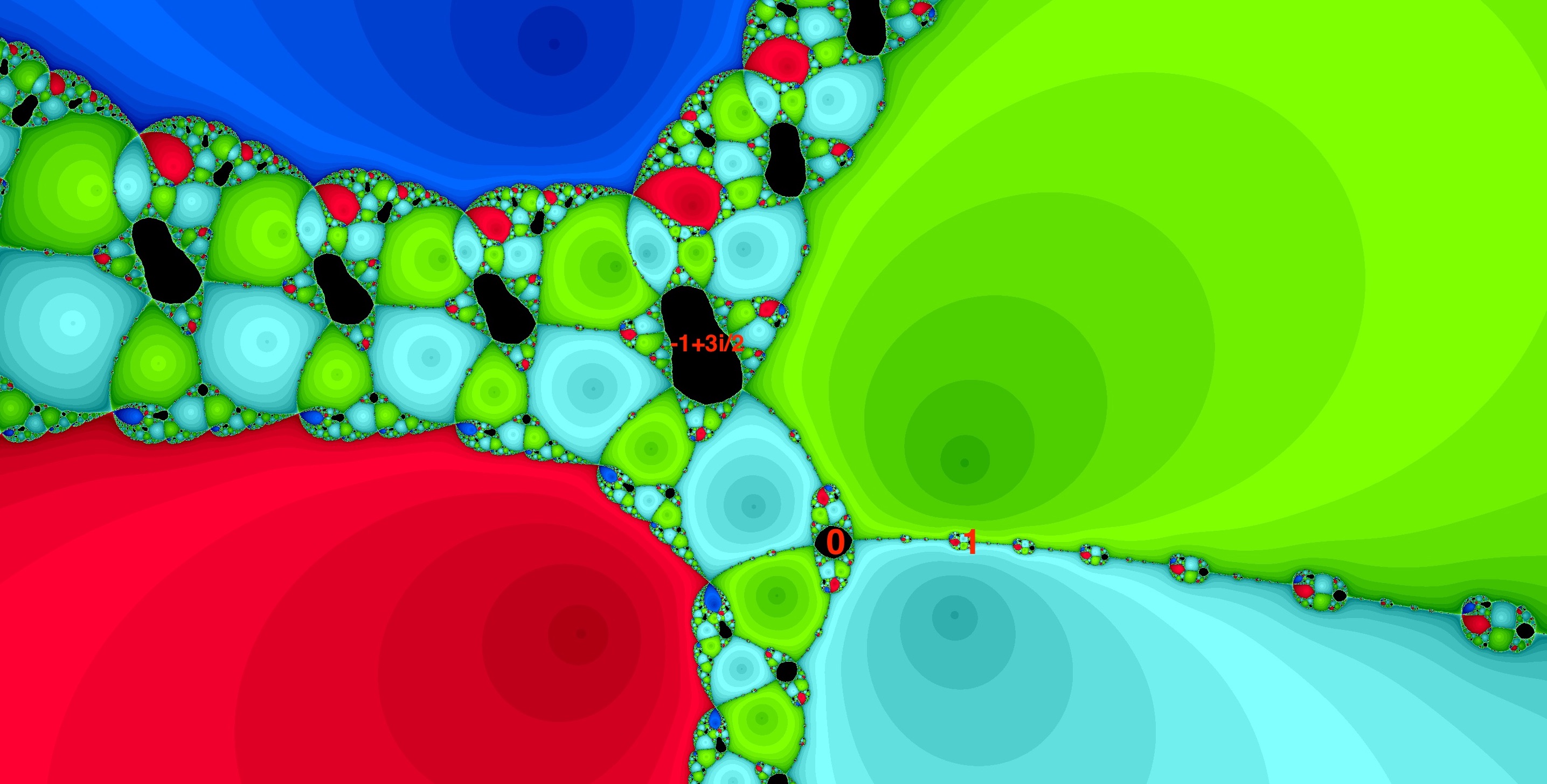}
  \caption{Type C: The Julia set of the Newton map for polynomial $P(z)=z^4/12+(1/6-i/4)z^3-(7/12-i/2)z+7/12-i/2$. The additional critical point $0$ is periodic with orbit $0\to 1\to 0$. The other additional critical point $-1+3i/2$ maps to the immediate basin of $0$.}
  \end{minipage}%
\begin{minipage}[t]{.5\textwidth}
  \centering
  \includegraphics[width=.9\linewidth]{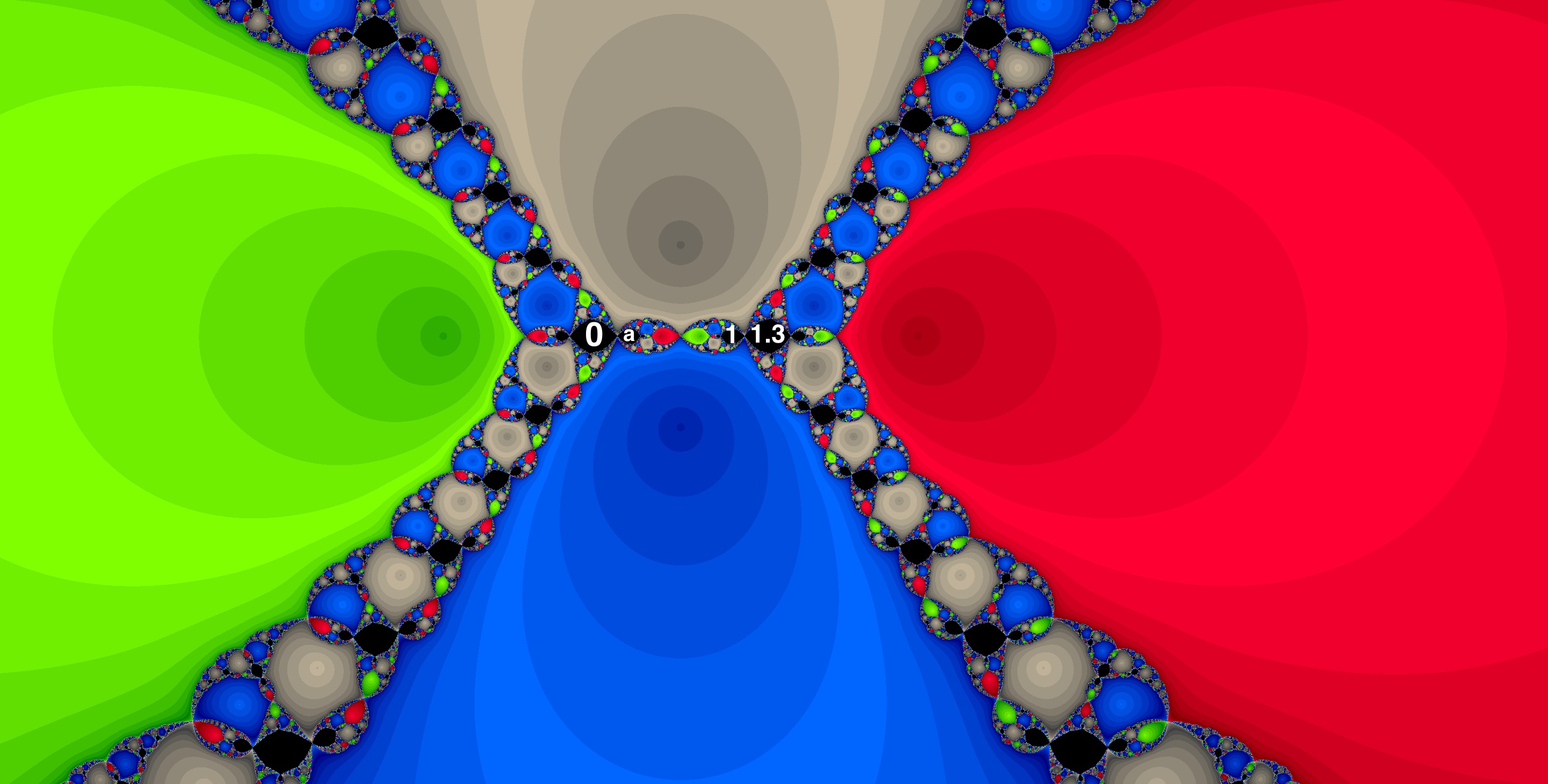}
  \caption{Type D: The Julia set of the Newton map for polynomial $P(z)=z^4/12-13z^3/60+(13/30-1/4)z-13/30+1/4$. The additional critical point $0$ is in the cycle $0\to1\to 0$ and the other additional critical point $1.3$ is in the cycle  $1.3\to a\to 1.3$.}
\end{minipage}
\end{figure}

\begin{figure}[h!]
\centering
\begin{minipage}[t]{.5\textwidth}
  \centering
  \includegraphics[width=.9\linewidth]{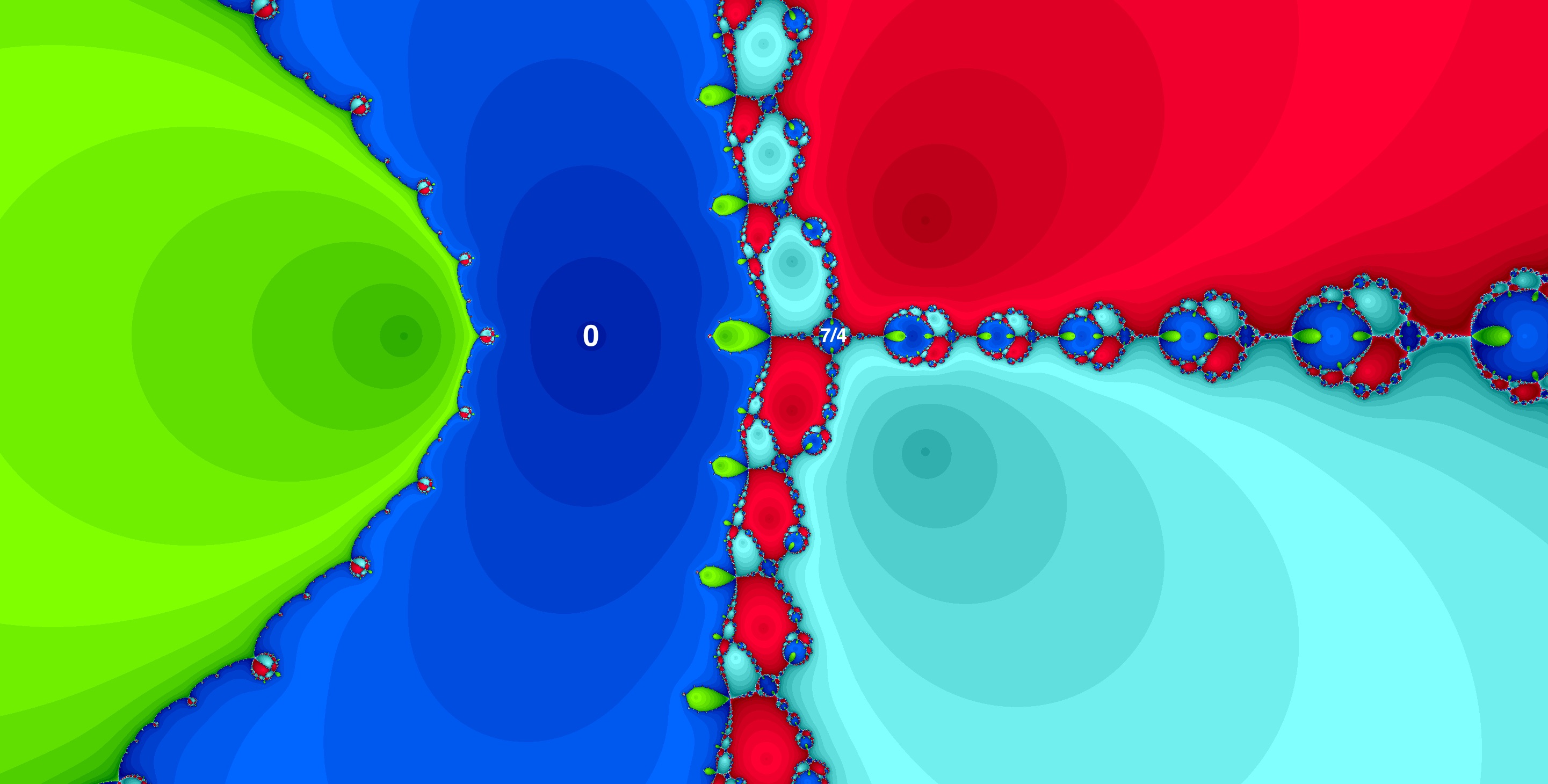}
  \caption{Type IE: The Julia set of the Newton map for polynomial $P(z)=z^4/12-7z^3/24+3z/4$. The additional critical point $0$ is fixed and the other additional critical point $7/4$ is in the basin but not the immediate basin of $0$.}
  \end{minipage}%
\begin{minipage}[t]{.5\textwidth}
  \centering
  \includegraphics[width=.9\linewidth]{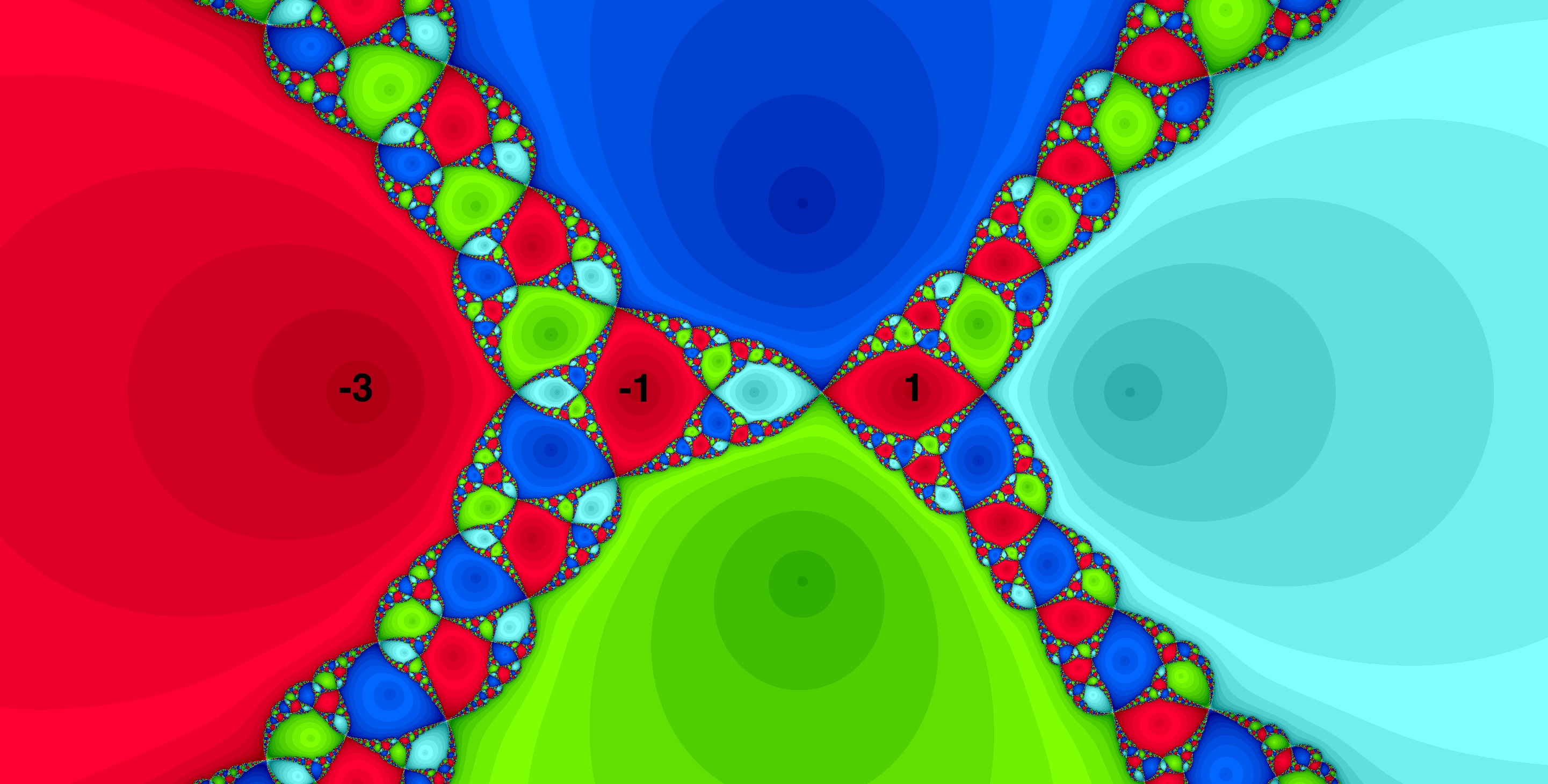}
  \caption{Type FE2: The Julia set of the Newton map for polynomial $P(z)=z^4/4-3z^2/2+z-15/4$. The two additional critical points are $\pm1$ and they maps to the fixed point $-3$ with orbit $-1\to 1\to -3$.}
\end{minipage}
\end{figure}

\section{Berkovich Dynamics of Newton Maps}\label{Berk}
\subsection{Berkovich spaces}
In this subsection, we give brief background of Berkovich space and the related dynamics for rational maps. For more details, we refer \cite{Baker10}.\par
Recall that $\mathbb{L}$ is the completion of the field of formal Puiseux series in variable $t$ over $\mathbb{C}$ with respect to the natural non-Archimedean absolute value. Each element in $\mathbb{L}$ has the form $\sum_{n=0}^\infty a_nt^{q_n}$, where $a_0\in\mathbb{C}\setminus\{0\}$ and $q_n$ increases to $\infty$, and the absolute value is $|\sum_{n=0}^\infty a_nt^{q_n}|=e^{-q_0}$. Let $\mathcal{O}_{\mathbb{L}}$ be the corresponding ring of integers and let $\mathcal{M}_{\mathbb{L}}$ be the unique maximal ideal in $\mathcal{O}_{\mathbb{L}}$. Then the residue field $\mathcal{O}_{\mathbb{L}}/\mathcal{M}_{\mathbb{L}}$ is isomorphic to $\mathbb{C}$.  A rational map $\phi\in\mathbb{L}(z)$ is normalized if the maximum of the absolute values of its coefficients is $1$. For a normalized degree $d\ge0$ rational map $\phi\in\mathbb{L}(z)$, the reduction $\mathrm{red}(\phi)$ is a degree at most $d$ rational map in $\mathbb{C}(z)$ defined by the image of $\phi$ under the map $\mathcal{O}_{\mathbb{L}}(z)\to\mathcal{O}_{\mathbb{L}}/\mathcal{M}_{\mathbb{L}}(z)$. Since every rational map has a unique normalized form, the reduction map extends to $\mathbb{L}(z)$.  If the normalized rational map $\mathbf{f}\in\mathbb{L}(z)$ is induced by a holomorphic family $\{f_t\}$, then the reduction $\mathrm{red}(\mathbf{f})=\lim\limits_{t\to 0}f_t$.\par
The Berkovich space $\mathbf{P}^1$ over $\mathbb{L}$ is a uniquely arcwise connected compact Hausdorff topological space which therefore has a tree structure. It contains 4 types of points. The projective space $\mathbb{P}^1_\mathbb{L}$ is contained in $\mathbf{P}^1$ as type I points. The type II points and type III points correspond to closed disks in $\mathbb{L}$ depending on whether the radius is in the value group $|\mathbb{L}^\times|$ or not. The type IV points are related to a decreasing sequence of closed disks in $\mathbb{L}$ with empty intersection. The type II point corresponding to the closed unit disk in $\mathbb{L}$ is called the \textit{Gauss point} and denoted by $\xi_g$. For $\xi\in\mathbf{P}^1$, the set of the connected components of $\mathbf{P}^1\setminus\{\xi\}$ is called the \textit{tangent space} at $\xi$ and denoted by $T_{\xi}\mathbf{P}^1$. At each type II point $\xi$, the tangent space $T_\xi\mathbf{P}^1$ can be identified to $\mathbb{P}^1$. In particular, at the Gauss point $\xi_g$, this identification is canonical. For each $\vec{v}\in T_\xi\mathbf{P}^1$, the \textit{Berkovich ball} $\mathbf{B}_{\xi}(\vec{v})^-$ is the corresponding connected component of $\mathbf{P}^1\setminus\{\xi\}$. We call the point $\xi$ to be the \textit{boundary point} of $\mathbf{B}_{\xi}(\vec{v})^-$. For $\vec{v}\in T_{\xi_g}\mathbf{P}^1$ and two type I points $\xi_1, \xi_2\in\mathbf{B}_{\xi_g}(\vec{v})^-$, denote by $\xi_1\vee\xi_2$ the unique intersection of segments $[\xi_1,\xi_g]$ and $[\xi_2,\xi_g]$. For a nontrivial closed finite subtree $T\subset\mathbf{P}^1$ and a point $\xi\in\mathbf{P}^1\setminus T$, the \textit{projection} $\pi_{T}(\xi)$ from $\xi$ to $T$ is the point in $T$ at which the points $\xi$ and $\xi'$ are in different directions for any point $\xi'\in T\setminus\{\pi_{T}(\xi)\}$. If $\xi\in T$, we set $\pi_{T}(\xi)=\xi$.\par
Each rational map $\phi\in\mathbb{L}(z)$ extends to a rational map on $\mathbf{P}^1$. To abuse notation, we also write $\phi$ for the extension. At each point $\xi\in\mathbf{P}^1$, the map $\phi$ induces a map $T_\xi\phi:T_\xi\mathbf{P}^1\to T_{\phi(\xi)}\mathbf{P}^1$. If $\phi(\xi_g)=\xi_g$, then $T_{\xi_g}\phi=\mathrm{red}(\phi)$ is nonconstant. A point $\xi\in\mathbf{P}^1$ belongs to the \textit{Julia set} $J_{\mathrm{Ber}}(\phi)$ if for all neighborhoods $U$ of $\xi$, we have that $\cup\phi^n(U)$ omits at most two points. The complement of the Julia set is the \textit{Fatou set} $F_{\mathrm{Ber}}(\phi)$. As in complex settings, we can define attracting, indifferent and repelling cycles for $\phi$ in $\mathbf{P}^1$. If $U$ is a $p$-periodic Fatou component of $\phi$, then $U$ is either the immediate basin of an attracting $p$-periodic point, which is the Fatou component containing this attracting periodic point, or a Rivera domain, which is a Fatou component on which $\phi^p$ is a bijection.\par
If the rational map $\mathbf{f}\in\mathbb{L}(z)$ is induced by a holomorphic family $\{f_t\}$, then a $p$-periodic point $z(t)$ of $f_t$, which is algebraic in $t$ and therefore can be regarded as an element in $\mathbb{L}$, induces a $p$-periodic point $\mathbf{z}$ of  $\mathbf{f}$. Recall that a direction $\vec{v}\in T_{\xi}\mathbf{P}^1$ is a \textit{bad direction} of $\mathbf{f}$ if $\mathbf{f}(\mathbf{B}_{\xi}(\vec{v}))=\mathbf{P}^1$.  The bad directions of $\mathbf{f}$ are related to the holes of $f_0$, see \cite[Lemma 3.17]{Faber13I}. To end this subsection, we formulate Proposition \ref{Fatou-size} in Berkovich dynamics view.
\begin{proposition}
Let $\{f_t\}$ be a holomorphic family of rational maps and let $\mathbf{f}\in\mathbb{L}(z)$ be the induced map. Suppose that $\deg\mathrm{red}(\mathbf{f})\ge 2$ and $\vec{v}\in T_{\xi_g}\mathbf{P}^1$ is a bad direction of $\mathbf{f}$. Let $\langle z(t)\rangle$ be a (super)attracting $n$-cycle of $f_t$ such that $\mathbf{z}^{(0)}\in\mathbf{B}_{\xi_g}(\vec{v})^-$ and $\mathbf{z}^{(i)}\not\in\mathbf{B}_{\xi_g}(\vec{v})^-$ for some $1\le i\le n-1$. If $T_{\xi_g}\mathbf{f}(\vec{v})=\vec{v}$, then there exists a critical point $\mathbf{c}$ of $\mathbf{f}$ such that $\mathbf{f}^\ell(\mathbf{c})\in\mathbf{B}_{\xi_g}(\vec{v})^-$ and $\mathbf{f}^s(\mathbf{c})\not\in\mathbf{B}_{\xi_g}(\vec{v})^-$ for some $0\le \ell, s\le n-1$. In particular, let $\vec{w}\in T_{\xi_g}\mathbf{P}^1$ be the direction such that $\mathbf{c}\in\mathbf{B}_{\xi_g}(\vec{w})^-$. If $\vec{w}$ is not a bad direction, then $\mathrm{red}(\mathbf{c})$ is prefixed under iteration of $\mathrm{red}(\mathbf{f})$.
\end{proposition}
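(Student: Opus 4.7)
The plan is to unwrap the Berkovich hypotheses into the classical complex-analytic setting of Proposition \ref{Fatou-size}, extract a critical point of $f_t$ with the desired orbit behavior, and then repackage the conclusion in Berkovich language. After conjugating $\{f_t\}$ by a M\"obius transformation with unit coefficients (so as to preserve the Gauss point), we may assume the residue class of $\vec{v}$ is $0\in\mathbb{P}^1$. The hypothesis that $\vec{v}$ is a bad direction of $\mathbf{f}$ then becomes $0\in\mathrm{Hole}(f_0)$, where $f_0=H_f\widehat{f}_0=\lim_{t\to 0}f_t$ and $\deg\widehat{f}_0\ge 2$; the hypothesis $T_{\xi_g}\mathbf{f}(\vec{v})=\vec{v}$ translates to $\widehat{f}_0(0)=0$. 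The Berkovich cycle $\langle\mathbf{z}\rangle$ comes from a (super)attracting $n$-cycle $\langle z(t)\rangle$ of $f_t$, and the conditions $\mathbf{z}^{(0)}\in\mathbf{B}_{\xi_g}(\vec{v})^-$, $\mathbf{z}^{(i)}\notin\mathbf{B}_{\xi_g}(\vec{v})^-$ translate to $z^{(0)}(t)\to 0$ and $z^{(i)}(t)\not\to 0$ as $t\to 0$. Proposition \ref{Fatou-size}, applied along any sequence $t_k\to 0$, now yields (a) the Fatou component $\Omega^{(0)}_{t_k}\ni z^{(0)}(t_k)$ shrinks to $0$, and (b) a fixed neighborhood $V$ of $0$ with $\Omega^{(i)}_{t_k}\cap V=\emptyset$ for all large $k$.

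Since the cycle is (super)attracting, Fatou's classical theorem supplies a critical point of $f_t$ in the immediate basin $\bigcup_\ell\Omega^{(\ell)}_t$. The $2d-2$ critical points of $f_t$ are algebraic in $t$, hence define Puiseux series in $\mathbb{L}$, and these are precisely the critical points of $\mathbf{f}\in\mathbb{L}(z)$. By pigeonhole along a sequence $t_k\to 0$, we may fix one such Puiseux critical point $c(t)$ and an index $j\in\{0,\dots,n-1\}$ with $c(t_k)\in\Omega^{(j)}_{t_k}$ for all $k$; let $\mathbf{c}$ be the associated critical point of $\mathbf{f}$. Choosing $\ell,s\in\{0,\dots,n-1\}$ with $\ell\equiv -j$ and $s\equiv i-j\pmod n$, we have $f^\ell_{t_k}(c(t_k))\in\Omega^{(0)}_{t_k}\to 0$ and $f^s_{t_k}(c(t_k))\in\Omega^{(i)}_{t_k}\subset\mathbb{C}\setminus V$, which gives $\mathbf{f}^\ell(\mathbf{c})\in\mathbf{B}_{\xi_g}(\vec{v})^-$ and $\mathbf{f}^s(\mathbf{c})\notin\mathbf{B}_{\xi_g}(\vec{v})^-$, the main claim.

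For the ``in particular'' part: if $\vec{w}$ is not a bad direction, then $\mathrm{red}(\mathbf{c})=c(0)$ is a well-defined point of $\mathbb{P}^1$ not in $\mathrm{Hole}(f_0)$. Since $\deg\mathrm{red}(\mathbf{f})\ge 2$ forces $\mathbf{f}(\xi_g)=\xi_g$, the reduction commutes with $\mathbf{f}$ on type I points sitting in non-bad directions via $\mathrm{red}\circ\mathbf{f}=\widehat{f}_0\circ\mathrm{red}$. Tracking the orbit $\mathbf{c},\mathbf{f}(\mathbf{c}),\dots,\mathbf{f}^\ell(\mathbf{c})$ and using $\mathrm{red}(\mathbf{f}^\ell(\mathbf{c}))=0$ together with $\widehat{f}_0(0)=0$, we obtain $\widehat{f}_0^\ell(c(0))=0$; hence $c(0)=\mathrm{red}(\mathbf{c})$ is prefixed under $\widehat{f}_0$. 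The main subtlety is bookkeeping: matching the classical critical point $c(t)$ with its Berkovich avatar $\mathbf{c}$ and its direction $\vec{w}$ at $\xi_g$, and verifying that intermediate reductions along the orbit remain well-defined; otherwise the argument is a direct translation of Proposition \ref{Fatou-size} combined with Fatou's theorem.
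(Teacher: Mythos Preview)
The paper does not supply a proof of this proposition; it is presented immediately after Proposition~\ref{Fatou-size} with the sentence ``we formulate Proposition~\ref{Fatou-size} in Berkovich dynamics view,'' and left to the reader. Your proposal is exactly the intended translation: normalize so that $\vec v$ corresponds to $0$, read off the complex-analytic hypotheses, invoke Proposition~\ref{Fatou-size}, and use Fatou's theorem plus pigeonhole on the finitely many Puiseux critical points to produce $\mathbf c$. The argument for the main claim is correct; in particular, your passage from ``$f_{t_k}^{\ell}(c(t_k))\in\Omega^{(0)}_{t_k}\to 0$ along a subsequence'' to ``$\mathrm{red}(\mathbf f^{\ell}(\mathbf c))=0$'' is justified because $\mathbf f^{\ell}(\mathbf c)$ is a fixed Puiseux series, so convergence to $0$ along any sequence $t_k\to 0$ forces positive valuation.

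There is one genuine gap, which you flag but do not close. In deducing $\widehat f_0^{\,\ell}(c(0))=0$ from $\mathrm{red}(\mathbf f^{\ell}(\mathbf c))=0$ you implicitly use $\mathrm{red}\circ\mathbf f=\widehat f_0\circ\mathrm{red}$ at every intermediate step $\mathbf f^{p}(\mathbf c)$, $0\le p<\ell$. This commutation fails precisely when $\mathbf f^{p}(\mathbf c)$ sits in a bad direction, and nothing in the hypotheses rules out bad directions other than $\vec v$. If the orbit of $c(0)$ under $\widehat f_0$ meets such a hole $h$ with $\widehat f_0(h)$ not prefixed, the conclusion need not follow. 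In the paper's applications to Newton maps every hole of $N_0$ is an attracting \emph{fixed} point of $\widehat N$, so any intermediate landing in a bad direction already yields a prefixed orbit and the issue evaporates; but as stated in full generality you should either take $\ell$ minimal and add the hypothesis that every bad direction is fixed by $T_{\xi_g}\mathbf f$, or argue separately that the first time the $\widehat f_0$-orbit of $c(0)$ meets $\mathrm{Hole}(f_0)$ it meets a fixed hole.
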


\subsection{Berkovich dynamics of quartic Newton maps}
We now consider the Berkovich dynamics induced by degenerating families corresponding to the three cases in section \ref{lifting}.  For more Berkovich dynamics of general Newton maps, we refer \cite{Nie-thesis}.\par
Suppose $\mathbf{r}, \mathbf{s}\in\mathbb{L}\setminus\{0,1\}$, and define $\mathbf{N}(z):=\mathbf{N}_{\{0, 1, \mathbf{r},\mathbf{s}\}}(z)$ be the Newton map for the polynomial $\mathbf{P}(z)=z(z-1)(z-\mathbf{r})(z-\mathbf{s})\in\mathbb{L}(z)$. 

Refining the classification of degenerations from section \ref{lifting}, we say the map $\mathbf{N}$ is of
\begin{itemize}
\item  \textit{type 1} if $\mathrm{red}(\mathbf{r})=0$ and $\mathrm{red}(\mathbf{s})\in\mathbb{C}\setminus\{0,1\}$;
\item  \textit{type 2} if $\mathrm{red}(\mathbf{r})=0$ and $\mathrm{red}(\mathbf{s})=1$;
\item  \textit{type 3a} if $\mathrm{red}(\mathbf{r})=\mathrm{red}(\mathbf{s})=0$ and $|\mathbf{r}|<|\mathbf{s}|$;
\item  \textit{type 3b} if $\mathrm{red}(\mathbf{r})=\mathrm{red}(\mathbf{s})=0$ and $|\mathbf{r}-\mathbf{s}|=|\mathbf{r}|=|\mathbf{s}|$.
\end{itemize}
In type 3a, the root corresponding to $r$ converges to zero at a faster rate in $t$ than does $s$, while in type 3b they converge at asymptotically the same rate in $t$. \par
Viewing types 1, 2, 3a, 3b as normal forms, we now study their structure as Berkovich dynamical systems.  We follow \cite{Nie-thesis}, specializing to the quartic case.
Define 
$$H_{\mathrm{fix}}:=\mathrm{Hull}(\{0, 1, \mathbf{r}, \mathbf{s},\infty\}).$$
Then $\mathbf{N}(H_{\mathrm{fix}})=H_{\mathrm{fix}}$ \cite[Lemmas 3.13, 3.19]{Nie-thesis}. Set 
$$V_{\mathrm{rep}}:=\{\xi\in H_{\mathrm{fix}}:\mathrm{Val}_{H_{\mathrm{fix}}}(\xi)\ge 3\},$$
where $\mathrm{Val}_{H_{\mathrm{fix}}}(\xi)$ is the valence of the point $\xi$ in the tree $H_{\mathrm{fix}}$.
Then $V_{\mathrm{rep}}$ is the set of type II repelling fixed points of $\mathbf{N}$ \cite[Lemma 3.13]{Nie-thesis}. We illustrate the corresponding $H_{\mathrm{fix}}$ and  $V_{\mathrm{rep}}$ for these four types Newton maps in Figure \ref{four-types}.
\begin{figure}[h!]
\begin{subfigure}{.4\textwidth}
  \centering
  \includegraphics[width=.7\linewidth]{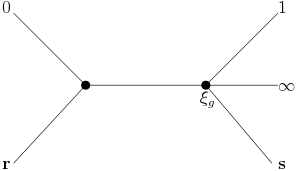}
  \caption{Type 1}
\end{subfigure}
\begin{subfigure}{.4\textwidth}
  \centering
  \includegraphics[width=.7\linewidth]{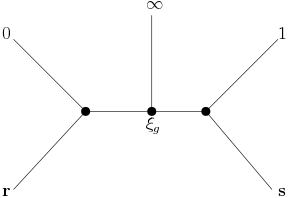}
  \caption{Type 2}
\end{subfigure}
\begin{subfigure}{.4\textwidth}
  \centering
  \includegraphics[width=.7\linewidth]{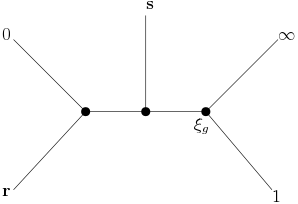}
  \caption{Type 3a}
\end{subfigure}
\begin{subfigure}{.4\textwidth}
  \centering
  \includegraphics[width=.7\linewidth]{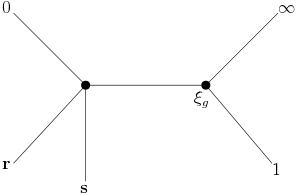}
  \caption{Type 3b}
\end{subfigure}
\caption{The convex hulls $H_{\mathrm{fix}}$ for different types of $\mathbf{N}$. The set $V_{\mathrm{rep}}$ consists of the black dots.}
\label{four-types}
\end{figure}\par

Using this structure, we now show that types 1, 2, 3a and 3b cover all possibilities for degenerations in moduli space:

\begin{lemma}\label{types}
Let $\{N_t\}$ be a holomorphic family of marked quartic Newton maps. Suppose that $[N_t]\to\infty$ in $\mathrm{nm}_4$. Then there exists an affine map $M_t(z)\in \mathbb{L}(z)$ such that the induced map of $M_t^{-1}\circ N_t\circ M_t$ is of type $1, 2, 3a$ or $3b$.
\end{lemma}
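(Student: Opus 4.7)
The plan is to choose an affine map $M_t\in\mathbb{L}(z)$ sending two of the four roots (viewed as elements of $\mathbb{L}$) to $0$ and $1$, so that the remaining two parameters $\mathbf{r},\mathbf{s}$ satisfy one of the four type conditions. The argument is essentially a case analysis on the non-Archimedean distances among the roots.

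First I would pick indices $i\ne j$ realizing $\max_{k\ne\ell}|r_k-r_\ell|$ and set $M_t(z):=r_i+(r_j-r_i)z$; after conjugating, the four roots become $0,1,\mathbf{r},\mathbf{s}\in\mathbb{L}$ with every pairwise distance bounded by $1$. The hypothesis $[N_t]\to\infty$ in $\mathrm{nm}_4$ then forces at least two of the reductions in $\{0,1,\mathrm{red}(\mathbf{r}),\mathrm{red}(\mathbf{s})\}\subset\mathbb{C}$ to coincide, since otherwise $\mathrm{red}(M_t^{-1}\circ N_t\circ M_t)$ would be a non-degenerate quartic Newton map and $[N_t]$ would converge in $\mathrm{nm}_4$.

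Using the symmetries $\mathbf{r}\leftrightarrow\mathbf{s}$ and $z\mapsto 1-z$, I reduce to one of two cases: (a) $\mathrm{red}(\mathbf{r})=0$, or (b) $\mathrm{red}(\mathbf{r})=\mathrm{red}(\mathbf{s})=a\in\mathbb{C}\setminus\{0,1\}$. In case (a), if $\mathrm{red}(\mathbf{s})\in\mathbb{C}\setminus\{0,1\}$ we have type 1; if $\mathrm{red}(\mathbf{s})=1$, type 2; if $\mathrm{red}(\mathbf{s})=0$ and $|\mathbf{r}|\ne|\mathbf{s}|$, type 3a after relabeling; and if $\mathrm{red}(\mathbf{s})=0$ with $|\mathbf{r}|=|\mathbf{s}|=|\mathbf{r}-\mathbf{s}|$, type 3b. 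This leaves only the subcase of (a) with $|\mathbf{r}|=|\mathbf{s}|>|\mathbf{r}-\mathbf{s}|$ and the entirety of case (b).

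For these remaining subcases I would apply the secondary affine conjugation $z\mapsto(z-\mathbf{r})/(1-\mathbf{r})$, which fixes $1$, sends $\mathbf{r}$ to $0$, and maps $0$ and $\mathbf{s}$ to $-\mathbf{r}/(1-\mathbf{r})$ and $(\mathbf{s}-\mathbf{r})/(1-\mathbf{r})$ respectively. A short ultrametric calculation shows that in the remaining subcase of (a) this yields type 3a (both new parameters reduce to $0$ with strictly separated norms), while in case (b) it yields type 1 (one new parameter reduces to $-a/(1-a)\in\mathbb{C}\setminus\{0,1\}$, the other reduces to $0$). The main obstacle is simply the bookkeeping in these two secondary normalizations; no substantive analytic input beyond non-Archimedean arithmetic and the reduction observation above is required.
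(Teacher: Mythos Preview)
Your argument is correct. Both your proof and the paper's reduce to analyzing the non-Archimedean configuration of the four roots as points in $\mathbb{L}$, but the presentations differ: the paper counts branch points (points of valence $\ge 3$) in the convex hull $H=\mathrm{Hull}(\{\mathbf{r}_1,\ldots,\mathbf{r}_4,\infty\})\subset\mathbf{P}^1$, observing that one branch point forces $[N_t]\not\to\infty$, two branch points give type~1 or~3b, and three give type~2 or~3a; you instead carry out the equivalent case analysis explicitly via pairwise ultrametric distances and two rounds of affine normalization. The tree-valence viewpoint is more concise and ties directly into the Berkovich picture (Figure~\ref{four-types}) used later in the paper, while your explicit approach has the minor advantage of exhibiting $M_t$ concretely in each case.
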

\begin{proof}
For $1\le i\le 4$, let $r_i(t)$ be the surperattracting fixed points of $N_t$. Regard $r_i(t)$ as a point in $\mathbb{L}$ and denote by $\mathbf{r}_i$. Now consider the convex hull  $H=\mathrm{Hull}(\{\mathbf{r}_1, \cdots, \mathbf{r}_4,\infty\})$. Then $H$ is a subtree and there are at most three points whose valences in $H$ are at least three. If there is only one such point, then $[N_t]\not\to\infty$. If there are two such points, we can normalize $N_t$ such that the induced map $\mathbf{N}$ is of type $1$ or $3b$. If there are three such points, we normalize $N_t$ such that the induced map $\mathbf{N}$ is of type $2$ or $3a$. 
\end{proof}
In the remainder of this subsection, we assume $N_t=N_{(0,1,r(t),s(t))}$ is a holomorphic family with induced map $\mathbf{N}$ of type 1, 2, 3a, or 3b. Write $N_0=H_N\widehat N$. Then $N_t \to \widehat{N}$ locally uniformly outside the set of at most two holes, which are attracting fixed-points of $\widehat{N}$. Moreover, we have $\mathrm{red}(\mathbf{N})=\widehat N$.\par

Continuing our analysis of the dynamical structure, define
$$H_{\mathrm{rep}}:=\mathrm{Hull}(V_{\mathrm{rep}}).$$
Then any point $\xi\in H_{\mathrm{rep}}$ is a fixed point of $\mathbf{N}$. In fact, the set of non type I fixed points of $\mathbf{N}$ is $H_{\mathrm{rep}}\cup(\xi_g,\infty)$ \cite[Lemma 3.13]{Nie-thesis}. Note for any point $\xi\in\mathbf{P}^1$ and any $\vec{v}\in T_{\xi}\mathbf{P}^1$, if the Berkovich ball $\mathbf{B}_{\xi}(\vec{v})^-$ is disjoint with $V_{\mathrm{rep}}$, then $\mathbf{N}(\mathbf{B}_{\xi}(\vec{v})^-)\not=\mathbf{P}^1$ \cite[Corollary 3.17]{Nie-thesis}. 
For the dynamics on these trees, we have 
\begin{lemma}\cite[Lemma 3.19]{Nie-thesis}\label{attracting-root}
For any $\xi\in\{0,1,\mathbf{r},\mathbf{s}\}$, the segment $(\xi, \pi_{H_{\mathrm{rep}}}(\xi))$ is forward invariant, and any point on the segment $(\xi, \pi_{H_{\mathrm{rep}}}(\xi))$ is attracted to $\xi$ .
\end{lemma}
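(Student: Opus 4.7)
The plan is to analyze $\mathbf{N}$ locally near the superattracting fixed point $\xi$ and exploit the ultrametric to show that Berkovich closed disks centered at $\xi$ are contracted toward $\xi$. After an affine change of coordinates sending $\xi$ to $0$ (which preserves the Berkovich tree structure and the dynamics), $\xi=0$ becomes a simple root of $\mathbf{P}$, so a Taylor expansion of the Newton map at $0$ gives
\[
\mathbf{N}(z)=a\,z^{2}+O(z^{3}),\qquad a=\frac{\mathbf{P}''(0)}{2\,\mathbf{P}'(0)}=-\sum_{r_k\neq 0}\frac{1}{r_k}\in\mathbb{L}^{\times},
\]
where the sum runs over the remaining three roots of $\mathbf{P}$.

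The key preliminary claim is that $1/|a|$ equals the (hyperbolic) tree distance from $\xi$ to $\pi_{H_{\mathrm{rep}}}(\xi)$. Since $H_{\mathrm{fix}}$ is the convex hull of the roots together with $\infty$, the first branching point along the segment emanating from $\xi$ occurs at the unique radius $r_0>0$ such that $\bar D(0,r_0)$ first contains another root, namely $r_0=\min_k|r_k|$. A short case analysis in each of the normal forms 1, 2, 3a, 3b verifies that no cancellation occurs in the sum defining $a$, so the ultrametric inequality gives $|a|=\max_k 1/|r_k|=1/r_0$.

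Now fix $r\in(0,1/|a|)$ and let $\eta_r$ denote the Berkovich type II (or III) point corresponding to the closed disk $\bar D(0,r)\subset\mathbb{L}$. Because $r<r_0$, the disk contains no root other than $0$, so the local expansion $\mathbf{N}(z)=a z^{2}(1+O(z))$ is valid and its leading term dominates on $\bar D(0,r)$; the standard non-Archimedean computation yields
\[
\mathbf{N}\!\left(\bar D(0,r)\right)=\bar D\!\left(0,\;|a|\,r^{2}\right).
\]
Translating to Berkovich points, $\mathbf{N}(\eta_r)=\eta_{|a|r^{2}}$, and the condition $r<1/|a|$ gives $|a|r^{2}<r$, so $\mathbf{N}(\eta_r)$ lies strictly closer to $\xi$ on the same segment. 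This establishes forward invariance of the dense set of type II/III points of the open segment, and continuity of $\mathbf{N}$ on $\mathbf{P}^{1}$ extends it to the full segment. Iterating, the radii defined by $r_{n+1}=|a|r_n^{2}$ satisfy $r_n=(|a|r)^{2^n}/|a|\to 0$, so $\mathbf{N}^n(\eta_r)\to\xi$ in $\mathbf{P}^{1}$.

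The main obstacle is the ultrametric identification $1/|a|=\mathrm{dist}(\xi,\pi_{H_{\mathrm{rep}}}(\xi))$, which requires a case-by-case verification in the four normal forms. In particular, in type 3b one must use the condition $|\mathbf{r}-\mathbf{s}|=|\mathbf{r}|=|\mathbf{s}|$ to ensure the reductions of the two equidistant closest roots are in sufficiently general position to rule out cancellation in the sum defining $a$ and to guarantee that the corresponding branching point lies in $V_{\mathrm{rep}}$, as recorded in Figure \ref{four-types}. Once that identification is in hand, the dynamics on the segment is governed by the contraction $r\mapsto|a|r^{2}$, and both forward invariance and the attraction to $\xi$ follow immediately.
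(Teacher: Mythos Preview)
The paper does not prove this lemma; it is quoted from \cite{Nie-thesis}.  So there is no in-paper argument to compare against, and your proposal must stand on its own.

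Your overall strategy---translate the simple root $\xi$ to $0$, observe that $0$ is a superattracting fixed point of $\mathbf{N}$, and show that Berkovich disks $\bar D(0,r)$ with $r<r_0:=\min_{r_k\neq 0}|r_k|$ are mapped into strictly smaller disks centered at $0$---is correct and is the natural one.  The identification of $r_0$ with the radius corresponding to $\pi_{H_{\mathrm{rep}}}(\xi)$ is also right: the first branch point of $H_{\mathrm{fix}}$ seen from a leaf has valence $\geq 3$, hence lies in $V_{\mathrm{rep}}$, and it is the nearest point of $H_{\mathrm{rep}}$ to $\xi$.

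There is, however, a genuine gap in the step where you assert $|a|=1/r_0$ ``by a short case analysis ruling out cancellation''.  This is false.  In type~1 with $\xi=1$ the three shifted roots $-1,\ \mathbf{r}-1,\ \mathbf{s}-1$ all have absolute value $1$, and the reduction of $a=1+\tfrac{1}{1-\mathbf{r}}+\tfrac{1}{1-\mathbf{s}}$ is $2+\tfrac{1}{1-\mathrm{red}(\mathbf{s})}$, which vanishes when $\mathrm{red}(\mathbf{s})=3/2$.  Likewise in type~3b with $\xi=0$, the hypothesis $|\mathbf{r}-\mathbf{s}|=|\mathbf{r}|=|\mathbf{s}|$ only says the rescaled reductions of $\mathbf{r},\mathbf{s}$ are distinct; they may still be negatives of one another, forcing $|1/\mathbf{r}+1/\mathbf{s}|<1/|\mathbf{r}|$.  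In either case $|a|<1/r_0$, your formula $\mathbf{N}(\bar D(0,r))=\bar D(0,|a|r^2)$ is incorrect (a higher-order term dominates), and if moreover $P''(\xi)=0$ then $a=0$ and the formula collapses entirely.

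The fix is not to chase the exact leading coefficient but to estimate $\mathbf{N}$ directly.  Writing $P(z)=zQ(z)$ with $Q(0)\neq 0$, one has
\[
\mathbf{N}(z)=\frac{z^{2}\,Q'(z)}{Q(z)+zQ'(z)}.
\]
For $|z|<r_0$ every factor $|z-r_k|=|r_k|$, so $|Q(z)|=|Q(0)|$, $|Q'(z)/Q(z)|=\bigl|\sum_{k}1/(z-r_k)\bigr|\leq 1/r_0$, and hence $|zQ'(z)|<|Q(z)|$; the denominator therefore has constant absolute value $|Q(0)|$ on the disk.  This gives $|\mathbf{N}(z)|\leq |z|^{2}/r_0<|z|$ and hence $\mathbf{N}(\eta_r)=\eta_\rho$ with $0<\rho\leq r^{2}/r_0<r$.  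Forward invariance and convergence $\mathbf{N}^{n}(\eta_r)\to\xi$ then follow exactly as in your last paragraph, via the recursion $r_{n+1}\leq r_n^{2}/r_0$.
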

Recall that an \textit{annulus} in $\mathbf{P}^1$ is the intersection of two Berkovich balls $\mathbf{B}_1, \mathbf{B}_2$ with distinct boundary points such that $\mathbf{B}_1\cup \mathbf{B}_2=\mathbf{P}^1$.
\begin{lemma}\cite[Lemma 3.18]{Nie-thesis}\label{fixed-Fatou}
Let $U$ be a component of $F_{\mathrm{Ber}}(\mathbf{N})$ fixed by $\mathbf{N}$. 
\begin{enumerate}
\item $U$ is a fixed Rivera domain if and only if $U$ is a component of $\mathbf{P}^1\setminus V_{\mathrm{rep}}$ which is either an annulus or a ball containing $\infty$.
\item $U$ is a fixed attracting domain if and only if $U$ is a component of $\mathbf{P}^1\setminus V_{\mathrm{rep}}$ which contains some $\xi\in\{0,1,\mathbf{r},\mathbf{s}\}$.
\end{enumerate}
\end{lemma}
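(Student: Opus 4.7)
The plan is to use the decomposition of $\mathbf{P}^1$ into the connected components of $\mathbf{P}^1 \setminus V_{\mathrm{rep}}$---call them \emph{cells}---combined with the fact that every point of $V_{\mathrm{rep}}$ is a type II repelling fixed point of $\mathbf{N}$ and hence lies in $J_{\mathrm{Ber}}(\mathbf{N})$. From the combinatorics of $H_{\mathrm{fix}}$ displayed in Figure \ref{four-types}, each cell is either (a) a Berkovich ball $\mathbf{B}_\eta(\vec{v})^-$ based at some $\eta\in V_{\mathrm{rep}}$ containing exactly one element of $\{0,1,\mathbf{r},\mathbf{s},\infty\}$, or (b) an open annulus between two adjacent points of $V_{\mathrm{rep}}$ along $H_{\mathrm{fix}}$. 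Since $V_{\mathrm{rep}}\subset J_{\mathrm{Ber}}(\mathbf{N})$, every Fatou component is contained in a unique cell, so it will suffice to show that each cell is itself a single fixed Fatou component of the prescribed type.

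For the ``if'' direction, I would first verify that each cell $W$ is forward invariant under $\mathbf{N}$. Since $W$ is disjoint from $V_{\mathrm{rep}}$, the consequence of \cite[Corollary 3.17]{Nie-thesis} recorded just before Lemma \ref{attracting-root} gives $\mathbf{N}(W) \neq \mathbf{P}^1$. The boundary points of $W$ lie in $V_{\mathrm{rep}}$ and are fixed; to upgrade this to $\mathbf{N}(W) \subset W$, I would analyze, at each such boundary point $\eta$, the induced tangent map $T_\eta\mathbf{N}$ on $T_\eta\mathbf{P}^1$ and check that the direction defining $W$ is a fixed direction, using the characterization of the non-type-I fixed locus of $\mathbf{N}$ as $H_{\mathrm{rep}}\cup(\xi_g,\infty)$. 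If $W$ contains some root $\xi\in\{0,1,\mathbf{r},\mathbf{s}\}$, then $\xi$ is a superattracting fixed point of $\mathbf{N}$, and Lemma \ref{attracting-root} shows the segment from $\xi$ to $\partial W\cap H_{\mathrm{rep}}$ is attracted to $\xi$; combined with the forward invariance of $W$, this forces $W$ to coincide with the immediate attracting basin of $\xi$. If instead $W$ is an annulus or the ball around $\infty$, then $W$ contains no attracting fixed point, so by the general classification of cyclic Fatou components of Berkovich rational maps (immediate attracting basin versus Rivera domain), $W$ must be a Rivera domain.

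For the ``only if'' direction, given any fixed Fatou component $U$, containment in some cell $W$ is automatic from $V_{\mathrm{rep}}\subset J_{\mathrm{Ber}}(\mathbf{N})$; since $W$ is itself a fixed Fatou component by the ``if'' direction, maximality forces $U=W$, and whether $U$ is attracting or Rivera is read off from whether $W$ contains a root in $\{0,1,\mathbf{r},\mathbf{s}\}$.

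The hard part will be the tangent-direction analysis at the boundary points $\eta\in V_{\mathrm{rep}}$ needed to upgrade $\mathbf{N}(W)\neq\mathbf{P}^1$ to $\mathbf{N}(W)\subset W$, thereby ruling out the possibility that $\mathbf{N}$ permutes distinct cells nontrivially. This requires, at each $\eta$, an explicit description of the reduction $T_\eta\mathbf{N}\colon T_\eta\mathbf{P}^1\to T_\eta\mathbf{P}^1$ in each of the four normal forms (types 1, 2, 3a, 3b) and matching the fixed directions there with the directions pointing into the adjacent cells. Much of this bookkeeping is inherited from \cite{Nie-thesis}, but verifying it case-by-case against Figure \ref{four-types} is the chief technical burden.
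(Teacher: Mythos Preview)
The paper does not give its own proof of this lemma: it is stated with the citation \cite[Lemma 3.18]{Nie-thesis} and no argument follows. So there is nothing in the paper to compare your proposal against; the intended proof lives in the cited thesis.

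As a stand-alone sketch your strategy is sound: use $V_{\mathrm{rep}}\subset J_{\mathrm{Ber}}(\mathbf{N})$ to confine each fixed Fatou component to a cell of $\mathbf{P}^1\setminus V_{\mathrm{rep}}$, then show each cell is itself a fixed Fatou component of the asserted flavor. One small wrinkle worth flagging: the consequence of \cite[Corollary 3.17]{Nie-thesis} that you invoke (``$\mathbf{N}(\mathbf{B}_\xi(\vec{v})^-)\neq\mathbf{P}^1$ if the ball misses $V_{\mathrm{rep}}$'') is a statement about Berkovich \emph{balls}, and an annular cell $W$ between two points $\eta_1,\eta_2\in V_{\mathrm{rep}}$ is not such a ball---each of the two balls whose intersection is $W$ contains the opposite vertex $\eta_j\in V_{\mathrm{rep}}$. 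For these annuli you should instead argue directly from the facts already recorded in the paper: the open segment $(\eta_1,\eta_2)\subset H_{\mathrm{rep}}$ is pointwise fixed and every point on it has local degree $1$, which forces $\mathbf{N}$ to map $W$ bijectively to itself, hence $W$ is a Rivera domain. With that adjustment the outline goes through; the remaining tangent-direction bookkeeping at each $\eta\in V_{\mathrm{rep}}$ is exactly the case analysis you anticipated.
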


The following result gives us basic information about the locations of the Berkovich type I cycles encoding asymptotics of cycles in the dynamical complex plane. 

\begin{proposition}\label{cycles}
Let $\langle\mathbf{z}\rangle$ be a type I periodic cycle of period $n\ge 2$. Then for $0\le i\le n-1$,
$$\pi_{H_{\mathrm{fix}}}(\mathbf{z}^{(i)})\in V_{\mathrm{rep}}.$$
\end{proposition}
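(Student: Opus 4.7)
The plan is to argue by contradiction. Suppose $\pi := \pi_{H_{\mathrm{fix}}}(\mathbf{z}^{(i)})$ is not in $V_{\mathrm{rep}}$ for some $0 \le i \le n-1$. Since $\mathbf{z}^{(i)}$ is a type~I periodic point of period $n\ge 2$, it cannot be a fixed point of $\mathbf{N}$, so $\mathbf{z}^{(i)}\notin\{0,1,\mathbf{r},\mathbf{s},\infty\}$; consequently $\pi$ lies in the relative interior of exactly one edge of the finite tree $H_{\mathrm{fix}}$. Let $\vec{v}\in T_\pi\mathbf{P}^1$ be the direction with $\mathbf{z}^{(i)}\in\mathbf{B}_\pi(\vec{v})^-$; by the definition of projection, $\vec{v}$ is transverse to $H_{\mathrm{fix}}$ at $\pi$.

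I would then split into cases according to the dynamical nature of the edge on which $\pi$ lies. First, suppose $\pi$ lies on an attracting leaf segment $(\xi,\pi_{H_{\mathrm{rep}}}(\xi))$ for some $\xi\in\{0,1,\mathbf{r},\mathbf{s}\}$. Because $\vec{v}$ is transverse to $H_{\mathrm{fix}}$, the open ball $\mathbf{B}_\pi(\vec{v})^-$ is disjoint from $V_{\mathrm{rep}}$ and lies in the same component of $\mathbf{P}^1\setminus V_{\mathrm{rep}}$ as $\xi$. By Lemma~\ref{fixed-Fatou}(2) this component is the fixed attracting Fatou component of $\xi$, so $\mathbf{N}^k(\mathbf{z}^{(i)})\to\xi$ as $k\to\infty$, contradicting periodicity of period $n\ge 2$.

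The remaining case is when $\pi$ lies on a pointwise fixed segment of $\mathbf{N}$, namely in the relative interior of $H_{\mathrm{rep}}$ or on the segment $(\xi_g,\infty)$ (which is pointwise fixed because $\infty$ is non-archimedeanly indifferent, with multiplier $d/(d-1)$ of absolute value $1$ in $\mathbb{L}$). Then $\pi$ is a fixed type~II point, and by Lemma~\ref{fixed-Fatou}(1) the ball $\mathbf{B}_\pi(\vec{v})^-$ is contained in a fixed Rivera domain $U$ of $\mathbf{N}$. To exclude a type~I cycle of period $\ge 2$ in $U$ I would use the tangent map $T_\pi\mathbf{N}$, a M\"obius transformation on $T_\pi\mathbf{P}^1\cong\mathbb{P}^1_\mathbb{C}$ fixing the two directions along the edge through $\pi$. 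If $U$ is the ball containing $\infty$, the classically repelling multiplier $d/(d-1)$ of $N_t$ at $\infty$ (i.e., the estimate $N_t(z)\approx((d-1)/d)z$ for large $|z|$) rules out any classical cycle of period $\ge 2$ whose iterates $z^{(j)}(t)$ all diverge to $\infty$ as $t\to 0$. If $U$ is annular, a similar classical expansion/contraction estimate for $N_t$ at the asymptotic scale of $\pi$, combined with the structural results of \cite{Nie-thesis} on the tangent dynamics of Newton maps, shows that the rotation multiplier of $T_\pi\mathbf{N}$ at its two fixed directions is never a nontrivial root of unity, so no transverse direction can be periodic of period $n\ge 2$ under $T_\pi\mathbf{N}$.

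The principal obstacle I anticipate is the annular Rivera sub-case: one must verify, at every fixed interior point $\pi\in H_{\mathrm{rep}}\setminus V_{\mathrm{rep}}$, that the tangent dynamics admits no periodic transverse direction of period $\ge 2$, an estimate in the spirit of Proposition~\ref{Fatou-size} that mirrors the classical repelling-multiplier argument used for the ball containing $\infty$.
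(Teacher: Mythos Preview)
Your overall decomposition matches the paper's: split according to whether the projection $\pi$ lands on an attracting leaf segment $(\xi,\pi_{H_{\mathrm{rep}}}(\xi))$ or on a pointwise-fixed segment in $(H_{\mathrm{rep}}\setminus V_{\mathrm{rep}})\cup(\xi_g,\infty)$. Your treatment of the attracting-leaf case via Lemma~\ref{fixed-Fatou}(2) is fine and is equivalent to the paper's use of Lemma~\ref{attracting-root}.

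The gap is in the Rivera case, which you flag as your ``principal obstacle'' and propose to attack with classical expansion/contraction estimates analogous to Proposition~\ref{Fatou-size}. This is both harder than necessary and not carried out. The paper's argument is a direct computation, not an analytical estimate: for any type~II point $\xi$ on such a segment, choose an affine $\mathbf{M}\in\mathbb{L}(z)$ with $\mathbf{M}(\xi_g)=\xi$ and compute $\mathrm{red}(\mathbf{M}^{-1}\circ\mathbf{N}\circ\mathbf{M})$. Because $\deg_\xi\mathbf{N}=1$ at interior points of these segments, the reduction is a M\"obius map, and for the specific Newton map $\mathbf{N}_{\{0,1,\mathbf{r},\mathbf{s}\}}$ one finds it is affine with a positive real rational multiplier: for instance $z\mapsto\tfrac{d-1}{d}z=\tfrac{3}{4}z$ along $(\xi_g,\infty)$, and $z\mapsto\tfrac{1}{2}z$ along the annular segments such as $(\xi_g,0\vee\mathbf{r})$ in types~2 and~3a (this drops out of the leading-order expansion of $z-P/P'$ at the relevant scale). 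Such a map has no nonfixed periodic points, so no direction at $\xi$ is periodic of period $\ge 2$, and hence no type~I point in the fixed Rivera domain can have period $\ge 2$. In short, the ``root of unity'' worry is disposed of by a one-line reduction computation rather than by anything resembling Proposition~\ref{Fatou-size}; you should simply carry out that computation in each of the four types.
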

\begin{proof}
We first claim that any periodic point in a fixed Rivera domain is fixed.  For any type II point $\xi\in(H_{\mathrm{rep}}\setminus V)\cup(\xi_g,\infty)$, let $\mathbf{M}\in\mathbb{L}(z)$ be an affine map such that $\mathbf{M}(\xi_g)=\xi$. It is easy to check $\mathrm{red}(\mathbf{M}^{-1}\circ\mathbf{N}\circ\mathbf{M})$ is a degree $1$ rational map with no nonfixed periodic points.  Thus any point in the fixed Rivera domains can not be a periodic point of period at least $2$.\par
By Lemma \ref{fixed-Fatou}, we only need to show $\pi_{H_{\mathrm{fix}}}(\mathbf{z}^{(i)})\not\in(\xi, \pi_{H_{\mathrm{rep}}}(\xi))$ for all $\xi\in\{0,1,\mathbf{r},\mathbf{s}\}$. It immediately follows from Lemma \ref{attracting-root}.
\end{proof}
For any critical point $\mathbf{c}\in\mathrm{Crit}(\mathbf{N})$, the projection $\pi_{H_{\mathrm{rep}}}(\mathbf{c})\in V_{\mathrm{rep}}$ since the points in $H_{\mathrm{rep}}\setminus V_{\mathrm{rep}}$ have local degree $1$. For any point $\xi\in V_{\mathrm{rep}}$, let $\mathbf{M}\in\mathbb{L}(z)$ be an affine map such that $\mathbf{M}(\xi_g)=\xi$. Then the reduction of  $\mathbf{M}^{-1}\circ\mathbf{N}\circ\mathbf{M}$ is nonconstant. Moreover, $\deg_\xi(\mathbf{N})=\deg\mathrm{red}(\mathbf{M}^{-1}\circ\mathbf{N}\circ\mathbf{M})$. Thus, for $\xi\in H_{\mathrm{rep}}$, there are $2\deg_\xi(\mathbf{N})-2$ critical points $\mathbf{c}\in\mathrm{Crit}(\mathbf{N})$ such that $\pi_{H_{\mathrm{rep}}}(\mathbf{c})=\xi$. As in complex setting, we say a critical point $\mathbf{c}\in\mathbb{P}^1_{\mathbb{L}}$ of Newton map $\mathbf{N}$ is \textit{additional} if $\mathbf{P}''(\mathbf{c})=0$ and it is \textit{free} if $\mathbf{N}(\mathbf{c})\not=\mathbf{c}$.
\begin{proposition}\cite[Lemma 3.16]{Nie-thesis}\label{critical-projection}
Let $\mathbf{c}_1$ and $\mathbf{c}_2$ be the additional critical points. Denote $\Sigma$ the set of projections $\pi_{H_{\mathrm{rep}}}(\mathbf{c}_1)$ and  $\pi_{H_{\mathrm{rep}}}(\mathbf{c}_2)$. Then 
\begin{enumerate}
\item If $\mathbf{N}$ is of type $1$ or $2$, $\Sigma=\{\xi_g\}$. 
\item If $\mathbf{N}$ is of type $3a$ or $3b$, $\Sigma=\{\xi_g,0\vee\mathbf{s}\}$
\end{enumerate}
\end{proposition}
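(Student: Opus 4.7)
The plan is a direct computation using Vieta's formulas and Newton polygon analysis. The additional critical points $\mathbf{c}_1,\mathbf{c}_2\in\mathbb{L}$ are the roots of $\mathbf{P}''(z)/6$, equivalently of
\[ Q(z):=z^2-\frac{1+\mathbf{r}+\mathbf{s}}{2}z+\frac{\mathbf{r}+\mathbf{s}+\mathbf{r}\mathbf{s}}{6}, \]
so $\mathbf{c}_1+\mathbf{c}_2=(1+\mathbf{r}+\mathbf{s})/2$ and $\mathbf{c}_1\mathbf{c}_2=(\mathbf{r}+\mathbf{s}+\mathbf{r}\mathbf{s})/6$. In each of the four types, I plan to read off $|\mathbf{c}_1|,|\mathbf{c}_2|$ from the Newton polygon of $Q$, compute enough of the reductions of $\mathbf{c}_i$ (and, in types 3a and 3b, also of $\mathbf{c}_i/\mathbf{s}$) to determine the direction of $\mathbf{c}_i$ from the appropriate type II point of $V_{\mathrm{rep}}$, and then identify $\pi_{H_{\mathrm{rep}}}(\mathbf{c}_i)$ using the descriptions of $V_{\mathrm{rep}}$ and $H_{\mathrm{rep}}$ indicated in Figure~\ref{four-types}.

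For types 1 and 2 I would use that $|\mathbf{s}|=1$ forces $|(\mathbf{r}+\mathbf{s}+\mathbf{r}\mathbf{s})/6|=1$, so the Newton polygon of $Q$ is horizontal and $|\mathbf{c}_i|=1$ for $i=1,2$. The reductions $\mathrm{red}(\mathbf{c}_1),\mathrm{red}(\mathbf{c}_2)$ are then the two complex roots of
\[ 6z^2-3(1+\mathrm{red}(\mathbf{r})+\mathrm{red}(\mathbf{s}))z+(\mathrm{red}(\mathbf{r})+\mathrm{red}(\mathbf{s})+\mathrm{red}(\mathbf{r})\mathrm{red}(\mathbf{s}))=0. \]
Substituting $(\mathrm{red}(\mathbf{r}),\mathrm{red}(\mathbf{s}))=(0,a)$ with $a\notin\{0,1\}$ in type 1 and $(0,1)$ in type 2, a short check shows that neither reduction lies in $\{0,1,\infty\}$. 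Since the edge(s) of $H_{\mathrm{rep}}$ at $\xi_g$ point in the $0$-direction (type 1) or in the $0$- and $1$-directions (type 2), this forces $\pi_{H_{\mathrm{rep}}}(\mathbf{c}_i)=\xi_g$ for $i=1,2$, so $\Sigma=\{\xi_g\}$.

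For types 3a and 3b, where $|\mathbf{r}|\le|\mathbf{s}|<1$, the coefficients of $Q$ satisfy $|(1+\mathbf{r}+\mathbf{s})/2|=1$ and $|(\mathbf{r}+\mathbf{s}+\mathbf{r}\mathbf{s})/6|\le|\mathbf{s}|<1$, so the Newton polygon has slopes $0$ and $\log|\mathbf{c}_1\mathbf{c}_2|<0$, yielding $|\mathbf{c}_1|=1$ and $|\mathbf{c}_2|=|\mathbf{c}_1\mathbf{c}_2|\le|\mathbf{s}|$. From $\mathbf{c}_1+\mathbf{c}_2=(1+\mathbf{r}+\mathbf{s})/2$ and $|\mathbf{c}_2|<1$ we extract $\mathrm{red}(\mathbf{c}_1)=1/2\notin\{0,1,\infty\}$, giving $\pi_{H_{\mathrm{rep}}}(\mathbf{c}_1)=\xi_g$. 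For $\mathbf{c}_2$, the inequality $|\mathbf{c}_2|\le|\mathbf{s}|$ places $\mathbf{c}_2$ inside the Berkovich closed ball corresponding to $0\vee\mathbf{s}$, so the projection is either $0\vee\mathbf{s}$ or (in type 3a) the further point $0\vee\mathbf{r}$. Rewriting $\mathbf{c}_2=(\mathbf{r}+\mathbf{s}+\mathbf{r}\mathbf{s})/(6\mathbf{c}_1)$ and dividing by $\mathbf{s}$, one computes in type 3a that $\mathrm{red}(\mathbf{c}_2/\mathbf{s})=1/3$, so $\mathbf{c}_2$ lies in a direction at $0\vee\mathbf{s}$ distinct from the $0$-direction toward $0\vee\mathbf{r}$; hence $\pi_{H_{\mathrm{rep}}}(\mathbf{c}_2)=0\vee\mathbf{s}$. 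In type 3b the point $0\vee\mathbf{s}$ is already the endpoint of $H_{\mathrm{rep}}$, so the projection is automatic. This gives $\Sigma=\{\xi_g,0\vee\mathbf{s}\}$.

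The main obstacle is the bookkeeping in non-generic subcases where one of the coefficients of $Q$ has unexpectedly small absolute value due to cancellation: in type 1 the linear coefficient can drop when $\mathrm{red}(\mathbf{s})=-1$, and in type 3b the constant coefficient can drop further when $\mathrm{red}(\mathbf{r}/\mathbf{s})=-1$. In each such situation one must redo the Newton polygon to confirm the sizes $|\mathbf{c}_i|$; however in every subcase the inequality $|\mathbf{c}_2|\le|\mathbf{s}|$ (and the equality $|\mathbf{c}_1|=1$ in types 1 and 2) continues to hold, so the projection onto $H_{\mathrm{rep}}$ is still the one claimed.
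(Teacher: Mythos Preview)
The paper does not prove this proposition; it simply cites it from \cite[Lemma 3.16]{Nie-thesis}. However, the paragraph immediately preceding the statement sketches the argument the thesis uses: for any critical point $\mathbf{c}$ one has $\pi_{H_{\mathrm{rep}}}(\mathbf{c})\in V_{\mathrm{rep}}$ because points of $H_{\mathrm{rep}}\setminus V_{\mathrm{rep}}$ have local degree $1$, and for each $\xi\in V_{\mathrm{rep}}$ exactly $2\deg_\xi(\mathbf{N})-2$ critical points project to $\xi$. One then reads off $\deg_\xi(\mathbf{N})$ at each vertex from the reductions (degree $3$ at $\xi_g$ in type $1$; degree $2$ everywhere in types $2$ and $3a$; degrees $2,3$ in type $3b$), subtracts off the fixed critical points $0,1,\mathbf{r},\mathbf{s}$ whose projections are obvious, and is left with the locations of the two additional critical points by pure counting.

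Your approach is genuinely different: rather than counting via local degrees, you compute $|\mathbf{c}_i|$ and the relevant reductions directly from the Newton polygon of $\mathbf{P}''$ and Vieta's formulas. This is correct and entirely self-contained---it does not rely on the local-degree machinery of \cite{Nie-thesis}---but it is more case-intensive and does not generalize as cleanly to higher degree. The counting argument, by contrast, works uniformly once one knows $\deg_\xi(\mathbf{N})$ at each vertex of $V_{\mathrm{rep}}$.

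One small inaccuracy: in type $1$ you assert that neither reduction of $\mathbf{c}_i$ lies in $\{0,1,\infty\}$, but when $\mathrm{red}(\mathbf{s})=3/2$ the polynomial $6z^2-3(1+a)z+a$ has $z=1$ as a root. This is harmless, since in type $1$ the only edge of $H_{\mathrm{rep}}$ at $\xi_g$ is in the $0$-direction, so you only need $\mathrm{red}(\mathbf{c}_i)\neq 0$, which does hold (the constant term is $a\neq 0$). You should weaken the claim accordingly. Also, $Q(z)$ is $\mathbf{P}''(z)/12$, not $\mathbf{P}''(z)/6$; this is cosmetic.
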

Now denote by $\mathrm{FreeCrit}(\mathbf{N})$ the set of the free critical points of the map $\mathbf{N}$.  The reductions of the free critical points of $\mathbf{N}$ give critical points $c_i$ of $\widehat{N}$. Proposition \ref{critical-projection} implies that their orbits under $\widehat{N}$ are constrained according to the three cases. Table \ref{table free} lists the number of critical points $c_i$ of $\mathbf{N}$ coming from the reduction of free critical points, the locations of the attracting fixed points of $\widehat{N}$, and the behavior of the orbit of $c_i$ under iteration of $\widehat{N}$. The precise statement is in the following Proposition \ref{Berk-dyn}. 
\begin{table}[h!]
\begin{center}
\bgroup
\def\arraystretch{1.5}%
    \begin{tabular}{| l | l | l | l | }
    \hline
     & $\#\mathrm{FreeCrit}(\mathbf{N})$& $\mathrm{Fix_{a}}(\widehat{N})$ & orbit of $c_i$'s under $\widehat{N}$ \\ \hline
    type 1 & 1 or 2 & \{0\}& for some $c_i$, $0 \neq c_i \to 0$ with infinite orbit\\ \hline
    type 2 & 2 & \{0, 1\}& $0 \neq c_i \to i, \; i=0,1$, each with infinite orbit \\ \hline
   type 3a & 2 & \{0\} & $c_0=0, 0 \neq c_1 \to 0$ with infinite orbit \\ \hline
   type 3b & 1 or 2 & \{0\} & $c_0=0, 0 \neq c_1 \to 0$ with infinite orbit \\
    \hline
    \end{tabular}
    \egroup
\end{center}
\caption{The number of free critical points of $\mathbf{N}$, locations of the attracting fixed points of $\widehat{N}$, and orbit of reductions $c_i$ under $\widehat{N}$.}
\label{table free}
\end{table}\par

\begin{proposition}\label{Berk-dyn}
There is a free critical point $\mathbf{c}$ such that the projection $\pi_{H_{\mathrm{fix}}}(\mathbf{c})=\xi_g$. In particular,
\begin{enumerate}
\item If $\mathbf{N}$ is of type $1$, the reduction of a free critical point is attracted to $0$ under $\widehat{N}$. 
\item If $\mathbf{N}$ is of type $2$, the reduction of a free critical point is attracted to $0$ under $\widehat{N}$ and the reduction of the other free critical point is attracted to $1$ under $\widehat{N}$.
\item If $\mathbf{N}$ is of type $3a$ or $3b$, the reduction of a free critical point is attracted to $0$ under $\widehat{N}$, and the reduction of the other additional critical point is $0$.
\end{enumerate}
\end{proposition}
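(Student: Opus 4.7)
My plan is to combine an explicit computation of the additional critical points over $\mathbb{L}$ with the classical complex-dynamics fact that the immediate basin of every attracting cycle of a rational map contains a critical point, applied to the reduction $\widehat{N}$. The first assertion of the proposition will follow from the computation, and the case-by-case claims (1)-(3) will follow from basin counting on $\widehat{N}$ using the structural data in Table \ref{table invariants}.

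For the existence of a free critical point $\mathbf{c}$ with $\pi_{H_{\mathrm{fix}}}(\mathbf{c})=\xi_g$, the two additional critical points of $\mathbf{N}$ are precisely the roots over $\mathbb{L}$ of
\[
6z^2-3(1+\mathbf{r}+\mathbf{s})\,z+(\mathbf{r}+\mathbf{s}+\mathbf{rs})=0,
\]
obtained from $\mathbf{P}''$. Reducing this quadratic modulo $\mathcal{M}_{\mathbb{L}}$ type by type: in type 1 the reductions satisfy $6z^2-3(1+\mathrm{red}(\mathbf{s}))z+\mathrm{red}(\mathbf{s})=0$, and an elementary check shows at most one reduction can coincide with $\{0,1,\mathrm{red}(\mathbf{s})\}$ (and only for the isolated values $\mathrm{red}(\mathbf{s})\in\{2/3,3/2\}$); in type 2 both reductions equal $(3\pm\sqrt{3})/6\notin\{0,1\}$; in types 3a and 3b the two reductions are always $0$ and $1/2$. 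Hence in every case at least one additional critical point $\mathbf{c}$ has reduction outside $\{0,1,\mathrm{red}(\mathbf{r}),\mathrm{red}(\mathbf{s})\}$. Such a $\mathbf{c}$ lies in a direction at $\xi_g$ meeting no leaf of $H_{\mathrm{fix}}$, giving $\pi_{H_{\mathrm{fix}}}(\mathbf{c})=\xi_g$; moreover $\mathbf{c}$ is not a root of $\mathbf{P}$ (the roots have reductions in the forbidden set), so it is free. The type-3 computation simultaneously establishes the last clause of (3): the other additional critical point has reduction exactly $0$.

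For (1), (2), and the first clause of (3), I invoke the basin-contains-critical-point principle for $\widehat{N}$. In type 1, $\widehat{N}$ has an attracting fixed point at $0$ of multiplier $1/2$ and superattracting fixed points at $1$ and $\mathrm{red}(\mathbf{s})$; the critical point in the immediate basin of $0$ cannot be either of the latter two (each sits in its own basin), so it must be the reduction of a free critical point of $\mathbf{N}$. In type 2, $\widehat{N}$ has degree $2$ with exactly two critical points — the two reductions of free critical points of $\mathbf{N}$ — and the two attracting fixed basins at $0$ and $1$ absorb them one apiece. In type 3, after normalization $\widehat{N}$ is the Newton map of $z^3(z-1)$ with critical set $\{1,1/2\}$; the would-be critical point at $z=0$ coming from the other additional critical point of $\mathbf{N}$ is cancelled in $\mathbf{N}'=\mathbf{P}\,\mathbf{P}''/(\mathbf{P}')^2$ by the simultaneous vanishing of $\mathrm{red}(\mathbf{P}')$, so the basin argument places $1/2$ in the basin of the attracting fixed point $0$.

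The main obstacle is the bookkeeping in the exceptional subcases. In type 1 when $\mathrm{red}(\mathbf{s})\in\{2/3,3/2\}$ one additional critical point of $\mathbf{N}$ reduces onto a root of $\mathrm{red}(\mathbf{P})$, and in types 3a and 3b the cancellation noted above removes a nominal critical point of $\widehat{N}$. In these cases one must check that the local degree of $\widehat{N}$ at the coincidence point absorbs the missing critical mass, and that the refined Fatou-Shishikura inequality (Proposition \ref{number}) together with the counts in Table \ref{table invariants} still forces the surviving reduction into the immediate basin of $0$.
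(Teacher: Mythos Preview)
Your proposal is correct and follows essentially the same logic the paper uses. Note that the paper does not give a standalone proof of Proposition~\ref{Berk-dyn}; it is presented as a summary of the preceding discussion, namely Proposition~\ref{critical-projection} (imported from \cite{Nie-thesis}) together with Table~\ref{table free}, both of which rest on locating the additional critical points and then applying the ``every attracting basin contains a critical point'' principle to $\widehat{N}$. Your argument implements exactly this, with the only difference that you replace the citation to Proposition~\ref{critical-projection} by a direct computation of the roots of $\mathbf{P}''$ and their reductions. This is a perfectly legitimate and more self-contained route; the two approaches coincide once one observes that the roots of $\mathbf{P}''$ lie in $\mathcal{O}_{\mathbb{L}}$ (the leading coefficient $6$ is a unit, so reduction commutes with root-taking) and that the reductions you compute are precisely the additional critical points of $\widehat{N}$.

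Two minor remarks on your ``main obstacle'' paragraph. First, the exceptional type~1 values $\mathrm{red}(\mathbf{s})\in\{2/3,3/2\}$ cause no actual difficulty: in those cases the coinciding critical point simply raises the local degree of $\widehat{N}$ at the corresponding superattracting root, while the \emph{other} reduction ($1/6$ or $1/4$) remains outside $\{0,1,\mathrm{red}(\mathbf{s})\}$ and is forced into the basin of $0$ by the same counting. No appeal to the refined FSI is needed. Second, in types~3a/3b your description of the cancellation at $0$ is correct, and once you have computed $\mathrm{Crit}(\widehat{N})=\{1,1/2\}$ the basin argument is immediate: $1$ sits in its own superattracting basin, so $1/2$ must lie in the basin of $0$.
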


\section{Constraints on limiting cycles via Berkovich dynamics}\label{cycle}
In this section, we apply the study of Berkovich dynamics from section \ref{Berk} to obtain finer results on limits $\Gamma_1, \Gamma_2$ of the free attracting cycles for holomorphic family degenerating along a sequence in a hyperbolic component of type D.  

To this end, let $\{N_t\}$ be a holomorphic family of Newton maps for polynomials $z(z-1)(z-r(t))(z-s(t))$ converging to $N=H_N\widehat{N}$ with induced map $\mathbf{N}$ of type 1, 2, 3a, or 3b. We further assume a sequence $N_{t_k}$ lies in a lift $\mathcal{H}^\#$ of a type D hyperbolic component, and again denote the two free (super)attracting cycles, by  $\langle z_k\rangle$ and $\langle w_k\rangle$. Let $\Gamma_1$ and $\Gamma_2$ be the limits of $\langle z_k\rangle$ and $\langle w_k\rangle$, respectively. Proposition \ref{intersection} asserts that there is $i_0\in\{1,2\}$ such that $\Gamma_{i_0}\cap\mathrm{Hole}(N)\not=\emptyset$. Without loss of generality, we let $i_0=1$. The following result gives more precise information about the intersection of limiting cycles and $\mathrm{Hole}(N)$. 

\begin{proposition}\label{cycle123}
Let $\Gamma_1$ and $\Gamma_2$ be as above.
\begin{enumerate}
\item If $\mathbf{N}$ is of type $1$, then $\{0\}\subsetneqq\Gamma_1$.
\item If $\mathbf{N}$ is of type $2$, then $\{a\}\subsetneqq\Gamma_1$ for some $a\in\{0,1\}$.
\item If $\mathbf{N}$ is of type $3a$, then $\{0\}\subset\Gamma_1$. Moreover, if $\Gamma_1=\{0\}$,  let $\{M_t(z)\}$ be a family of scaling maps such that the induced map $\mathbf{M}(\xi_g)=0\vee\mathbf{s}$. Set $\tilde z_k=M_{t_k}^{-1}(z_k)$ and let $\widetilde\Gamma_1$ be the limit of $\langle\tilde z_k\rangle$. Then $\{0\}\subsetneqq\widetilde\Gamma_1$.
\item If $\mathbf{N}$ is of type $3b$, then $\{0\}\subset\Gamma_1$ and $\{0\}\subset\Gamma_2$. Moreover, if $\Gamma_1=\{0\}$, then $\{0\}\subsetneqq\Gamma_2$.
\end{enumerate}
\end{proposition}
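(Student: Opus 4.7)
My plan is to prove each assertion by contradiction, combining the Berkovich projection constraint of Proposition \ref{cycles} with a suitable rescaling that exposes the collapsed cycle as a periodic orbit of a lower-degree rescaled reduction. The core contradiction will come from the fact that the original $\mathbb{L}$-multiplier $\rho$ of the attracting cycle satisfies $|\mathrm{red}(\rho)| \leq 1$ and reduces to the multiplier of the reduced cycle, while the structure of the rescaled reduction either forces the reduced cycle to be repelling (impossible) or violates the refined Fatou--Shishikura inequality of Proposition \ref{number}.

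The base containments of $0$ (or an element of $\{0,1\}$) in $\Gamma_1$ in every case will follow directly from Proposition \ref{intersection} after relabelling, since the holes of $N$ are $\{0\}$ in types 1, 3a, 3b and $\{0,1\}$ in type 2. To additionally force $0 \in \Gamma_2$ in (4), I will use Table \ref{table invariants}: $\widehat N$ has $\gamma \leq \delta = 1$ with the attracting fixed point at $0$ already saturating the bound via $\gamma_{\langle 0 \rangle} = 1$, so if $\Gamma_2$ avoided $\{0\}$ it would persist as a nonrepelling cycle of $\widehat N$; this cycle cannot be the superattracting fixed point $1$ of $\widehat N$ (Lemma \ref{par} applied to the fixed cycle $\{1\}$ of $N_{t_k}$ together with $\langle w_k \rangle$ would then force $1$ parabolic in $\widehat N$, contradicting its multiplier $0$), while otherwise $\gamma_{\Gamma_2} \geq 1$ gives $\gamma(\widehat N) \geq 2 > 1$, violating Proposition \ref{number}.

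For the strict containments in (1), (2), and the ``moreover'' parts of (3), (4), assume the cycle (both cycles, for (4)) collapses entirely to the hole. By Proposition \ref{cycles} each Berkovich cycle point has projection in $V_{\mathrm{rep}}$, and since the reduction is at the hole, the projection is forced to the unique $V_{\mathrm{rep}}$-element on the segment from $\xi_g$ toward the relevant root: $0 \vee \mathbf{r}$ in types 1, 3b (and in type 2 for one of $\{0,1\}$), and, for the moreover-part of (3), reapplying Proposition \ref{cycles} in the already $\mathbf{s}$-rescaled Berkovich picture pins the projection at $0 \vee (\mathbf{r}/\mathbf{s})$. Rescale so that this point becomes the Gauss point; in the new coordinates the cycle projects to the new $\xi_g$, and its reductions $a_i$ lie in $\mathbb{C} \setminus \{0,1\}$ (additionally avoiding $c_0 := \mathrm{red}(\mathbf{s}/\mathbf{r})$ in type 3b) since these are precisely the off-tree tangent directions at the new Gauss point. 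Direct computation identifies the rescaled reduction $\widetilde{\widehat N}$ as the quadratic Newton map $N_{\{0,1\}}$ in types 1, 2, 3a, and as the cubic Newton map $N_{\{0,1,c_0\}}$ in type 3b.

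In the quadratic cases, the only Fatou periodic orbits of $N_{\{0,1\}}$ are the superattracting fixed points $\{0\}$ and $\{1\}$, so the reduced cycle $(a_i)$ of period $\geq 2$ in $\mathbb{C} \setminus \{0,1,\infty\}$ is a repelling Julia cycle, contradicting $|\mathrm{red}(\rho)| \leq 1$. In type 3b, both reduced cycles $(a_i)$ and $(b_j)$ live in $N_{\{0,1,c_0\}}$; if either lies in the Julia set we again obtain the multiplier contradiction, and otherwise both are attracting or parabolic cycles in the Fatou set, contributing $\gamma \geq 2$ (including the merger case where a common parabolic cycle has degeneracy $\nu \geq 1$ and hence $\gamma_{\Gamma} = \nu + 1 \geq 2$), while $\delta(N_{\{0,1,c_0\}}) \leq 1$ since the cubic Newton map has only one additional critical point (the unique zero of $P''$ for $P(z) = z(z-1)(z-c_0)$), violating Proposition \ref{number}. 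The main obstacle will be the type 3b case, where the cubic rescaled reduction requires the refined FSI analysis rather than the clean Julia/Fatou dichotomy available in the quadratic cases, and where the possible parabolic merger of the two limit cycles must be handled via the degeneracy bookkeeping.
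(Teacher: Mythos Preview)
Your approach is essentially the same as the paper's: use Proposition~\ref{cycles} to force the collapsed Berkovich cycle to project to the appropriate vertex of $V_{\mathrm{rep}}$, rescale so that this vertex becomes the Gauss point, and then argue that the rescaled reduction (a lower-degree Newton map) cannot support the resulting nonrepelling cycle. Your identification of the reductions as $N_{\{0,1\}}$ in types 1, 2, 3a and $N_{\{0,1,c_0\}}$ in type 3b matches the paper's descriptions (``quadratic with two fixed critical points'', ``cubic with three superattracting fixed points''), and your multiplier argument $|\mathrm{red}(\rho)|\le 1$ is equivalent to the paper's ``converges to a nonrepelling cycle''.

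There is, however, a genuine gap in your type 3b merger case. You assert that when $\widetilde\Gamma_1=\widetilde\Gamma_2$ is a common parabolic cycle with degeneracy $\nu\ge 1$, one has $\gamma_{\widetilde\Gamma}=\nu+1\ge 2$. But the formula $\gamma=\nu+1$ holds only for parabolic-attracting or parabolic-indifferent cycles; for parabolic-repelling cycles $\gamma=\nu$, which can equal $1$ and would not violate $\delta(N_{\{0,1,c_0\}})\le 1$. You need Lemma~\ref{par-att-ind}: since \emph{both} merging cycles $\langle z_k\rangle$, $\langle w_k\rangle$ are attracting, the limit parabolic cycle is forced to be parabolic-attracting or parabolic-indifferent. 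The paper invokes this lemma explicitly; without it your bookkeeping does not close. Relatedly, your Julia/Fatou dichotomy is miscast: parabolic periodic points lie in the Julia set, not the Fatou set, so ``Julia set $\Rightarrow$ multiplier contradiction'' fails for parabolic limits. The correct split is repelling versus nonrepelling, and the nonrepelling-but-possibly-parabolic case is exactly where Lemma~\ref{par-att-ind} is needed.

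A minor omission: in part (3) you argue $\widetilde\Gamma_1\neq\{0\}$ but do not verify $0\in\widetilde\Gamma_1$, which is part of the conclusion $\{0\}\subsetneqq\widetilde\Gamma_1$. The paper handles this by noting that the $\mathbf{s}$-rescaled reduction is quadratic with two (super)attracting fixed points and hence has no other nonrepelling cycles, forcing $\widetilde\Gamma_1$ to meet the hole $0$; the same reasoning you use elsewhere applies here.
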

\begin{proof}
We treat each type in turn.

If $\mathbf{N}$ is of type $1$, then $\mathrm{Hole}(N)=\{0\}$. If $\Gamma_1=\{0\}$, then by Proposition \ref{cycles}, we have $|z_k^{(i)}|=O(|r(t_k)|)$ for all $0\le i\le n-1$. Let $\{M_t(z)\}$ be a family of affine maps such that the induced map $\mathbf{M}(\xi_g)=0\vee\mathbf{r}$. Then $\langle M^{-1}_{t_k}(z_k)\rangle$ converges to a nonrepelling cycle of $\mathrm{red}(\mathbf{M}^{-1}\circ\mathbf{N}\circ\mathbf{M})$. Note $\mathrm{red}(\mathbf{M}^{-1}\circ\mathbf{N}\circ\mathbf{M})$ is a quadratic rational map with two fixed critical points. Thus it has no other nonrepelling cycles. Thus $\{0\}\subsetneqq\Gamma_1$.\par
If $\mathbf{N}$ is of type $2$, then $\mathrm{Hole}(N)=\{0,1\}$. Since $\Gamma_1\cap\mathrm{Hole}(N)\not=\emptyset$, without of lose generality, we suppose $0\in\Gamma_1$. Then it follows from an analogue of the case when $\mathbf{N}$ is of type $1$.\par
If $\mathbf{N}$ is of type $3a$, then $\mathrm{Hole}(N)=\{0\}$. If $0\not\in\Gamma_1$, then $\Gamma_1$ is a nonrepelling cycle of $\widehat N$. It is impossible since $\widehat{N}$ is a quadratic rational maps with two (super)attracting fixed points. If $\Gamma_1=\{0\}$, then $\widetilde\Gamma_1$ is either a nonrepelling cycle of $\mathrm{red}(\mathbf{M}^{-1}\circ\mathbf{N}\circ\mathbf{M})$ or $0\in\widetilde\Gamma_1$. Note $\mathrm{red}(\mathbf{M}^{-1}\circ\mathbf{N}\circ\mathbf{M})$ is a quadratic rational map with two (super)attracting fixed points. Hence it has no other nonrepelling cycles. Thus $0\in\widetilde\Gamma_1$. If $\widetilde\Gamma_1=\{0\}$, then by Proposition \ref{cycles}, we have $|\tilde z_k^{(i)}|=O(|r(t_k)|)$. Let $\{L_t(z)\}$ be a family of affine maps such that the induced map $\mathbf{L}(\xi_g)=0\vee\mathbf{r}$. Then $\langle L^{-1}_{t_k}(z_k)\rangle$ converges to a nonrepelling cycles of the map $\mathrm{red}(\mathbf{L}^{-1}\circ\mathbf{N}\circ\mathbf{L})$. Note the $\mathrm{red}(\mathbf{L}^{-1}\circ\mathbf{N}\circ\mathbf{L})$ is a quadratic rational map with two superattracting fixed points. Thus it has no other nonrepelling cycles. Thus $\{0\}\subsetneqq\widetilde\Gamma_1$.\par
If $\mathbf{N}$ is of type $3b$, we have $\mathrm{Hole}(N)=\{0\}$. Since $\widehat{N}$ is a quadratic rational maps with two (super)attracting fixed points, $0\not\in\Gamma_1\cap\Gamma_2$. If $\Gamma_1=\Gamma_2=\{0\}$, then again by Proposition \ref{cycles}, we have $|z_k^{(i)}|=O(|r(t_k)|)$. Let $\{L_t(z)\}$ be a family of affine maps such that the induced map $\mathbf{L}(\xi_g)=0\vee\mathbf{r}$. Let $\widetilde\Gamma_1$ and $\widetilde\Gamma_2$ be the limits of $\langle L^{-1}_{t_k}(z_k)\rangle$ and $\langle L^{-1}_{t_k}(w_k)\rangle$, respectively. Let $\overline{N}$ be the reduction of $\mathbf{L}^{-1}\circ\mathbf{N}\circ\mathbf{L}$. Then $\overline{N}$ is a cubic rational map with three superattracting fixed points. The FSI implies that $\widetilde\Gamma_1\cap\widetilde\Gamma_2\not=\emptyset$. Then by Lemmas \ref{par} and \ref{par-att-ind}, we know $\widetilde\Gamma_1=\widetilde\Gamma_2$ is either parabolic-attracting or parabolic-indifferent. Then the Epstein's invariant $\gamma(\overline{N})\ge 2$. However, the number of infinite tails of critical orbits is $\delta(\overline{N})\le 1$. It contradicts to Proposition \ref{number}.
\end{proof}

\section{Boundedness}\label{compact}
In this section, we prove the Main Lemma. 

\begin{proof}[Proof of the Main Lemma]
Suppose $\mathcal{H}$ is a type $D$ component and $N_t$ a holomorphic family as in the statement.  Suppose to the contrary that $[N_{t_k}]\to\infty$ in $\mathrm{nm}_4$ for some sequence $t_k \to 0$. Then $N_t\to N=H_N\widehat N\in\overline{\mathrm{Rat}}_d = \mathbb{P}^{2d+1}$. 

By Lemma \ref{types}, we may and do assume that the induced map $\mathbf{N}$ is of type $1, 2, 3a$ or $3b$. Let $\langle z_k\rangle$ and $\langle w_k\rangle$ be the free attracting cycles of $N_{t_k}$. Suppose that they are of periods $n_1\ge 2$ and $n_2\ge 2$, respectively. Let $c_1(t)$ and $c_2(t)$ be the two additional critical points of $N_t$. Assume $c_1(t_k)$ is in the immediate basin of $\langle z_k\rangle$ and $c_2(t_k)$ is in the immediate basin of $\langle w_k\rangle$. Let $\Gamma_1$ and $\Gamma_2$ be the limits of $\langle z_k\rangle$ and $\langle w_k\rangle$, respectively. By Proposition \ref{intersection}, we may assume $\Gamma_1\cap\mathrm{Hole}(N)\not=\emptyset$.\par

Again we treat each type in turn. \par

If $\mathbf{N}$ is of type $1$, the unique hole $0$ of $N$ is an attracting fixed point of $\widehat N$. Then by Propositions \ref{Fatou-size} and \ref{cycle123}, there exists $0\le q_1\le n_1-1$ such that $N_{t_k}^{q_1}(c_1(t_k))$ converges to $0$.  It follows that for the induced map $\mathbf{N}$, the reduction $\mathrm{red}(\mathbf{N}^{q_1}(\mathbf{c}_1))=0$. Thus the critical point $\mathrm{red}(\mathbf{c}_1)$ of $\widehat N$ satisfies $\widehat N^{q_1}(\mathrm{red}(\mathbf{c}_1))=0$. Note there is a nonzero critical point of $\widehat N$ in the immediate basin of $0$. Thus $0\in\Gamma_2$. By Proposition \ref{cycle123}, we have $\{0\}\subsetneqq\Gamma_2$. Then by Proposition \ref{Fatou-size}, there exists $0\le q_2\le n_2-1$ such that $N_{t_k}^{q_2}(c_2(t_k))$ converges to $0$. It follows that for the induced map $\mathbf{N}$, the reduction $\mathrm{red}(\mathbf{N}^{q_2}(\mathbf{c}_2))=0$. It contradicts to Proposition \ref{Berk-dyn}.\par
If $\mathbf{N}$ is of type $2$, by Propositions  \ref{Fatou-size} and \ref{cycle123}, there exists $0\le q_1\le n_1-1$ such that $N_{t_k}^{q_1}(c_1(t_k))$ converges to a point in $\mathrm{Hole}(N)$. It follows that for the induced map $\mathbf{N}$, the reduction $\mathrm{red}(\mathbf{N}^{q_1}(\mathbf{c}_1)$ is either $0$ or $1$. It contradicts to Proposition \ref{Berk-dyn}.\par
If $\mathbf{N}$ is of type $3a$, by Propositions \ref{cycle123}, we know $0\in\Gamma_1\cap\Gamma_2$. If $\{0\}\subsetneqq\Gamma_1$ and  $\{0\}\subsetneqq\Gamma_2$, by Proposition \ref{Fatou-size}, there exist $0\le q_1\le n_1-1$ and $0\le q_2\le n_2-1$  such that $N_{t_k}^{q_1}(c_1(t_k))$ converges to $0$ and $N_{t_k}^{q_2}(c_2(t_k))$ converges to $0$.  It follows that for the induced map $\mathbf{N}$, the reductions $\mathrm{red}(\mathbf{N}^{q_1}(\mathbf{c}_1))=0$ and $\mathrm{red}(\mathbf{N}^{q_2}(\mathbf{c}_2))=0$. It contradicts to Proposition \ref{Berk-dyn}. Now we may assume $\Gamma_1=\{0\}$. Let $\{M_t(z)\}$ be a family of scaling maps such that the induced map $\mathbf{M}(\xi_g)=0\vee\mathbf{s}$. Set $\widetilde N_t=M_t^{-1}\circ N_t\circ M_t$ and assume $\widetilde{N}_t\to\widetilde{N}$. Then $\mathrm{Hole}(\widetilde N)=\{0,\infty\}$. Set $\tilde z_k=M_{t_k}^{-1}(z_k)$ and denote $\widetilde\Gamma_1$ the limit of $\langle\tilde z_k\rangle$. By Proposition \ref{cycle123}, we have $\{0\}\subsetneqq\widetilde \Gamma_1$. Note $0$ is an attracting fixed point of $\mathrm{red}(\mathbf{M}^{-1}\circ\mathbf{N}\circ\mathbf{M})$. Thus by Proposition \ref{Fatou-size}, there exist $0\le q_1\le n_1-1$ and a critical point $\tilde{c}(t_k)$ of $\widetilde N_{t_k}$ such that $\widetilde N_{t_k}^{q_1}(\tilde c(t_k))$ converges to $0$. Equivalently, there is $i\in\{1,2\}$ such that $|\mathbf{N}^{q_1}(\mathbf{c}_i)|<|\mathbf{s}|$. It also contradicts to Proposition \ref{Berk-dyn}.\par
If $\mathbf{N}$ is of type $3b$, by Propositions \ref{cycle123}, we know $0\in\Gamma_1\cap\Gamma_2$. If $\{0\}\subsetneqq\Gamma_1$ and  $\{0\}\subsetneqq\Gamma_2$, by Proposition \ref{Fatou-size}, there exists $0\le q_1\le n_1-1$ and $0\le q_2\le n_2-1$  such that $N_{t_k}^{q_1}(c_1(t_k))$ converges to $0$ and $N_{t_k}^{q_2}(c_2(t_k))$ converges to $0$.  It follows that for the induced map $\mathbf{N}$, the reductions $\mathrm{red}(\mathbf{N}^{q_1}(\mathbf{c}_1))=0$ and $\mathrm{red}(\mathbf{N}^{q_2}(\mathbf{c}_2))=0$. It contradicts to Proposition \ref{Berk-dyn}. If $\Gamma_1=\{0\}$,  then by Propositions \ref{cycle123}, we have $\{0\}\subsetneqq\Gamma_2$. By Propositions \ref{Fatou-size}, \ref{critical-projection} and \ref{Berk-dyn}, we know $|\mathbf{c}_2|\le|\mathbf{r}|$. Let $\{L_t(z)\}$ be a family of affine maps such that the induced map $\mathbf{L}(\xi_g)=0\vee\mathbf{r}$. Suppose $\langle L^{-1}_{t_k}(z_k)\rangle$ converges to $\widetilde\Gamma_1$. Then $\widetilde\Gamma_1$ is a nonrepelling cycle of $\mathrm{red}(\mathbf{L}^{-1}\circ\mathbf{N}\circ\mathbf{L})$.  However, $\mathrm{red}(\mathbf{L}^{-1}\circ\mathbf{N}\circ\mathbf{L})$ is a cubic rational map with three fixed critical points and one critical points mapping to the repelling fixed point under iterate. It contradicts with the FSI.
\end{proof}

\section{Proof of Theorem \ref{accessible-bounded}}\label{accessible}
In this section, we prove Theorem \ref{accessible-bounded}.  

\begin{proof}
Let $\widetilde{H}$ be the preimage of $\mathcal{H}$ in $\mathrm{Rat}_4$. Suppose $[N]\in \partial\mathcal{H}$ is tame. By the definition of tame, there exists $N_{t_k}\in\widetilde{H}$ for some holomorphic family $N_t$ of Newton maps. By changing this parameterization by replacing $t$ with some power $t^l$ to account for the fact that the roots are not functions of $t$ a priori, we may find a degenerating holomorphic family $N_t=N_{\{a(t),b(t),c(t),d(t)\}} \to N:=H_N\hat{N}$ such that $N_{t_k} \in \widetilde{H}$ for some sequence $t_k \to 0$. 

By Lemma \ref{types}, by replacing $N_t$ with an affine conjugate family $M_t \circ N_t \circ M_t^{-1}$, we may and do assume the induced map $\mathbf{N}$ of $\{N_t\}$ is of type $1, 2, 3a,$ or $3b$.  

We first assume the following result, which is an analogue of Proposition \ref{intersection}.
\begin{proposition}\label{accessible-intersection}
Let $N_t, N_{t_k}, N$ be as above. Let $\langle z_k\rangle$ be the non-fixed attracting cycle of $N_{t_k}$ and let $\Gamma$ be the limit of $\langle z_k\rangle$. Then $\Gamma\cap\mathrm{Hole}(N)\not=\emptyset$.
\end{proposition}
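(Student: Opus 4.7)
The plan is to argue by contradiction, assuming $\Gamma \cap \mathrm{Hole}(N) = \emptyset$. By Lemma \ref{types}, after an affine conjugation we may assume the induced map $\mathbf{N}$ is of type 1, 2, 3a, or 3b; in every case, the attracting fixed points of $\widehat{N}$ coincide with $\mathrm{Hole}(N)$ (cf.\ Table \ref{table invariants}). Uniform convergence of $N_{t_k}$ to $\widehat{N}$ on a compact neighborhood of $\Gamma$ disjoint from the holes shows at once that $\Gamma$ is a non-repelling cycle of $\widehat{N}$ lying in $\mathbb{P}^1 \setminus \mathrm{Hole}(N)$.

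The main idea is a basin-convergence claim: for each hole $a \in \mathrm{Hole}(N)$, the combined $N_{t_k}$-basin of the superattracting fixed points of $N_{t_k}$ which collide at $a$ as $t_k \to 0$ eventually contains every compact subset of the $\widehat{N}$-basin of $a$ that is disjoint from $\mathrm{Hole}(N)$. This is established by a Carath\'eodory-type convergence argument for Fatou components, using the uniform convergence of $N_{t_k}$ to $\widehat{N}$ off the holes together with the observation that the immediate basins of the merging superattracting fixed points accumulate on the immediate basin of the attracting fixed point of $\widehat{N}$. Invoking Proposition \ref{Berk-dyn}, one extracts a free critical point $\mathbf{c}_i$ whose reduction has $\widehat{N}$-orbit attracted to some hole $a$; by the basin-convergence claim, for large $k$ the point $c_i(t_k)$ belongs to the $N_{t_k}$-basin of the superattracting fixed points clustering at $a$, and hence is attracted to one of these fixed points rather than to $\langle z_k \rangle$.

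This contradicts the hypothesis that both additional critical points of $N_{t_k}$ lie in the grand-orbit basin of the non-fixed attracting cycle $\langle z_k \rangle$: since $\langle z_k \rangle$ has period at least two, it is disjoint from the superattracting fixed points of $N_{t_k}$, and so are their basins. The main technical obstacle is establishing the basin-convergence claim near the holes, where uniform convergence of $N_{t_k}$ to $\widehat{N}$ fails; to handle this, one exploits the detailed Berkovich structure of $\mathbf{N}$ developed in Section \ref{Berk} to control the degeneration of the Fatou components of $N_{t_k}$ surrounding the colliding superattracting fixed points, analogously to how Proposition \ref{Fatou-size} was used in the proof of the Main Lemma.
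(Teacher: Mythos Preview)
Your approach differs substantially from the paper's, and the central step --- the ``basin-convergence claim'' --- is not established.  You assert that for each hole $a$, the union of the $N_{t_k}$-basins of the superattracting fixed points colliding at $a$ eventually swallows every compact subset of the $\widehat{N}$-basin of $a$.  You correctly identify that the difficulty lies near the holes, where uniform convergence fails, but your proposed remedy (``Carath\'eodory-type convergence'' together with ``the detailed Berkovich structure \ldots analogously to how Proposition~\ref{Fatou-size} was used'') is not a proof.  Proposition~\ref{Fatou-size} goes in the opposite direction: it shows certain Fatou components \emph{shrink}, not that basins of fixed points \emph{grow}.  Concretely, once the $N_{t_k}$-orbit of a point enters a small disk around the hole, you must rule out that it lingers near the Julia set of the rescaled limit map, or escapes the disk and wanders elsewhere; the Berkovich picture tells you what the rescaled limit is, but does not by itself control individual complex orbits at the transitional scale.

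The paper avoids this obstacle entirely by proving the dual statement.  First, the FSI immediately dispatches types~2, 3a, 3b: in each of these $\widehat{N}$ is quadratic with enough (super)attracting fixed points that no further nonrepelling cycle can exist.  For the remaining type~1 case, rather than showing the superattracting basins absorb a neighborhood $V$ of the hole, the paper shows the immediate basins $\Omega_k^{(i)}$ of the \emph{free} cycle stay disjoint from such a $V$.  This is done by a first-entry trapping argument: one chooses $\mathcal{D} \supset V$ with $\widehat{N}^{n_1}(\mathcal{D}) \subset V$ away from a thin collar $\mathcal{B}$ of $\partial\mathcal{D}$, and observes that if some preimage chain $U_{k,j}^{(i)}$ first met $\mathcal{D}\setminus\mathcal{B}$, applying $N_{t_k}^{n_1}$ would force it into $V \subset \mathcal{D}\setminus\mathcal{B}$, contradicting minimality of $j$.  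This argument lives entirely in the region where $N_{t_k} \to \widehat{N}$ uniformly.  The contradiction then follows because Proposition~\ref{Berk-dyn} guarantees a free critical point $\mathbf{c}_1$ with $\widehat{N}^\ell(\mathrm{red}(\mathbf{c}_1)) \to 0$, so some finite iterate $N_{t_k}^{n_3}(c_1(t_k))$ lands in $V$ while simultaneously lying in some $\Omega_k^{(i_0)}$.

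In short: your outline could perhaps be completed, but the hard step you flag is genuinely hard and you have not done it; the paper sidesteps it by proving the complementary (and easier) inclusion.
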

Assuming Proposition \ref{accessible-intersection}, we continue with the proof. Let $n_1$ be the period of the non-fixed attracting cycle and let $n_2$ be the smallest nonnegative integer such that the $n_2$-th forward images of both two additional critical points $c_1(t_k)$ and $c_2(t_k)$ lie in the immediate basin of the non-fixed attracting cycle. Set $n=n_1+n_2$. If the induced map $\mathbf{N}$ is of type $1, 2,$ or $ 3a$, replace $\Gamma_1$ by $\Gamma$, the conclusions $(1)-(3)$ of Proposition \ref{cycle123} still hold. Then considering $M_t^{-1}\circ N_t\circ M_t$ if necessary, by Proposition \ref{Fatou-size}, we know there exist $0\le q_1\le n-1$ and $0\le q_2\le n-1$ such that $N_{t_k}^{q_1}(c_1(t_k))$ converges to $0$ and $N_{t_k}^{q_2}(c_2(t_k))$ converges to the holes of $N$. It follows that for the induced map $\mathbf{N}$, the reductions $\mathrm{red}(\mathbf{N}^{q_1}(\mathbf{c}_1))$ and $\mathrm{red}(\mathbf{N}^{q_2}(\mathbf{c}_2))$ are in $\mathrm{Hole}(N)$. It contradicts to Proposition \ref{Berk-dyn}. Thus $\mathbf{N}$ is of type 3b. Again, by Propositions \ref{Fatou-size} and \ref{Berk-dyn}, we know $\Gamma=\{0\}$. Let $\{L_t(z)\}$ be a family of affine maps such that the induced map $\mathbf{L}(\xi_g)=0\vee\mathbf{r}$ and let $\widetilde{\Gamma}$ be the limit of $\langle L_{t_k}^{-1}(z_k)\rangle$. Then $\widetilde{\Gamma}$ is a nonrepelling cycle of $\mathrm{red}(\mathbf{L}^{-1}\circ\mathbf{N}\circ\mathbf{L})$. Proposition \ref{number} implies that $\widetilde{\Gamma}$ is not parabolic-attracting or parabolic-indifferent. 
\end{proof}
Now we prove Proposition \ref{accessible-intersection}; the proof uses complex analytical estimates. 
\begin{proof}[Proof of Proposition \ref{accessible-intersection}]
Suppose to the contrary that $\Gamma\cap\mathrm{Hole}(N)=\emptyset$ and write $N=H_N\widehat N$. Then $\Gamma$ is a nonrepelling cycle of $\widehat{N}$. The FSI implies the induced map $\mathbf{N}$ is not of type $2,3a$, or $3b$.\par
We may therefore assume that the case that $\mathbf{N}$ is of type $1$. Let $n_1$ be the period of the non-fixed attracting cycle and let $n_2$ be the smallest nonnegative integer such that the $n_2$-th forward images of both two additional critical points $c_1(t_k)$ and $c_2(t_k)$ lie in the immediate basin of the non-fixed attracting cycle. Assume $z_k^{(i)}\to z^{(i)}$ and let $\Omega_k^{(i)}$ be the Fatou component containing $z_k^{(i)}$. \par
We claim that there is a neighborhood $V$ of $0$ such that $\Omega_k^{(i)}\cap V=\emptyset$ for sufficiently large $k$ and all $0\le i\le n_1-1$. Indeed, consider a small neighborhood $V$ of $0$ in the immediate basin of the fixed point $0$ for the map $\widehat{N}$. We can choose $V$ small enough such that $z^{(i)}\not\in\mathcal{D}$ for all $0\le i\le n_1-1$, where $\mathcal{D}$ is the component of $\widehat{N}^{-n_1}(V)$ containing $0$. Let $\mathcal{B}$ be a small neighborhood of $\partial{\mathcal{D}}$ such that $V\subset\mathcal{D}\setminus\mathcal{B}$ and $z^{(i)}\not\in\mathcal{B}$ for $0\le i\le n_1-1$. Note $\mathrm{Hole}(N^{n_1})\cap(\mathcal{D}\setminus\mathcal{B})=\{0\}$. Then for sufficiently large $k$, we have $N_{t_k}^{n_1}(\mathcal{D}\setminus\mathcal{B})\subset V$. Let $U_{k,0}^{(i)}$ be a small neighborhood of $z_k^{(i)}$ such that $\overline{N_{t_k}^{n_1}(U_{k,0}^{(i)})}\subset U_{k,0}^{(i)}$ and the boundary $\partial U_{k,0}^{(i)}$ is a simple closed curved containing no iterated forward images of critical points. Let $U_{k,j}^{(i)}$ be the component of $N_{t_k}^{-jn_1}(U_{k,0}^{(i)})$ containing $z_k^{(i)}$. Then $U_{k,j}^{(i)}\subset U_{k,j+1}^{(i)}$ and $\Omega_k^{(i)}=\cup_{j\ge 0} U_{k,j}^{(i)}$. If there is $j\ge 1$ such that $U_{k,j}^{(i)}\cap(\mathcal{D}\setminus\mathcal{B})\not=\emptyset$, let $j_0$ be the smallest such $j$. Then we have $N_{t_k}^{n_1}(U_{k,j_0}^{(i)}\cap(\mathcal{D}\setminus\mathcal{B}))\subset N_{t_k}^{n_1}(U_{k,j_0}^{(i)})=U_{k,j_0-1}^{(i)}\subset\widehat{\mathbb{C}}\setminus(\mathcal{D}\setminus\mathcal{B})$ and hence $N_{t_k}^{n_1}(U_{k,j_0}^{(i)}\cap\mathcal{D}\setminus\mathcal{B})\subset\widehat{\mathbb{C}}\setminus V$. On the other hand, we have $N_{t_k}^{n_1}(U_{k,j_0}^{(i)}\cap(\mathcal{D}\setminus\mathcal{B}))\subset N_{t_k}^{n_1}(\mathcal{D}\setminus\mathcal{B})\subset V$. It is a contradiction. So $U_{k,j}^{(i)}\cap(\mathcal{D}\setminus\mathcal{B})=\emptyset$ for all $j\ge 0$.  It follows that $\Omega_k^{(i)}\cap(\mathcal{D}\setminus\mathcal{B})=\emptyset$. Therefore, $\Omega_k^{(i)}\cap V=\emptyset$.\par
By Proposition \ref{Berk-dyn}, there exists at least one critical point, say $\mathbf{c}_1$, of the induced map $\mathbf{N}$ such that $\mathrm{red}(\mathbf{N}^\ell(\mathbf{c}_1))\to 0$ as $\ell\to\infty$. Then there exists $n_3\ge n_2$ such that $N_{t_k}^{n_3}(c_1(t_k))\in V$ for sufficiently large $k$. However, $N_{t_k}^{n_3}(c_1(t_k))\in\Omega_{k}^{(i_0)}$ for some $0\le i_0\le n_1-1$. It is impossible since $\Omega_{k}^{(i_0)}\cap V=\emptyset$ for sufficiently large $k$. Therefore, we have $\Gamma\cap\mathrm{Hole}(N)\not=\emptyset$. 
\end{proof}

\section{Immediate escape components are unbounded}\label{qc}
In this section, we prove Theorem \ref{escape-unbounded}, which asserts that the immediate escape regions in $\mathrm{nm}_d$ are unbounded.
\begin{proof}[Proof of Theorem \ref{escape-unbounded}]
Suppose $f$ is a hyperbolic Newton map for which some additional critical point lies in the immediate basin $\Omega$ of some root; we may assume this root is the origin.  Applying the quasiconformal surgery arguments of McMullen  \cite{McMullen88} to the backward (equivalently, grand) orbit of the basin $\Omega$, we may alter $f$ within $\Omega$ to assume that (i) each additional critical point $c'$ of $f$ lying in some preimage $\Omega' \neq \Omega$ is in the backward orbit of the origin, (ii) each critical point not in the grand orbit of $\Omega$ has finite forward orbit, and (iii) the dynamics of $f$ on the basin $\Omega$ is holomorphically conjugate to the Blaschke product 
\[ B_a(w)=-w^k\frac{w-a}{1-\overline{a}w}, 0<|a|<1, k \geq 2.\]
Letting $a \uparrow 1$ along the reals then uniquely determines a real-analytic one-parameter family $a \mapsto f_a$ of Newton maps in $\mathcal{H}$.  As $a \uparrow 1$, direct calculations show (i) the unique non-fixed critical point $x_a$ of $B_a$ tends to $1$; (ii) the ``ray segment'' $[0, x_a]$ is forward-invariant under $B_a$; (iii) the hyperbolic distance $d(x_a, B_a(x_a))$ remains bounded; (iv) $B_a$ degenerates, converging locally uniformly on the complement of the point $w=1$ to the lower degree map $w \mapsto w^k$. 
We will show, indirectly, that the family $f_a$ tends to infinity in $\mathcal{H}$ as $a \uparrow 1$.  \par
Suppose otherwise. Then for some sequence $a_n \uparrow 1$, the family $f_n:=f_{a_n}$ converges uniformly to some limiting Newton map $f$. The argument below is a special case of a general method used to show convergence of certain ``rational angle'' (in this case, the angle is $0/1$) parameter rays; see \cite{Petersen00}. Our setting is combinatorially simpler, and we use the language of Carath\'eodory convergence; see \cite{McMullen94}.\par
Let $\Omega_n$ denote the immediate basin of the origin for $f_n$, and $c_n \neq 0$ the other critical point of $f_n$ in $\Omega_n$; in the Blaschke product model, this critical point is on the positive real axis, between $0$ and $a$.  Since $f_n \to f$, 
(i) the origin is again a superattracting fixed-point of $f$ of local degree $k$; let $\Omega$ denote its immediate basin, and (ii) the critical points $c_n$ converge to a (necessarily simple) critical point $c$ of $f$.\par
Property (i) of the previous paragraph implies the pointed basins $(\Omega_n, 0) \to (\Omega, 0)$ converge in the Carath\'eodory topology. Indeed, for any compact subset $K\subset U$, there exists a disk $B(0,r)$ such that $K\subset B(0,r)\subset\Omega\cap\Omega_n$ for sufficiently large $n$, and for any open connected subset $U\subset\Omega_n$ containing $0$ for infinitely many $n$, there are no repelling periodic points  of $f_n$ in $U$,  which implies $U\subset\Omega$.  Since the hyperbolic distance in $\Omega_n$ between $c_n$ and $0$ tends to infinity, $c \not\in \Omega$. Consider the pointed basins $(\Omega_n, c_n)$.  There are now two cases to consider. If no Carath\'eodory limit point exists, then the Euclidean distance between $c_n$ and $\partial \Omega_n$ tends to zero \cite[Theorem 5.2]{McMullen94}. Since the hyperbolic distance in $\Omega_n$ between $c_n$ and $f_n(c_n)$ is uniformly bounded and the hyperbolic distance is comparable to the ``$1/d$''-metric \cite[Theorem 2.3]{McMullen94}, we conclude $|c_n-f_n(c_n)| \to 0$, hence that $f(c)=c \neq 0$, which is impossible since $f_n^k(c_n) \to 0$ as $k \to \infty$.  We condlude there is a limit point $(\Omega', c)$ of $(\Omega_n, c_n)$. Since the hyperbolic distance in $\Omega_n$ between $c_n$ and $f_n(c_n)$ is uniformly bounded, the domain $\Omega'$ is a fixed simply-connected Fatou component of $f$ distinct from $\Omega$ mapping to itself by degree two. It therefore cannot be a Siegel disk or Hermann ring. It cannot be an attracting or superattracting basin since these are stable under perturbation. We conclude as in \cite{Petersen00} that $\Omega$ is an immediate basin of a parabolic fixed-point for $f$.\par
This, however, is impossible. Since $f$ is a Newton map, the point at infinity is the unique non-critical fixed-point: by the Holomorphic Fixed-Point Index Formula \cite[Theorem 12.4]{Milnor06B} it is repelling with multiplier $d/(d-1)$.
\end{proof}

\bibliographystyle{siam}
\bibliography{references}

\begin{thebibliography}{10}

\bibitem{Baker10}
{\sc M.~Baker and R.~Rumely}, {\em Potential theory and dynamics on the
  {B}erkovich projective line}, vol.~159 of Mathematical Surveys and
  Monographs, American Mathematical Society, Providence, RI, 2010.

\bibitem{Bochnak98}
{\sc J.~Bochnak, M.~Coste, and M.-F. Roy}, {\em Real algebraic geometry},
  vol.~36 of Ergebnisse der Mathematik und ihrer Grenzgebiete (3) [Results in
  Mathematics and Related Areas (3)], Springer-Verlag, Berlin, 1998.
\newblock Translated from the 1987 French original, Revised by the authors.

\bibitem{Bonifant18}
{\sc A.~Bonifant, X.~Buff, and J.~Milnor}, {\em Antipode preserving cubic maps:
  the {F}jord theorem}, Proc. Lond. Math. Soc. (3), 116 (2018), pp.~670--728.

\bibitem{Bonifant10}
{\sc A.~Bonifant, J.~Kiwi, and J.~Milnor}, {\em Cubic polynomial maps with
  periodic critical orbit. {II}. {E}scape regions}, Conform. Geom. Dyn., 14
  (2010), pp.~68--112.

\bibitem{Cayley79}
{\sc P.~Cayley}, {\em Desiderata and {S}uggestions: {N}o. 3.{T}he
  {N}ewton-{F}ourier {I}maginary {P}roblem}, Amer. J. Math., 2 (1879), p.~97.

\bibitem{DeMarco05}
{\sc L.~DeMarco}, {\em Iteration at the boundary of the space of rational
  maps}, Duke Math. J., 130 (2005), pp.~169--197.

\bibitem{DeMarco14}
{\sc L.~DeMarco and X.~Faber}, {\em Degenerations of complex dynamical
  systems}, Forum Math. Sigma, 2 (2014), pp.~e6, 36.

\bibitem{DeMarco16}
\leavevmode\vrule height 2pt depth -1.6pt width 23pt, {\em Degenerations of
  complex dynamical systems {II}: analytic and algebraic stability}, Math.
  Ann., 365 (2016), pp.~1669--1699.
\newblock With an appendix by Jan Kiwi.

\bibitem{Denkowska15}
{\sc Z.~Denkowska and M.~P. Denkowski}, {\em A long and winding road to
  definable sets}, J. Singul., 13 (2015), pp.~57--86.

\bibitem{Douady84}
{\sc A.~Douady and J.~H. Hubbard}, {\em \'Etude dynamique des polyn\^omes
  complexes. {P}artie {I,II}}, vol.~85 of Publications Math\'ematiques d'Orsay
  [Mathematical Publications of Orsay], Universit\'e de Paris-Sud,
  D\'epartement de Math\'ematiques, Orsay, 1984/1985.
\newblock With the collaboration of P. Lavaurs, Tan Lei and P. Sentenac.

\bibitem{Epstein99}
{\sc A.~{Epstein}}, {\em {Infinitesimal Thurston Rigidity and the
  Fatou-Shishikura Inequality}}, arXiv:math/9902158,  (1999).

\bibitem{Epstein00}
{\sc A.~L. Epstein}, {\em Bounded hyperbolic components of quadratic rational
  maps}, Ergodic Theory Dynam. Systems, 20 (2000), pp.~727--748.

\bibitem{Faber13I}
{\sc X.~Faber}, {\em Topology and geometry of the {B}erkovich ramification
  locus for rational functions, {I}}, Manuscripta Math., 142 (2013),
  pp.~439--474.

\bibitem{FractalStream}
{\sc FractalStream}, {\em http://www.math.cornell.edu/~noonan/fstream.html}.

\bibitem{Haissinsky04}
{\sc P.~Ha\"\i~ssinsky and L.~Tan}, {\em Convergence of pinching deformations
  and matings of geometrically finite polynomials}, Fund. Math., 181 (2004),
  pp.~143--188.

\bibitem{Kameyama03}
{\sc A.~Kameyama}, {\em On {J}ulia sets of postcritically finite branched
  coverings. {I}. {C}oding of {J}ulia sets}, J. Math. Soc. Japan, 55 (2003),
  pp.~439--454.

\bibitem{Kiwi06}
{\sc J.~Kiwi}, {\em Puiseux series polynomial dynamics and iteration of complex
  cubic polynomials}, Ann. Inst. Fourier (Grenoble), 56 (2006), pp.~1337--1404.

\bibitem{Kiwi14}
\leavevmode\vrule height 2pt depth -1.6pt width 23pt, {\em Puiseux series
  dynamics of quadratic rational maps}, Israel J. Math., 201 (2014),
  pp.~631--700.

\bibitem{Kiwi15}
\leavevmode\vrule height 2pt depth -1.6pt width 23pt, {\em Rescaling limits of
  complex rational maps}, Duke Math. J., 164 (2015), pp.~1437--1470.

\bibitem{Lojasiewicz95}
{\sc S.~a. \L~ojasiewicz}, {\em On semi-analytic and subanalytic geometry}, in
  Panoramas of mathematics ({W}arsaw, 1992/1994), vol.~34 of Banach Center
  Publ., Polish Acad. Sci. Inst. Math., Warsaw, 1995, pp.~89--104.

\bibitem{Lodge15b}
{\sc R.~{Lodge}, Y.~{Mikulich}, and D.~{Schleicher}}, {\em {A classification of
  postcritically finite Newton maps}}, arXiv:1510.02771,  (2015).

\bibitem{Lodge15a}
\leavevmode\vrule height 2pt depth -1.6pt width 23pt, {\em {Combinatorial
  properties of Newton maps}}, arXiv:1510.02761,  (2015).

\bibitem{Lyubich00}
{\sc M.~Lyubich}, {\em The quadratic family as a qualitatively solvable model
  of chaos}, Notices Amer. Math. Soc., 47 (2000), pp.~1042--1052.

\bibitem{Makienko00}
{\sc P.~M. Makienko}, {\em Unbounded components in parameter space of rational
  maps}, Conform. Geom. Dyn., 4 (2000), pp.~1--21.

\bibitem{McMullen88}
{\sc C.~McMullen}, {\em Automorphisms of rational maps}, in Holomorphic
  functions and moduli, {V}ol.\ {I} ({B}erkeley, {CA}, 1986), vol.~10 of Math.
  Sci. Res. Inst. Publ., Springer, New York, 1988, pp.~31--60.

\bibitem{McMullen94c}
{\sc C.~T. McMullen}, {\em The classification of conformal dynamical systems},
  in Current developments in mathematics, 1995 ({C}ambridge, {MA}), Int. Press,
  Cambridge, MA, 1994, pp.~323--360.

\bibitem{McMullen94}
\leavevmode\vrule height 2pt depth -1.6pt width 23pt, {\em Complex dynamics and
  renormalization}, vol.~135 of Annals of Mathematics Studies, Princeton
  University Press, Princeton, NJ, 1994.

\bibitem{Milnor86}
{\sc J.~Milnor}, {\em Self-similarity and hairiness in the {M}andelbrot set},
  in Computers in geometry and topology ({C}hicago, {IL}, 1986), vol.~114 of
  Lecture Notes in Pure and Appl. Math., Dekker, New York, 1989, pp.~211--257.

\bibitem{Milnor93}
\leavevmode\vrule height 2pt depth -1.6pt width 23pt, {\em Geometry and
  dynamics of quadratic rational maps}, Experiment. Math., 2 (1993),
  pp.~37--83.
\newblock With an appendix by the author and Lei Tan.

\bibitem{Milnor06B}
\leavevmode\vrule height 2pt depth -1.6pt width 23pt, {\em Dynamics in one
  complex variable}, vol.~160 of Annals of Mathematics Studies, Princeton
  University Press, Princeton, NJ, third~ed., 2006.

\bibitem{Milnor09}
\leavevmode\vrule height 2pt depth -1.6pt width 23pt, {\em Cubic polynomial
  maps with periodic critical orbit. {I}}, in Complex dynamics, A K Peters,
  Wellesley, MA, 2009, pp.~333--411.

\bibitem{Milnor12}
\leavevmode\vrule height 2pt depth -1.6pt width 23pt, {\em Hyperbolic
  components}, in Conformal dynamics and hyperbolic geometry, vol.~573 of
  Contemp. Math., Amer. Math. Soc., Providence, RI, 2012, pp.~183--232.
\newblock With an appendix by A. Poirier.

\bibitem{Milnor14}
\leavevmode\vrule height 2pt depth -1.6pt width 23pt, {\em Hyperbolic component
  boundaries}, http://www.math.stonybrook.edu/~jack/HCBkoreaPrint.pdf,  (Sept.
  06, 2018).

\bibitem{Nie-thesis}
{\sc H.~{Nie}}, {\em {Iteration at the Boundary of Newton Maps}},
  arXiv:1803.08032,  (2018).

\bibitem{Petersen99}
{\sc C.~L. Petersen}, {\em No elliptic limits for quadratic maps}, Ergodic
  Theory Dynam. Systems, 19 (1999), pp.~127--141.

\bibitem{Petersen00}
{\sc C.~L. Petersen and G.~Ryd}, {\em Convergence of rational rays in parameter
  spaces}, in The {M}andelbrot set, theme and variations, vol.~274 of London
  Math. Soc. Lecture Note Ser., Cambridge Univ. Press, Cambridge, 2000,
  pp.~161--172.

\bibitem{Pilgrim94}
{\sc K.~M. Pilgrim}, {\em Cylinders for iterated rational maps}, ProQuest LLC,
  Ann Arbor, MI, 1994.
\newblock Thesis (Ph.D.)--University of California, Berkeley.

\bibitem{Rees90}
{\sc M.~Rees}, {\em Components of degree two hyperbolic rational maps}, Invent.
  Math., 100 (1990), pp.~357--382.

\bibitem{Roesch07}
{\sc P.~Roesch}, {\em Hyperbolic components of polynomials with a fixed
  critical point of maximal order}, Ann. Sci. \'Ecole Norm. Sup. (4), 40
  (2007), pp.~901--949.

\bibitem{Roesch15}
{\sc P.~Roesch, X.~Wang, and Y.~Yin}, {\em Moduli space of cubic {N}ewton
  maps}, Adv. Math., 322 (2017), pp.~1--59.

\bibitem{Shishikura87}
{\sc M.~Shishikura}, {\em On the quasiconformal surgery of rational functions},
  Ann. Sci. \'Ecole Norm. Sup. (4), 20 (1987), pp.~1--29.

\bibitem{Silverman98}
{\sc J.~H. Silverman}, {\em The space of rational maps on {$\bold P^1$}}, Duke
  Math. J., 94 (1998), pp.~41--77.

\bibitem{Tan02}
{\sc L.~Tan}, {\em On pinching deformations of rational maps}, Ann. Sci.
  \'Ecole Norm. Sup. (4), 35 (2002), pp.~353--370.

\bibitem{Wang17}
{\sc X.~{Wang}}, {\em {Hyperbolic components and cubic polynomials}},
  arXiv:1710.03955,  (2017).

\end{thebibliography}
\end{document}